\newcommand{\ZZ}{\mathbb{Z}}
\newcommand{\RR}{\mathbb{R}}
\newcommand{\st}{\textup{s.t.}}
\newcommand{\conv}{\textup{conv}}
\newcommand{\ap}{\textup{AP}}
\newcommand{\mtz}{\textup{MTZ}}
\newcommand{\dl}{\textup{DL}}
\newcommand{\scf}{\textup{SCF}}
\newcommand{\proj}{\textup{proj}}
\newcommand{\ones}{{\bf 1}}
\newcommand{\Cl}{\textup{{\bf Cl}}}
\newcommand{\Ef}{\textup{{\bf Ef}}}
\newcommand{\tq}{{\,:\,}}
\newcommand{\interior}{{\textup{int}}}
\newcommand{\PP}{{\mathcal{P}}}
\newcommand{\GA}[1]{{\color{blue} [GA] #1}}
\newcommand{\revcycle}[1]{{#1^\textup{R}}}
\newcounter{enucount}
\newtheorem{proposition}[enucount]{Proposition}
\newtheorem{theorem}[enucount]{Theorem}
\newtheorem{lemma}[enucount]{Lemma}
\newtheorem{corollary}[enucount]{Corollary}
\newtheorem{observation}[enucount]{Observation}
\theoremstyle{definition}
\newtheorem{definition}{Definition}
\title{On parametric formulations for the Asymmetric Traveling Salesman Problem}
\author{Gustavo Angulo\thanks{gangulo@uc.cl, Department of Industrial and Systems Engineering, Pontificia Universidad Católica de Chile, Santiago, Chile 7820436.}  \and
Diego A. Mor{\'{a}}n R.\thanks{morand@rpi.edu, Industrial and Systems Engineering Department,  Rensselaer Polytechnic Institute, Troy, NY USA 12180-3590.}   
} 
\begin{document}
\maketitle


\begin{abstract}
The traveling salesman problem is a widely studied classical combinatorial problem for which there are several integer linear formulations. In this work, we consider the Miller-Tucker-Zemlin (MTZ), Desrochers-Laporte (DL) and Single Commodity Flow (SCF) formulations. We argue that the choice of some parameters of these formulations is arbitrary and, therefore, there are families of formulations of which each of MTZ, DL, and SCF is a particular case. We analyze these families for different choices of the parameters, noting that in general the formulations involved are not comparable to each other and there is no one that dominates the rest. Then we define and study the closure of each family, that is, the set obtained by considering all the associated formulations simultaneously. In particular, we give an explicit integer linear formulation for the closure of each of the families we have defined and then show how they compare to each other.
\end{abstract}

\noindent{\bf Keywords:}
integer programming; linear programming; extended formulations; traveling salesman


\section{Introduction}
Let $G=(N,A)$ be a complete directed graph on $n\geq4$ nodes, where $N=\{1,\ldots,n\}$ and $A=\{ij\tq i,j\in N,\ i\neq j\}$. A Hamiltonian cycle or {\em tour} is a directed cycle in $G$ that begins and ends at the same node and such that each node is visited exactly once. The Asymmetric Traveling Salesman Problem (ATSP) seeks to find a Hamiltonian cycle of minimum cost with respect to a given vector $c\in\RR^A$. It finds a number or applications in logistics, sequencing, scheduling, among others. We refer the reader to \cite{applegate2006traveling} for a thorough coverage of history, applications, and solution approaches.

A generic mixed-integer linear programming formulation for the ATSP using binary variables $x_{ij}$ for $ij\in A$  can be written as
 \begin{align}
\min \quad&\sum_{ij\in A}c_{ij}x_{ij}\label{eq:obj_fun}\\
\text{s.t.}\quad &\sum_{ij\in\delta^+(i)} x_{ij} = 1\quad \textup{for all}\ i\in N\label{AssignPolytope1}\\ 
&\sum_{ji\in\delta^-(i)} x_{ji} = 1\quad \textup{for all}\ i\in N\label{AssignPolytope2}\\
&\{ij\in A\tq x_{ij}=1\}\  \text{does not contain subtours}\label{nosubtours}\\
&x_{ij} \in \{0,1\}\quad \textup{for all}\ ij\in A \label{AssignPolytope3},
\end{align}
where $\delta^+(i)$ is the set arcs originating from node $i$, $\delta^-(i)$ is the set of arcs arriving to node $i$ (see below for a formal definition of these sets of arcs) and a {\em subtour} is a cycle in $G$ that does not cover all nodes in $N$. In the optimization problem above, the objective function to be minimized is given by~\eqref{eq:obj_fun},  constraints \eqref{AssignPolytope1}, \eqref{AssignPolytope2} and \eqref{AssignPolytope3} define an integer linear programming formulation for a restricted version of the {\em Assignment Problem} (equivalent to the face of the assignment polytope given by the equations  $x_{ii}=0$ for all $i\in N$), while requirement \eqref{nosubtours} ensures that any feasible solution represents a single tour. 
Requirement \eqref{nosubtours} can be written using linear constraints on the $x$ variables (a formulation in the original space) or using additional variables and linear constraints (an extended formulation). 

In this paper, we study properties  of {\em parametric} integer programming formulations for the ATSP that are based on three classic formulations from the literature: the Miller-Tucker-Zemlin (MTZ), the Desrochers–Laporte (DL), and the Single-Commodity Flow (SCF) formulations. Before stating our results, we give some notation, precisely describe the MTZ, DL, and SCF formulations, and introduce the parametric formulations proposed in this paper: the $d$-MTZ, $d$-DL and $b$-SCF formulations. 

\paragraph{Notation:} For $S\subseteq N$, let $\delta^+(S)=\{ij\in A\tq i\in S,\ j\in N\setminus S\}$, $\delta^-(S)=\{ij\in A\tq i\in N\setminus S,\ j\in S\}$, and $A(S)=\{ij\in A\tq i\in S,\ j\in S\}$. If $S=\{i\}$, we write $\delta^+(i)$ and $\delta^-(i)$, respectively. Denote $N_1=N\setminus\{1\}=\{2,\ldots,n\}$ and $A_1=\{ij\tq i,j\in N_1,\ i\neq j\}$. 
Let $\mathcal C_1$ be the set of directed cycles with arcs in $A_1$ 
and let $\mathcal S_1$ be the set of subsets of $N_1$ of size at least 2. For a cycle $C$, its reversed cycle will be denoted $\revcycle{C}=\{ji\tq ij\in C\}$.

Given a set $Q=\{(x,u)\in\RR^{n_1}\times\RR^{n_2}\tq Ax+Bu\leq b\}$, its projection onto the $x$ variables is denoted as $\proj_x(Q)=\{x\in\RR^{n_1}\tq \exists u\in\RR^{n_2}\ \st\ (x,u)\in Q\}$.

The vector with all its components equal to one will be denoted $\ones$; its dimension must be understood by the context in which the notation is used. In addition, given a set of indices $K$, $\ones_K$ denotes a binary vector whose nonzero components are the ones indexed by $K$. Finally, $\RR_+$ and $\RR_{++}$ stand for the nonnegative and positive real numbers, respectively.

\subsection{Classic integer programming formulations for the ATSP}
Below we present some of the most commonly used integer programming formulations for the ATSP. We refer the reader to \cite{oncan2009comparative} for a thorough survey on different formulations for the ATSP and relations among them. In all formulations that follow, for modeling purposes and without loss of generality, we consider node 1 as a special node.

The Dantzig-Fulkerson-Johnson (DFJ) formulation \cite{dantzig1954solution} writes \eqref{nosubtours} in terms of the variables $x_{ij}$ for $ij\in A$ by using the exponentially many constraints known as {\em Clique inequalities}:
\begin{equation}\label{eq:DFJ_ineq}
\sum_{ij\in A(S)}x_{ij}\leq |S|-1\quad\textup{for all}\  S\in\mathcal S_1.
\end{equation}
The above inequalities can be replaced by the {\em Cut inequalities}:
\begin{equation*}\label{eq:DFJ_cut}
\sum_{ij\in\delta^+(S)}x_{ij}\geq 1\quad\textup{for all}\  S\in\mathcal S_1,
\end{equation*}
obtaining an equivalent formulation.

The Miller-Tucker-Zemlin (MTZ) formulation \cite{miller1960integer}, a compact extended formulation,  uses additional continuous variables $u_2,\ldots,u_n\in\RR$ and the quadratically many inequalities
\begin{equation}\label{eq:MTZ_ineq}
   u_i-u_j+(n-1)x_{ij}\leq n-2\quad\textup{for all}\ ij\in A_1.
\end{equation}
Variable $u_i$ for $i\in N_1$ can be understood as the relative position of node $i$ in the tour. Note that \eqref{eq:MTZ_ineq} for $ij\in A_1$ implies that if $x_{ij}=1$, then $u_i+1\leq u_j$.

The Desrochers–Laporte (DL) formulation \cite{desrochers1991improvements} strengthens the MTZ formulation by lifting variable $x_{ji}$ into \eqref{eq:MTZ_ineq}, yielding
\begin{equation}\label{eq:DL_ineq}
   u_i-u_j+(n-1)x_{ij}+(n-3)x_{ji}\leq n-2\quad\textup{for all}\ ij\in A_1.
\end{equation}
Note that \eqref{eq:DL_ineq} for $ij$ and $ji\in A_1$ imply that if $x_{ij}=1$, then $u_i+1= u_j$.

In \cite{gouveia1999asymmetric}, the $u$ variables in the MTZ and DL formulations are disaggregated in terms of binary variables that indicate precedence relations of nodes in the tour. The new variables $v_{ki}$ for $k,i\in N_1$ can be understood as indicating whether node $k$ precedes node $i$. The disaggregation of the MTZ formulation, termed RMTZ in \cite{gouveia1999asymmetric}, is given by
\begin{align}
x_{ij}+v_{ki}-v_{kj}\leq 1\quad &\textup{for all}\  ij\in A_1,\ k\in N_1\tq k\neq i,\ k\neq j\label{first_constraint_GP}\\
x_{ij}-v_{ij}\leq 0 \quad&\textup{for all}\  ij\in A_1\nonumber\\
x_{ij}+v_{ji}\leq 1 \quad &\textup{for all}\ ij\in A_1.\nonumber
\end{align}
Moreover, lifting constraint~\eqref{first_constraint_GP} to
\begin{equation}
    x_{ij}+x_{ji}+v_{ki}-v_{kj}\leq 1\nonumber
\end{equation}
yields a disaggregation of the DL formulation, termed L1RMTZ in \cite{gouveia1999asymmetric}.

The Single-Commodity Flow formulation (SCF) \cite{gavish1978travelling} includes additional flow variables $f_{ij}\in\RR_+$ along the constraints
\begin{eqnarray}
\sum_{ij\in\delta^+(i)}f_{ij}-\sum_{ji\in\delta^-(i)}f_{ji}=\begin{cases}n-1& i=1\\-1& i\in N_1\end{cases} \label{eq:scf_balance}\\
f_{ij}\leq (n-1)x_{ij}\quad \textup{for all}\ ij\in A. \label{eq:scf_bound}
\end{eqnarray}

Similarly to the MTZ and DL formulations, the SCF formulation can be disaggregated. The Multi-Commodity Flow formulation (MCF) \cite{wong1980integer} includes additional flow variables $f^k_{ij}\in\RR_+$ for each $k\in N_1$ and $ij\in A$ along the constraints
\begin{eqnarray}
\sum_{ij\in\delta^+(i)}f^k_{ij}-\sum_{ji\in\delta^-(i)}f^k_{ji}=\begin{cases}1& i=1\\-1&i=k\\0& i\in N_1,\ i\neq k\end{cases}\quad \text{\ for all}\ k\in N_1 \nonumber\\ 
f^k_{ij}\leq x_{ij}\quad \textup{for all}\ k\in N_1,\ ij\in A. \nonumber
\end{eqnarray}

\subsection{New parametric integer programming formulations for the ATSP}\label{section:our_formulations}

 We define new formulations for the ATSP  as follows: for each of the classic MTZ, DL, and SCF formulations, we keep the assignment polytope constraints unchanged and we replace some of the numbers appearing in the remaining constraints by appropriate parameters $d\in\RR^{A_1}_{++}$ or  $b\in\RR_{++}^{N_1}$, depending on the classic formulation being considered. 

The $d$-MTZ formulation is defined by replacing constraints~\eqref{eq:MTZ_ineq} in the MTZ formulation by
\begin{equation}\label{eq:genMTZ_ineq}
    u_i-u_j+d_{ij}\leq 1-x_{ij}\quad ij\in A_1.
\end{equation}

The $d$-DL formulation is given by the replacing the constraints~\eqref{eq:DL_ineq} in the DL formulation by
\begin{equation}\label{eq:genDL_ineq}
    u_i-u_j+x_{ij}+(1-d_{ij}-d_{ji})x_{ji}\leq 1-d_{ij}\quad \textup{for all}\ ij\in A_1.
\end{equation}

The $b$-SCF formulation is obtained by replacing  the constraints~\eqref{eq:scf_balance}-\eqref{eq:scf_bound} in the SCF formulation by
\begin{eqnarray}\label{eq:genSCF_ineq}
\sum_{ij\in\delta^+(i)}f_{ij}-\sum_{ji\in\delta^-(i)}f_{ji}=\begin{cases}1& i=1\\-b_i& i\in N_1\end{cases}\label{eq:genSCF1}\\
f_{ij}\leq  x_{ij}\quad \textup{for all}\ ij\in A\label{eq:genSCF2}.
\end{eqnarray}

We remark here that in order for these to be valid formulations for the ATSP, the parameters $d\in\RR^{A_1}_{++}$ and $b\in\RR_{++}^{N_1}$ must be chosen appropriately. We define
$$D=\left\{d\in\RR^{A_1}_{++}\tq \sum_{ij\in C}d_{ij}\leq 1\ \forall C\in\mathcal C_1\right\}$$
and
$$B=\left\{b\in\RR^{N_1}_{++}\tq \sum_{i\in N_1}b_i=1\right\}.$$
In the sections corresponding to each of the proposed formulations, we argue that \eqref{eq:genMTZ_ineq} or \eqref{eq:genDL_ineq} with $d\in D$, and \eqref{eq:genSCF_ineq} with $b\in B$ provide valid formulations for the ATSP.

\paragraph{Relevant polyhedral sets:}
Each formulation that we have presented has a polyhedron associated to it which is obtained by removing all integrality constraints from the corresponding formulation. 
We denote the polyhedron associated to the (restricted) Assignment Problem as $P_\ap=\{x\in[0,1]^A\tq  x\ \text{satisfies \eqref{AssignPolytope1} and \eqref{AssignPolytope2}}\}$.
Similarly, we define the polyhedra associated to the parametric formulations: for the $d$-MTZ formulation we let $Q_{\mtz}(d)=\{(x,u)\in P_\ap\times \RR^{N_1}\tq  (x,u)\ \text{satisfies \eqref{eq:genMTZ_ineq}}\}$, for the DL formulation we let $Q_{\dl}(d)=\{(x,u)\in P_\ap\times \RR^{N_1}\tq  (x,u)\ \text{satisfies \eqref{eq:genDL_ineq}}\}$, and for the SCF formulation we let $Q_{\scf}(b)=\{(x,f)\in P_\ap\times \RR^{A}_+\tq  (x,f)\ \text{satisfies \eqref{eq:genSCF1} and \eqref{eq:genSCF2}}\}$. We also let $P_\mtz(d)=\proj_x(Q_\mtz(d))$, $P_\dl(d)=\proj_x(Q_\dl(d))$ and $P_\scf(b)=\proj_x(Q_\scf(b))$.
\subsection{Our results and organization of the paper}\label{section:our_results}
In this paper, we study the following properties of the formulations defined in the previous section: 

\begin{enumerate}
    \item {\bf Characterization of the projection onto the $x$-variables space.} We study the formulations $P_\mtz(d)$, $P_\dl(d)$, and $P_\scf(b)$ that are obtained by projecting the extended formulations $Q_\mtz(d)$, $Q_\dl(d)$, and $Q_\scf(b)$ onto the space of $x$-variables. In particular, we give a full polyhedral description and characterize their facets.

    \item {\bf Comparing the formulations for different parameters.} Two formulations are comparable if one it is included in the other (the formulation that is included in the other is said to be stronger). We show that in general for $d,d'\in D$, $P_\mtz(d)$ and $P_\mtz(d')$ are not comparable; we also give conditions for which given $d$ we can find a parameter $d'$ that gives a stronger formulation. We also show that for $d,d'\in D$,  under a minor condition, the formulations $P_\dl(d)$ and $P_\dl(d')$ are not comparable. Finally, we show that for $b,b'\in B$, the formulations $P_\scf(d)$ and $P_\scf(d')$ are never comparable.
    \item {\bf Characterizing the closures.} We define the closure of a family of parametric formulations as the set obtained by simultaneously considering the formulations for all values of the parameters. More precisely, for the $d$-$\mtz$ formulations the closure is the set $\bigcap_{d\in D} P_\mtz(d)$, for the $d$-$\dl$ formulations the closure is the set $\bigcap_{d\in D} P_\dl(d)$ and for the $b$-$\scf$ formulations the closure is the set $\bigcap_{b\in B} P_\scf(d)$. We completely characterize all these closures and study some of their properties.
\end{enumerate}

The rest of the paper is organized as follows: in Section~\ref{sec:prelim} we define general parametric formulations, the closure of a family of  parametric formulations and related concepts, and study their properties. In this section we also study properties of the sets of parameters $D$ and $B$, and properties of classic formulations. In Section~\ref{sec:mtz}, Section~\ref{sec:dl} and Section~\ref{sec:scf} we study the $d$-MTZ formulations, $d$-DL formulations, and $b$-SCF formulations, respectively. Finally, in Section~\ref{sec:comp_closures} we compare the closures of the parametric formulations studied in this paper.

 \section{Preliminaries}\label{sec:prelim}

\subsection{General parametric formulations and extended formulations}
 Since in this paper we consider several formulations for the ASTP, it is convenient to study  properties of formulations and extended formulations of sets of integral vectors in more generality. 
\begin{definition}[Formulations]
  A formulation for a set $S\subseteq\ZZ^{n_1}$ is a set $P\subseteq\RR^{n_1}$ such that $S=P\cap\ZZ^{n_1}$. An extended formulation of $S$ is a set $Q\subseteq\RR^{n_1}\times E$, where $E$ is an arbitrary set, such that $S=\proj_{x}(Q)\cap\ZZ^{n_1}$.   
\end{definition}


The parametric (extended) formulations we study are polyhedra, and the parameters can appear in the constraint matrix or in the r.h.s.. The following definition encompasses all the formulations we consider and gives a precise way to compare formulations for different parameters. 

\begin{definition}[Parametric Formulations, Domination]
Let $S\subseteq\ZZ^{n_1}$, $X\subseteq\RR^{n_1}$,  and let $\PP\subseteq (\RR^{m\times n_1}\times \RR^{m\times n_2} \times\RR^m)$ be a set of parameters. Given $(A,G,b)\in \PP$, we consider the set $P(A,G,b)=\{x \in X\tq \exists u\in\RR^{n_2}\ \textup{s.t.}\ A x+Gu\leq b\}$. We say that $P(A,G,b)$ with the set $\PP$ is a parametric formulation for $S$ if $P(A,G,b)\cap\ZZ^{n_1}= S$ for all $(A,G,b)\in \PP$.

For $(A,G,b),(A',G',b')\in \PP$ we say that $P(A,G,b)$ dominates $P(A',G',b')$ if $P(A,G,b)\subseteq P(A',G',b')$; we say that $P(A,G,b)$ and $P(A',G',b')$ are comparable if one of these formulations dominates the other.
\end{definition}

In the context of our paper, we have $X=P_\ap$ (the restricted assignment polytope). For the $d$-$\mtz$ and $d$-$\dl$ formulations (respectively $b$-$\scf$ formulations) we have that the associated set of parameters is $\PP=D$ (respectively $\PP=B$), and the parametric formulations are given by the polyhedra $P_\mtz(d)$ and $P_\dl(d)$ that only depend on the parameter $d\in D$ (respectively the polyhedron $P_\scf(b)$ that only depends on the parameter $b\in B$). 

\begin{observation}\label{obs:Q_different_u}
Let $S\subseteq\ZZ^{n_1}$, $X\subseteq\RR^{n_1}$, and $\PP\subseteq (\RR^{m\times n_1}\times \RR^{m\times n_2} \times\RR^m)$, and consider the parametric formulation given by  $P(A,G,b)$ and $(A,G,b)\in \PP$. Denote $Q(A,G,b)=\{(x,u) \in X\times\RR^{n_2}\tq  A x+Gu\leq b\}$ for $(A,G,b)\in \PP$. Then we can write $P(A,G,b)=\{x \in X\tq \exists u\in\RR^{n_2}\ \textup{s.t.}\ (x,u)\in Q(A,G,b)\}$. Observe that for $(A,G,b), (A',G',b')\in \PP$ we have that $P:=P(A,G,b)\cap P(A',G',b')$ is a formulation for $S$, but in general $Q:=Q(A,G,b)\cap Q(A',G',b')$ does not define a formulation for $S$, as we have $\proj_x(Q)\cap \ZZ^{n_1} \subseteq P\cap \ZZ^{n_1}=S$ and  the inclusion may be strict. However, the set $\hat Q=\{(x,u,u')\in X\times\RR^{n_2}\times \RR^{n_2}\tq  A x+Gu\leq b,\ A'x+G'u'\leq b'\}$ satisfies $\proj_x(\hat Q)=P$ and thus  defines a formulation for $S$ (we show this property for arbitrary intersections in Lemma~\ref{lemma:operators_relation} below).
\end{observation}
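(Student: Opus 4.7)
The plan is to verify each of the four claims in Observation~\ref{obs:Q_different_u} by unpacking definitions; none requires a substantial argument, and the observation is really setting up vocabulary for Lemma~\ref{lemma:operators_relation}.

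First, the identity $P(A,G,b)=\{x\in X\tq \exists u\in\RR^{n_2}\ \st\ (x,u)\in Q(A,G,b)\}$ is just the definition of $P(A,G,b)$ with the quantifier made explicit via the intermediate set $Q(A,G,b)$. Second, to see that $P=P(A,G,b)\cap P(A',G',b')$ is a formulation for $S$, I would use that the intersection of formulations is a formulation: $P\cap\ZZ^{n_1}=(P(A,G,b)\cap\ZZ^{n_1})\cap(P(A',G',b')\cap\ZZ^{n_1})=S\cap S=S$.

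Third, for the claims about $Q=Q(A,G,b)\cap Q(A',G',b')$, I would argue that $\proj_x(Q)\subseteq \proj_x(Q(A,G,b))\cap \proj_x(Q(A',G',b'))=P(A,G,b)\cap P(A',G',b')=P$, hence $\proj_x(Q)\cap\ZZ^{n_1}\subseteq P\cap\ZZ^{n_1}=S$; the reverse inclusion can fail because any $x\in\proj_x(Q)$ must be witnessed by a single $u$ satisfying both systems simultaneously, whereas membership in $P$ only requires each system to be satisfiable by its own $u$. The observation only asserts that the inclusion \emph{may} be strict, so no concrete example is strictly required; one could point to the familiar phenomenon that combining two MTZ-type systems forces the same $u$ variables to certify both precedence relations at once.

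Finally, for $\hat Q$, the key point is that the extra copy of the $u$-variables decouples the two systems: $x\in\proj_x(\hat Q)$ iff there exist $u,u'\in\RR^{n_2}$ with $(x,u)\in Q(A,G,b)$ and $(x,u')\in Q(A',G',b')$, which is equivalent to $x\in P(A,G,b)\cap P(A',G',b')=P$. Hence $\proj_x(\hat Q)=P$, which by the second claim makes $\hat Q$ an extended formulation for $S$. The only mildly subtle step is the one above concerning why $\proj_x(Q)$ can be strictly smaller than $P$; everything else is a routine unwinding of the definitions of projection and intersection.
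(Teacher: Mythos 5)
Your proposal is correct and follows essentially the same route as the paper, which justifies the observation inline by exactly this unwinding of definitions: the intersection of formulations is a formulation, $\proj_x(Q)\subseteq P$ because a common $u$ must witness both systems, and duplicating the $u$-variables in $\hat Q$ decouples the systems so that $\proj_x(\hat Q)=P$ (the last point being generalized in Lemma~\ref{lemma:operators_relation}). No gaps.
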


In addition to understanding when a formulation $P(A,G,b)$ dominates a formulation $P(A',G',b')$ for different parameters $(A,G,b)$ and $(A',G',b')$, we are also interested in understanding the set obtained by considering the formulations $P(A,G,b)$ for all parameters $(A,G,b)$ simultaneously. More precisely, we are interested in the following set. 

\begin{definition}[Closure of a family of parametric formulations]
Let $X\subseteq\RR^{n_1}$, $\PP\subseteq (\RR^{m\times n_1}\times \RR^{m\times n_2} \times\RR^m)$, and $P(A,G,b)=\{x \in X\tq \exists u\in\RR^{n_2}\ \textup{s.t.}\ A x+Gu\leq b\}$ for $(A,G,b)\in \PP$.  The closure of a family of parametric formulations w.r.t. the set of parameters $\PP$, denoted $\Cl(P(\PP))$, is the set obtained by intersecting all the formulations $P(A,G,b)$ with $(A,G,b)\in \PP$, that is,
$$\Cl(P(\PP))=\bigcap_{(A,G,b)\in \PP}P(A,G,b)=\{x \in X\tq \textup{for all}\ (A,G,b)\in \PP,\ \exists u\in\RR^{n_2}\ \textup{s.t.}\ A x+Gu\leq b\  \}.$$
\end{definition}

The following classic result from the Robust Optimization literature (see~\cite{ROBook2009}) allows us to restrict our analysis to sets of parameters $\PP$ that are closed and convex, and also gives a sufficient condition for the closure $\Cl(P(\PP))$ to be a polyhedral set. 

\begin{lemma}[see~\cite{ROBook2009}]\label{lem:robustprops}
Let $X\subseteq\RR^n_1$, $\PP\subseteq (\RR^{m\times n_1}\times \RR^{m\times n_2} \times\RR^m)$, and $P(A,G,b)=\{x \in X\tq \exists u\in\RR^{n_2}\ \textup{s.t.}\ A x+Gu\leq b\}$ for all $(A,G,b)\in \PP$.
\begin{enumerate}
    \item The sets of parameters $\PP$ and the closure of the convex hull of $\PP$ give the same intersection:
$$\Cl(P(\PP))=\Cl(P(\overline{\conv(\PP)})).$$
\item If $X$ and $\PP$ are polyhedra, then $
\Cl(P(\PP))$ is a polyhedron.
\end{enumerate}
\end{lemma}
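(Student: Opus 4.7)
The plan is to prove each part separately. For Part~1, the inclusion $\Cl(P(\overline{\conv(\PP)})) \subseteq \Cl(P(\PP))$ is immediate since $\PP \subseteq \overline{\conv(\PP)}$ and intersecting over a larger family of sets only shrinks the result. For the reverse inclusion, I would fix $x \in \Cl(P(\PP))$ and consider the \emph{feasibility set in parameter space}
$$F_x := \{(A,G,b) \tq \exists u \in \RR^{n_2},\ Ax + Gu \leq b\}.$$
If $F_x$ is closed and convex, then $\PP \subseteq F_x$ immediately implies $\overline{\conv(\PP)} \subseteq F_x$, which is precisely the statement $x \in \Cl(P(\overline{\conv(\PP)}))$. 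In this paper's applications the $u$-coefficient matrix $G$ is fixed across the family (only the $x$-coefficients and right-hand sides depend on $d$ or $b$), and in this fixed-recourse setting the claim is transparent: $F_x$ is the preimage of the polyhedral cone $G\RR^{n_2} + \RR^m_+$ under the affine map $(A,b) \mapsto b - Ax$, hence both closed and convex.

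For Part~2, assume $X$ and $\PP$ are polyhedra; by Part~1 we may work with $\PP$ directly. In the fixed-recourse case, Farkas' lemma rewrites the condition $b - Ax \in G\RR^{n_2} + \RR^m_+$ as $y^T(b - Ax) \geq 0$ for every $y$ in the polyhedral cone $C = \{y \in \RR^m_+ \tq G^T y = 0\}$. Letting $y^1, \ldots, y^K$ be a finite set of generators of $C$, it suffices to impose this condition for each $y^k$. For each $k$, the set
$$\{x \in X \tq \forall (A,b) \in \PP,\ y^{kT}(b - Ax) \geq 0\}$$
is polyhedral, because the universal quantifier can be discharged by LP duality: $\min_{(A,b)\in\PP} y^{kT}(b - Ax)$ is an LP whose objective depends linearly on $x$, so the value function is piecewise linear and concave in $x$, and the condition ``$\geq 0$'' carves out a polyhedron. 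Equivalently, one can run through the finitely many extreme points and rays of $\PP$, each of which gives a single linear inequality in $x$. Intersecting all such conditions across $k$ and with $X$ yields the desired polyhedral description of $\Cl(P(\PP))$.

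The main obstacle is establishing convexity and closedness of $F_x$ when $G$ is also allowed to vary: the bilinear coupling between $G$ and $u$ breaks the affine-preimage argument, and in general one would need a more delicate approach (or invoke the cited reference \cite{ROBook2009} directly). Fortunately this case is not needed here: all three families $d$-$\mtz$, $d$-$\dl$, and $b$-$\scf$ share a common fixed recourse matrix $G$, so the simpler fixed-recourse argument above covers every use of the lemma in the paper.
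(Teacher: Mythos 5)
Your proof is correct, but the comparison here is unusual: the paper gives no proof of Lemma~\ref{lem:robustprops} at all --- it is stated with a pointer to \cite{ROBook2009} and used as a black box --- so your argument is a self-contained substitute rather than a variant of an existing one. Both of your key steps are sound in the fixed-recourse setting: for Part~1, the feasibility set $F_x=\{(A,b)\tq b-Ax\in G\RR^{n_2}+\RR^m_+\}$ is the preimage of a polyhedral (hence closed convex) cone under a map that is affine in $(A,b)$ for fixed $x$, so it absorbs $\overline{\conv(\PP)}$ once it contains $\PP$; for Part~2, the Farkas reduction to finitely many generators of $\{y\in\RR^m_+\tq G^Ty=0\}$ followed by minimizing a linear function of $(A,b)$ over the polyhedron $\PP$ yields finitely many linear inequalities in $x$. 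Your observation that all three families in the paper ($d$-MTZ, $d$-DL, $b$-SCF) keep the $u$-coefficients fixed, with only $A$ and $b$ depending on the parameter, is exactly what makes this cover every use of the lemma. One point deserves to be made more forcefully than you do: the caveat about varying $G$ is not merely a limitation of your method --- Part~1 is false without fixed recourse. For example, with $m=2$, $n_2=1$, $A=0$, $x=0$, the parameter points $G=(1,0)^T$, $b=(-1,0)^T$ and $G=(0,-1)^T$, $b=(0,-1)^T$ are each feasible (take $u=-1$ and $u=1$ respectively), while their midpoint requires $u\leq-1$ and $u\geq1$ simultaneously. So the fixed-recourse hypothesis is needed for the statement itself, as in the cited robust optimization result, and the lemma as printed is slightly overstated. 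A last trivial remark: in Part~2, ``extreme points and rays of $\PP$'' should be a finite Minkowski--Weyl decomposition in case $\PP$ contains lines; this changes nothing.
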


For the $d$-$\mtz$, $d$-$\dl$, and $b$-$\scf$ parametric formulations, the set of parameters $\PP$ is not a polyhedron, as neither $D$ or $B$ are closed sets. However, since their closure is a polytope, by part (1.) of Lemma \ref{lem:robustprops} we will be able to work with the polytopes $\overline{D}$ and $\overline{B}$ when computing the closures of the associated parametric formulations.

The closure of a family of parametric formulations is defined by intersecting all the formulations $P(A,G,b)$ in the $x$-space. We are also interested in studying sets in the $x$-space that are obtained from extended formulations that consider an arbitrary number of variables. This is motivated by the study of the ``intersection'' of all the formulations $Q(A,G,b)$, which, by Observation~\ref{obs:Q_different_u}, must be done by considering a copy of the variable $u$ for each parameter $(A,G,b)$ in order to obtain a valid formulation in the $x$-space.

\begin{definition}[Extended formulation with arbitrary number of variables] Let $X\subseteq\RR^{n_1}$ and $\PP\subseteq (\RR^{m\times n_1}\times \RR^{m\times n_2} \times\RR^m)$ be a set of parameters. Denote $Q(A,G,b)=\{(x,u) \in X\times\RR^{n_2}\tq  A x+Gu\leq b\}$ for $(A,G,b)\in \PP$. We define
$$\Ef(Q(\PP))=\{(x,u)\in X\times\left(\RR^{n_2}\right)^{\PP}\tq  A x+Gu(A,G,b)\leq b\  \textup{for all}\ (A,G,b)\in \PP\}.$$
Here, we identify $\left(\RR^{n_2}\right)^{\PP}$ with the set of functions $u\tq \PP\rightarrow \RR^{n_2}$, which in general is an infinite-dimensional vector space. 
\end{definition}

If $\PP$ is finite in the above definition, we recover $\RR^{n_2\times \PP}$, whose dimension is $n_2|\PP|$. In particular, if $\PP=\{(A^1,G^1,b^1),\ldots,(A^k,G^k,b^k)\}$, we can write
$$\Ef(Q(\PP))=\{(x,(u^1,\ldots,u^k))\in X\times\left(\RR^{n_2}\right)^k\tq  A^i x+G^iu^i\leq b^i\  \textup{for all}\ i=1,\ldots,k\}.$$

 Let $(A,G,b)\in \PP$. Note that by definition we have $P(A,G,b)=\proj_x(Q(A,G,b))$. A similar relation can be established between the operators $\Cl(\cdot)$ and $\Ef(\cdot)$ when considering all the formulations with parameters in the set $\PP$.

\begin{lemma}\label{lemma:operators_relation} Let $X\subseteq\RR^n_1$, $\PP\subseteq (\RR^{m\times n_1}\times \RR^{m\times n_2} \times\RR^m)$, and for $(A,G,b)\in \PP$, let $P(A,G,b)=\{x \in X\tq \exists u\in\RR^{n_2}\ \textup{s.t.}\ A x+Gu\leq b\}$ and  $Q(A,G,b)=\{(x,u) \in X\times\RR^{n_2}\tq  A x+Gu\leq b\}$  .
    We have that
    $$\proj_x(\Ef(Q(\PP)))=\Cl(P(\PP))$$
\end{lemma}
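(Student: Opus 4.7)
The plan is to prove the two inclusions separately, since the statement essentially says that the extended formulation $\Ef(Q(\PP))$ is nothing more than a bookkeeping device that records, for each parameter $(A,G,b)\in\PP$, a witness $u(A,G,b)$ certifying that $x\in P(A,G,b)$.

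For the inclusion $\proj_x(\Ef(Q(\PP)))\subseteq\Cl(P(\PP))$, I would take $x\in\proj_x(\Ef(Q(\PP)))$ and unpack the definition: there is a function $u\tq\PP\to\RR^{n_2}$ with $Ax+Gu(A,G,b)\leq b$ for every $(A,G,b)\in\PP$. Fixing any $(A,G,b)\in\PP$, the value $u(A,G,b)\in\RR^{n_2}$ witnesses that $x\in P(A,G,b)$, so $x$ lies in every member of the family and hence in $\Cl(P(\PP))$.

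For the reverse inclusion $\Cl(P(\PP))\subseteq\proj_x(\Ef(Q(\PP)))$, I would start with $x\in\Cl(P(\PP))\subseteq X$. By definition, for each $(A,G,b)\in\PP$ there exists some $u_{(A,G,b)}\in\RR^{n_2}$ such that $Ax+Gu_{(A,G,b)}\leq b$. The key step is to assemble these into a single element of $(\RR^{n_2})^{\PP}$: define $u\tq\PP\to\RR^{n_2}$ by $u(A,G,b):=u_{(A,G,b)}$. Then $(x,u)\in\Ef(Q(\PP))$ by construction, so $x\in\proj_x(\Ef(Q(\PP)))$.

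There is no real obstacle here: the lemma is a definitional unwinding, and the whole content is that $\Ef$ uses an \emph{independent} copy of the auxiliary variable $u$ for each parameter, which allows existential quantifiers to be pulled inside the conjunction over $\PP$. The one subtle point worth flagging, if $\PP$ is infinite, is that the backward inclusion silently invokes the axiom of choice to pick the witnesses $u_{(A,G,b)}$ simultaneously; I would add a single sentence acknowledging this, and otherwise the proof is a clean two-line argument per inclusion.
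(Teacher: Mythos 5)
Your proof is correct and follows essentially the same route as the paper, which establishes the identity as a one-line chain of set equalities whose key step is exactly your exchange of ``there exists a function $u\tq\PP\to\RR^{n_2}$'' with ``for all $(A,G,b)\in\PP$ there exists $u\in\RR^{n_2}$''. Your remark about the axiom of choice for infinite $\PP$ is a fair observation that the paper leaves implicit, but it does not change the substance of the argument.
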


\begin{proof}
    We have
    \begin{align*}
        \proj_x(\Ef(Q(\PP)))&=\{x\in X\tq  \exists\ u\in \left(\RR^{n_2}\right)^{\PP}\ \textup{s.t.}\   A x+Gu(A,G,b)\leq b\  \textup{for all}\ (A,G,b)\in \PP\}\\
        &=\{x \in X\tq \textup{for all} \ (A,G,b)\in \PP,\ \exists\ u\in\RR^{n_2}\ \textup{s.t.}\ A x+Gu\leq b\}\\
        &=\bigcap_{(A,G,b)\in \PP}\{x \in X\tq \exists u\in\RR^{n_2}\ \textup{s.t.}\ A x+Gu\leq b\}\\
        &=\bigcap_{(A,G,b)\in \PP}P(A,G,b)=\Cl(P(\PP)).
    \end{align*}
\end{proof}

As a corollary of the previous results, we obtain that if the set of parameters $\PP$ is a polytope, then the associated closure and the extended formulation with arbitrary number of variables have a finite representation in terms of the vertices of the polytope.

\begin{corollary}\label{prod_V}
    If $\mathcal P$ is a nonempty polytope with vertices $\mathcal V$, then $\Cl(P(\mathcal P))=\Cl(P(\mathcal V))$ and $\Ef (Q(\mathcal P)) = \Ef (Q(\mathcal V))$.
\end{corollary}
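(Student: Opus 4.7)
The plan is to derive both equalities by chaining together Lemma~\ref{lem:robustprops}(1) with Lemma~\ref{lemma:operators_relation}, using the observation that the convex hull of a finite set is already closed. Since $\mathcal{V}$ is the (finite) vertex set of the polytope $\mathcal{P}$, we have $\mathcal{P} = \conv(\mathcal{V}) = \overline{\conv(\mathcal{V})}$, and this identification is the only piece of information we really need beyond the two earlier results.

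For the first equality, I would invoke Lemma~\ref{lem:robustprops}(1) directly with $\PP := \mathcal{V}$: this yields $\Cl(P(\mathcal{V})) = \Cl(P(\overline{\conv(\mathcal{V})}))$, and combining with $\overline{\conv(\mathcal{V})} = \mathcal{P}$ gives $\Cl(P(\mathcal{V})) = \Cl(P(\mathcal{P}))$. No further work is required here.

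For the second equality, the natural reading (consistent with how $\Ef$ is used throughout the paper) is that the two extended formulations define the same set in the $x$-space. I would apply Lemma~\ref{lemma:operators_relation} twice, once with $\PP := \mathcal{P}$ and once with $\PP := \mathcal{V}$, to obtain
\begin{equation*}
\proj_x(\Ef(Q(\mathcal{P}))) = \Cl(P(\mathcal{P})) \quad\text{and}\quad \proj_x(\Ef(Q(\mathcal{V}))) = \Cl(P(\mathcal{V})),
\end{equation*}
and then use the first equality to conclude that both $x$-projections coincide. To promote this to the stated set equality, I would additionally note that any $(x,u) \in \Ef(Q(\mathcal{V}))$ can be extended to a point $(x,\tilde u) \in \Ef(Q(\mathcal{P}))$ by leveraging that $x \in \Cl(P(\mathcal{P}))$ and picking, for each $(A,G,b)\in \mathcal{P}\setminus\mathcal{V}$, any $\tilde u(A,G,b)$ satisfying $Ax+G\tilde u(A,G,b)\le b$; conversely, restricting a function on $\mathcal{P}$ to $\mathcal{V}$ yields a feasible point of $\Ef(Q(\mathcal{V}))$.

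The only subtle point, and therefore the main thing to be careful about, is the ambient-space mismatch between $\Ef(Q(\mathcal{P}))$ and $\Ef(Q(\mathcal{V}))$: the former lives in $X\times(\RR^{n_2})^{\mathcal{P}}$ (an infinite-dimensional space) while the latter lives in the finite-dimensional $X\times(\RR^{n_2})^{|\mathcal{V}|}$. I would therefore state the equality via the canonical restriction/extension of $u$-functions described above, making explicit that feasibility on $\mathcal{V}$ is equivalent to feasibility on all of $\mathcal{P}$ in the sense that they generate the same projection onto $x$. Everything else is a direct appeal to the preceding lemmas.
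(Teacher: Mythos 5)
Your proposal is correct and follows essentially the same route as the paper: the first equality via Lemma~\ref{lem:robustprops}(1) applied to $\mathcal V$ together with $\mathcal P=\conv(\mathcal V)=\overline{\conv(\mathcal V)}$, and the second via two applications of Lemma~\ref{lemma:operators_relation} chained through the first equality. Your extra care about the ambient-space mismatch is a sensible refinement the paper omits --- its proof in fact only establishes $\proj_x(\Ef(Q(\mathcal P)))=\proj_x(\Ef(Q(\mathcal V)))$ and leaves the meaning of the literal set equality implicit.
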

\begin{proof}
    Since $\mathcal P=\conv(\mathcal V)$, by (1.) of Lemma \ref{lem:robustprops}, we obtain $\Cl(P(\mathcal P))=\Cl(P(\mathcal V))$. Now, by Lemma \ref{lemma:operators_relation}, we have $\proj_x(\Ef(Q(\PP)))=\Cl(P(\PP))$ for any set of parameters $\PP$, and therefore $$\proj_x(\Ef(Q(\mathcal P)))=\Cl(P(\mathcal P))=\Cl(P(\mathcal V))=\proj_x(\Ef(Q(\mathcal V))).$$
\end{proof}

\begin{observation}
    If $P(A,G,b)$ is a formulation and $Q(A,G,b)$ is an extended formulation of a set $S$ for all $(A,G,b)\in \PP$, it is easy to see that $\Cl(P(\PP))$ is a formulation of $S$ and that $\Ef(Q(\PP))$ is an extended formulation of $S$ with $E=\left(\RR^{n_2}\right)^{\PP}$. As in Observation~\ref{obs:Q_different_u}, we have that the set $\{(x,u)\in X\times\RR^{n_2}\tq  A x+Gu\leq b\  \textup{for all}\ (A,G,b)\in \PP\}$, where we use the common variable $u\in\RR^{n_2}$ over all parameters in $\PP$, in general is not a formulation of $S$. Finally, $\Cl(P(\PP))$ (resp. $\Ef(Q(\PP))$ can be a formulation (resp. extended formulation) of $S$ even if $P(A,G,b)$ (resp. $Q(A,G,b)$)  is not a formulation (resp. extended formulation) for some $(A,G,b)\in \PP$ (see Proposition~\ref{prod_mtz} in Section \ref{sec:closure_d-MTZformulations}, Proposition~\ref{prod_dl} in Section \ref{sec:closure_d-DLformulations}, and Proposition~\ref{prod_SCF} in Section \ref{sec:closure_d-SCFformulations}).
\end{observation}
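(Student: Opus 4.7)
The plan is to verify the four assertions packaged into the observation in order, each following from a short set-theoretic calculation together with the lemmas already at hand.

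For the first assertion, that $\Cl(P(\PP))$ is a formulation of $S$ whenever each $P(A,G,b)$ is, I would simply observe that taking integer points commutes with arbitrary intersections. Concretely,
$$\Cl(P(\PP))\cap \ZZ^{n_1}=\left(\bigcap_{(A,G,b)\in\PP}P(A,G,b)\right)\cap\ZZ^{n_1}=\bigcap_{(A,G,b)\in\PP}\left(P(A,G,b)\cap\ZZ^{n_1}\right),$$
and since every factor in this intersection is equal to $S$ by hypothesis (and $\PP$ is nonempty), the intersection itself is $S$. Thus $\Cl(P(\PP))$ is a formulation of $S$.

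For the second assertion, that $\Ef(Q(\PP))$ is an extended formulation of $S$ with auxiliary space $E=(\RR^{n_2})^{\PP}$, I would invoke Lemma~\ref{lemma:operators_relation} to rewrite $\proj_x(\Ef(Q(\PP)))$ as $\Cl(P(\PP))$, and then apply the first part: $\proj_x(\Ef(Q(\PP)))\cap\ZZ^{n_1}=\Cl(P(\PP))\cap\ZZ^{n_1}=S$. The third assertion — that the set obtained by using a single common $u$ in place of a function-valued $u$ is typically not a formulation of $S$ — is already witnessed by the two-parameter example pointed out in Observation~\ref{obs:Q_different_u}, so I would simply cite that discussion and reiterate why the projection can strictly shrink when we force the same $u$ to satisfy every system in $\PP$. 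The final assertion, that $\Cl(P(\PP))$ or $\Ef(Q(\PP))$ can be a (extended) formulation of $S$ even when some individual $P(A,G,b)$ or $Q(A,G,b)$ fails to be, is not proved at this point and is instead exemplified later by Propositions~\ref{prod_mtz}, \ref{prod_dl}, and \ref{prod_SCF}; I would defer to those constructions.

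The whole argument is essentially bookkeeping and there is no genuine technical obstacle. The only conceptual point worth flagging is the asymmetry between intersecting in $x$-space and intersecting in extended space: integer points behave well under arbitrary intersections, which makes the first two claims automatic, but in extended space insisting on a common $u$ shrinks the projection and may destroy the formulation property, which is precisely the reason the $\Ef$ operator allocates one copy of $u$ per parameter in $\PP$.
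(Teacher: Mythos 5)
Your proposal is correct and matches the paper's intent exactly: the paper states this as an observation with no written proof ("it is easy to see"), and your argument — integer points commute with arbitrary intersections for the first claim, Lemma~\ref{lemma:operators_relation} reducing the second claim to the first, and deferral to Observation~\ref{obs:Q_different_u} and Propositions~\ref{prod_mtz}, \ref{prod_dl}, \ref{prod_SCF} for the remaining two — is precisely the bookkeeping the authors had in mind. No gaps.
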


\subsection{Properties of the sets of parameters $D$ and $B$}

Let $\overline D$ be the topological closure of $D$, that is, $\overline D=\{d\in\RR^{A_1}_+\tq \sum_{ij\in C}d_{ij}\leq 1\ \textup{for all}\ C\in\mathcal C_1\}$.

\begin{proposition}
$\overline{D}$ is a full-dimensional polytope and all its inequalities are non-redundant and thus facet-defining.
\end{proposition}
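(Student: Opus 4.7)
The plan is to first verify that $\overline{D}$ is a polytope and full-dimensional by exhibiting an explicit interior point. Nonemptiness is trivial since $0 \in \overline{D}$, and boundedness follows because, for every $ij \in A_1$, applying the cycle inequality to the $2$-cycle $\{ij,ji\} \in \mathcal{C}_1$ gives $d_{ij}+d_{ji}\leq 1$, so $d_{ij} \in [0,1]$. For full-dimensionality, I would take $d^* = \frac{1}{n}\ones$: every $C \in \mathcal{C}_1$ has length $|C| \leq n-1$, so $\sum_{ij \in C} d^*_{ij} = |C|/n \leq (n-1)/n < 1$, and $d^*_{ij} = 1/n > 0$ for every $ij \in A_1$. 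Hence every defining inequality of $\overline{D}$ is strict at $d^*$, placing $d^*$ in $\interior(\overline{D})$.

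Since $\overline{D}$ is full-dimensional, to conclude that every defining inequality is facet-inducing it suffices to exhibit, for each one, a feasible point at which only that inequality is tight. For the nonnegativity inequality $d_{ij} \geq 0$, I would perturb $d^*$ into $\bar d$ by setting $\bar d_{ij} = 0$ and keeping every other coordinate equal to $1/n$: every other nonnegativity is strict, and every cycle sum either equals its value at $d^*$ or decreases by $1/n$, in any case remaining strictly below $1$. Therefore $d_{ij} \geq 0$ is the unique tight inequality at $\bar d$ and hence facet-defining.

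For the cycle inequality associated with $C \in \mathcal{C}_1$ of length $k$, I would use the point $\tilde d$ defined by $\tilde d_{ij} = 1/k$ for $ij \in C$ and $\tilde d_{ij} = \epsilon$ for $ij \in A_1 \setminus C$, where $0 < \epsilon < 1/(k(n-1))$. Then $\tilde d > 0$ and $\sum_{ij \in C}\tilde d_{ij} = 1$ is tight. For any other $C' \in \mathcal{C}_1$, writing $a = |C \cap C'|$ and $b = |C' \setminus C|$, one has $\sum_{ij \in C'}\tilde d_{ij} = a/k + b\epsilon$, which is strictly below $1$ as long as $a \leq k-1$, since $a/k + b\epsilon \leq (k-1)/k + (n-1)\epsilon < 1$ by the choice of $\epsilon$.

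The main obstacle is this last combinatorial step: verifying that $a \leq k-1$, equivalently $b \geq 1$, whenever $C' \neq C$. This amounts to showing that no simple directed cycle $C' \neq C$ can have all its arcs contained in the arc set of $C$. I would argue as follows: within $C$, every node has in-degree and out-degree exactly $1$, so if $C'$ uses an arc $(v_i,v_{i+1})$ of $C$, the next arc of $C'$ is forced to be the unique outgoing arc of $v_{i+1}$ inside $C$, namely $(v_{i+1},v_{i+2})$; iterating around $C$ yields $C' = C$, contradicting $C' \neq C$. With this fact, the cycle inequality is also facet-defining, completing the proof.
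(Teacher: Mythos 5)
Your proof is correct and follows essentially the same route as the paper's: establish full-dimensionality via a strictly feasible point, then give a per-inequality witness (you use a feasible point at which only that inequality is tight, while the paper uses a point violating only that inequality --- two standard, equivalent criteria for a full-dimensional polytope). You additionally handle the nonnegativity constraints $d_{ij}\geq 0$ and spell out the combinatorial fact that no cycle $C'\neq C$ can have all its arcs inside $C$, both of which the paper leaves implicit.
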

\begin{proof}
Note that as $\overline{D}\subseteq\RR^{A_1}_{+}$, for any $d\in \overline{D}$ we have $d_{ij}\geq0$ for all  $ij\in A_1$. On the other hand, since $G$ is a complete directed graph, every arc $ij\in A_1$ belongs to some cycle in $\mathcal C_1$. Hence,  variable $d_{ij}$ appears in at least one inequality defining $\overline{D}$, and thus $d_{ij}\leq 1$ for any $d\in \overline{D}$. This shows that $\overline{D}\subseteq [0,1]^{A_{1}}$, and therefore $\overline{D}$ is bounded, hence, a polytope. On the other hand, since the point $\frac{1}{|A_1|+1}\ones\in\RR^{A_1}_{+}$  satisfies all inequalities defining $\overline{D}$ strictly, we conclude $\overline{D}$ is a full-dimensional set. 
    
Next we show that for each $C\in \mathcal{C}_1$ the inequality $\sum_{ij\in C}d_{ij}\leq 1$ is facet-defining. As $\overline{D}$ is full-dimensional, it suffices to show that this inequality is nonredundant. To see this, observe that the point $\frac{1}{|C|-1}\ones_C\in\RR^{A_1}_{+} $ satisfies all inequalities defining  $\overline{D}$ except for the one associated to $C$.
\end{proof}

 \begin{proposition}\label{nphard}
 The separation problem with respect to $\overline D$ is NP-hard.
\end{proposition}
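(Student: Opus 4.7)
The plan is to reduce from the Directed Hamiltonian Cycle problem (DHC), which is NP-complete. Given a DHC instance $H=(V,E)$ with $|V|=m\geq 3$, I will construct in polynomial time a point $d^*\in\RR^{A_1}$ on an ATSP instance with $n=m+1$ nodes (identifying $N_1$ with $V$) such that $d^*\notin\overline D$ if and only if $H$ admits a Hamiltonian cycle. Since any separation routine for $\overline D$ must, at minimum, decide membership, a polynomial-time separation algorithm would yield a polynomial-time algorithm for DHC, forcing $\textup{P}=\textup{NP}$.

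Concretely, define
\[
d^*_{ij}=\tfrac{1}{m-1}\ \text{ if }ij\in E,\qquad d^*_{ij}=0\ \text{ if }ij\in A_1\setminus E.
\]
Then $d^*\in\RR^{A_1}_+$, and for any simple directed cycle $C\in\mathcal C_1$ on $N_1$,
\[
\sum_{ij\in C}d^*_{ij}=\frac{|C\cap E|}{m-1},
\]
with $|C\cap E|\leq|C|\leq m$. Hence the inequality $\sum_{ij\in C}d^*_{ij}>1$ is equivalent to $|C\cap E|\geq m$, which forces both $|C|=m$ and $C\subseteq E$, i.e., $C$ is a Hamiltonian cycle of $H$. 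Conversely, any Hamiltonian cycle of $H$ violates the corresponding cycle inequality in the description of $\overline D$. Therefore $d^*\notin\overline D$ iff $H$ has a Hamiltonian cycle, and the construction is clearly polynomial in the size of $H$.

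The main delicacy is the calibration of the weight $1/(m-1)$. With a smaller normalization, even a Hamiltonian cycle of $H$ would fail to produce a violation; with a larger one (for instance $1/m$), short cycles of length $k<m$ whose arcs all lie in $E$ could spuriously violate the cycle inequality. With the chosen value, such short cycles contribute at most $k/(m-1)\leq 1$, and any length-$m$ cycle in the complete digraph on $N_1$ that uses at least one non-$E$ arc satisfies the inequality, so only Hamiltonian cycles of $H$ certify non-membership. A minor point to note is that $d^*$ is nonnegative but not strictly positive, so $d^*\notin D$; however, the statement concerns separation over $\overline D$, whose definition only requires $d\geq 0$, so this is harmless.
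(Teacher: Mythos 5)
Your reduction is correct, and it takes a genuinely different (and somewhat more economical) route than the paper's. The paper reduces from the \emph{longest Hamiltonian cycle} problem with arbitrary integer weights $d\in\ZZ^{A_1}_+$ and bound $k$: it shifts the weights to $d'=d+k\ones$ so that $d'$ satisfies the triangle inequality, which guarantees that the maximum-weight cycle in $\mathcal C_1$ is attained at a Hamiltonian cycle, and then rescales by $k(|N_1|+1)$ to turn the question ``is the longest Hamiltonian cycle heavier than $k$?'' into ``is $\hat d\notin\overline D$?''. You instead reduce directly from the unweighted Directed Hamiltonian Cycle problem by placing weight $1/(m-1)$ on the arcs of $E$ and $0$ elsewhere; the calibration argument you give is exactly right: a cycle $C$ violates $\sum_{ij\in C}d^*_{ij}\leq 1$ iff $|C\cap E|\geq m$, which, since $|C\cap E|\leq|C|\leq m$, happens iff $C$ is a Hamiltonian cycle of $H$, and the converse direction is immediate. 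Both proofs share the underlying insight that detecting a violated cycle inequality amounts to finding a sufficiently heavy cycle and that the weights can be arranged so only Hamiltonian cycles qualify; your version avoids the triangle-inequality detour entirely at the cost of nothing, since both arguments in the end only establish hardness of the membership decision (and hence of separation). Two small points worth making explicit if you write this up: the identification $n=m+1$ requires $m\geq 3$ to meet the paper's standing assumption $n\geq 4$, which is harmless since DHC remains NP-complete under that restriction; and, as you already note, $d^*\in\overline D\setminus D$ is fine because $\overline D$ only requires nonnegativity.
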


\begin{proof}
  Let $\mathcal H_1\subseteq \mathcal C_1$ be the set of Hamiltonian cycles on $N_1$. We consider an instance of longest Hamiltonian cycle given by $d\in\ZZ^{A_1}_+$ and a lower bound $k\in\ZZ_+$. The instance has answer Yes if and only if there exists $C\in\mathcal H_1$ such that $\sum_{ij\in C}d_{ij}\geq k+1$, where this condition is equivalent to $\sum_{ij\in C}d_{ij}>k$. Without loss of generality, we can assume that $d\leq k\ones$, for otherwise the answer is trivially Yes.

   Let $d'=d+k\ones$. We have $\max\{\sum_{ij\in C} d'_{ij}\tq C\in\mathcal H_1\}=\max\{\sum_{ij\in C} d_{ij}\tq C\in\mathcal H_1\}+k|N_1|$. Note that $d_{ij}'=d_{ij}+k\leq 2k\leq 2k+d_{il}+d_{lj}=d_{il}'+d_{lj}'$ for distinct $i,j,l\in N_1$. Therefore, $d'$ satisfies the triangle inequality and hence $\max\{\sum_{ij\in C} d'_{ij}\tq C\in\mathcal C_1\}=\max\{\sum_{ij\in C}d'_{ij}\tq C\in\mathcal H_1\}$, which implies $\max\{\sum_{ij\in C} d'_{ij}\tq C\in\mathcal C_1\}=\max\{\sum_{ij\in C} d_{ij}\tq C\in\mathcal H_1\}+k|N_1|$.
   In particular, $\max\{\sum_{ij\in C} d_{ij}\tq C\in\mathcal H_1\}>k$ if and only if $\max\{\sum_{ij\in C} d'_{ij}\tq C\in\mathcal C_1\}>k(|N_1|+1)$.
   Let $\hat d=\frac{1}{k(|N_1|+1)}d'$. We have $\hat d\notin \overline D$ if and only if there exists $C\in\mathcal C_1$ such that $\sum_{ij\in C}\hat d_{ij}>1$, which is equivalent to $\max\{\sum_{ij\in C} d_{ij}\tq C\in\mathcal H_1\}>k$. Since building $\hat d$ takes linear time, we have a polynomial reduction.
\end{proof}

\begin{observation}\label{obs_nphard}
    Proposition~\ref{nphard} implies that enumerating the vertices of $\overline D$ is unlikely to be a tractable problem. On the other hand, letting $\overline B=\{b\in \RR^{N_1}_+\tq \sum_{i\in N_1} b_i=1\}$, we have that $\overline B$ is an $(n-2)$-dimensional polytope whose vertices are the $n-1$ canonical vectors in $\RR^{N_1}$.
\end{observation}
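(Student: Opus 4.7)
The plan is to handle the two independent claims in Observation~\ref{obs_nphard} separately.

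For the first claim, the approach is a reduction via the Gr\"otschel-Lov\'asz-Schrijver (GLS) polynomial equivalence between separation and optimization over rational polytopes. If one could list the vertices of $\overline D$ in time polynomial in $n$, then only polynomially many would be produced, and any linear functional could be optimized over $\overline D$ by scanning the list and evaluating the objective at each vertex. By GLS, this would yield a polynomial-time separation algorithm, contradicting Proposition~\ref{nphard} unless $P=NP$. This formalizes the informal word ``unlikely'' in the observation. A subtlety worth noting is that the argument rules out polynomial-time total enumeration, but polynomial-delay enumeration remains consistent with $\overline D$ having super-polynomially many vertices, which is the regime that Proposition~\ref{nphard} effectively forces.

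For the second claim, I would recognize $\overline B$ as the standard unit simplex in $\RR^{N_1}$ and verify its properties directly. The dimension of $\overline B$ is at most $n-2$ because every $b \in \overline B$ satisfies the single equality $\sum_{i \in N_1} b_i = 1$, placing $\overline B$ inside an $(n-2)$-dimensional hyperplane of $\RR^{N_1}$. Equality holds because the $n-1$ canonical vectors $e_i$ for $i \in N_1$ all belong to $\overline B$ and are affinely independent. Each $e_i$ is a vertex of $\overline B$ since it is the unique maximizer of the linear functional $b \mapsto b_i$ over $\overline B$. Conversely, any $b \in \overline B$ with at least two strictly positive coordinates can be written as a proper convex combination of two distinct points of $\overline B$ obtained by shifting a small amount of mass between two supporting indices, so such a $b$ cannot be a vertex. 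This exhibits exactly the $n-1$ claimed vertices.

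The main obstacle is the first part, which requires invoking a nontrivial external result (GLS) and phrasing the complexity-theoretic conclusion carefully; the second part is elementary simplex geometry and presents no real difficulty.
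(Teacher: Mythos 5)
The paper states this as an \emph{Observation} and supplies no proof of its own, so there is nothing to compare against line by line; your job was to fill in the details, and you have done so correctly. For the first claim, your chain (a polynomial-time enumeration algorithm outputs polynomially many vertices $\Rightarrow$ linear optimization over $\overline D$ by scanning the list $\Rightarrow$ polynomial-time separation via the Gr\"otschel--Lov\'asz--Schrijver equivalence $\Rightarrow$ contradiction with Proposition~\ref{nphard} unless $P=NP$) is sound; the GLS machinery does apply here because $\overline D$ is a well-described polytope (all facet inequalities have $0/1$ coefficients and right-hand sides). One could bypass GLS entirely by noting that, given the explicit polynomial-size vertex list $V$, deciding whether $\hat d\in\conv(V)$ and extracting a separating hyperplane when it is not is a single linear program via Farkas' lemma --- a slightly more elementary route to the same contradiction. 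Your caveat distinguishing total enumeration from polynomial-delay enumeration is a fair reading of the word ``unlikely''; the authors surely intend only the coarse statement you prove. The second claim is, as you say, just the standard simplex: the single equality bounds the dimension by $n-2$, the $n-1$ affinely independent canonical vectors achieve it and are each the unique maximizer of a coordinate functional, and any point with two positive coordinates is a proper convex combination obtained by shifting mass, so no other vertices exist. No gaps.
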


\subsection{Properties of classic formulations for the ATSP}


Since the extended formulations presented in Section \ref{section:our_formulations} are given by different sets of additional variables (in addition to the  binary variables $x_{ij}$ for  $ij\in A$), we consider their projection onto the $x$-space to compare them. The following lemma in the spirit of \cite{velednitsky2017short} will prove useful.

\begin{lemma}\label{lem:projrequiem}
Let $X\subseteq\RR^A$. For each $ij\in A_1$, let $\alpha^{ij}\in\RR^A$ and $\beta^{ij}\in\RR$, and consider the set
$$Q=\left\{(x,u)\in X\times\RR^{N_1}\tq  u_i-u_j+\sum_{kl \in A}\alpha^{ij}_{kl}x_{kl}\leq \beta^{ij}\ \textup{for all}\ ij\in A_1\right\}.$$
Then $\proj_{x}(Q)=P$, where
$$P=\left\{x\in X\tq \sum_{ij\in C}\sum_{kl\in A}\alpha^{ij}_{kl}x_{kl}\leq \sum_{ij\in C}\beta^{ij}\ \textup{for all}\  C\in\mathcal{C}_1\right\}.$$
\end{lemma}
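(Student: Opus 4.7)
The plan is to prove the two inclusions $\proj_x(Q) \subseteq P$ and $P \subseteq \proj_x(Q)$ separately, with the nontrivial direction following from a standard shortest-path / node-potential argument.

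For $\proj_x(Q) \subseteq P$, the argument is a direct telescoping. Given $(x,u) \in Q$ and any cycle $C \in \mathcal{C}_1$, I would sum the defining inequality of $Q$ over all $ij \in C$. Since each node of $C$ is visited exactly once as a head and once as a tail, the sum $\sum_{ij \in C}(u_i - u_j)$ collapses to $0$, leaving exactly the cycle inequality that defines $P$. So $x \in P$.

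For the reverse inclusion, given $x \in P$, I need to exhibit $u \in \RR^{N_1}$ such that $u_i - u_j \leq w_{ij}$ for all $ij \in A_1$, where $w_{ij} := \beta^{ij} - \sum_{kl \in A}\alpha^{ij}_{kl}x_{kl}$. Rewriting the hypothesis $x \in P$ in terms of $w$, it says exactly that $\sum_{ij \in C} w_{ij} \geq 0$ for every $C \in \mathcal{C}_1$, i.e., the directed graph on $N_1$ with arc-weights $w$ contains no negative cycle. I would then fix an arbitrary reference node $r \in N_1$ and define $u_j$ to be the length of a shortest walk from $j$ to $r$ under weights $w$. Since the graph $(N_1,A_1)$ is complete and contains no negative cycle, each such shortest-path distance is a finite real number. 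The Bellman triangle inequality for shortest paths then gives $u_i \leq w_{ij} + u_j$ for every arc $ij \in A_1$, which is exactly the inequality $u_i - u_j \leq w_{ij}$ I wanted, so $(x,u) \in Q$.

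The only delicate point is the well-definedness of the node potentials $u_j$, but this is immediate once the cycle inequalities of $P$ are interpreted as nonnegativity of cycle-weight sums and the completeness of $G$ guarantees reachability; the rest is routine. An equivalent route, which I would mention only briefly if at all, is to apply Farkas' lemma to the system $\{u_i - u_j \leq w_{ij}\}_{ij \in A_1}$: the corresponding dual cone of multipliers $\lambda \geq 0$ satisfying the flow-conservation condition $\sum_{ij}\lambda_{ij}(e_i - e_j) = 0$ is generated by indicator vectors of directed cycles in $(N_1, A_1)$, so the Farkas certificate reduces precisely to the family of cycle inequalities defining $P$. Both approaches give the inclusion with essentially the same content.
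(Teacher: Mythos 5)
Your proposal is correct and follows essentially the same argument as the paper: the telescoping sum over a cycle for the easy inclusion, and shortest-path node potentials with respect to the reduced weights $\beta^{ij}-\sum_{kl}\alpha^{ij}_{kl}x_{kl}$ (well-defined because $x\in P$ rules out negative cycles) for the converse. The only cosmetic difference is the orientation convention for the distances (the paper sets $-u_i$ to be the distance from a fixed node $h$ to $i$, while you measure distances into a fixed node $r$), which yields the same inequalities.
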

\begin{proof}
If $x\in\proj_x(Q)$, then there exists $u\in\RR^{N_1}$ such that $(x,u)\in Q$. Therefore, for any $C\in\mathcal C_1$, $(x,u)$ satisfies $u_i-u_j+\sum_{kl \in A}\alpha^{ij}_{kl}x_{kl}\leq \beta^{ij}$ for all $ij\in C$, which implies $\sum_{ij\in C}\sum_{kl\in A}\alpha^{ij}_{kl}x_{kl}\leq \sum_{ij\in C}\beta^{ij}$. Hence, $x\in P$.

Now, given $x\in P$, let $c_{ij}=\beta^{ij}-\sum_{kl \in A}\alpha^{ij}_{kl}x_{kl}$ for $ij\in A_1$. We have $\sum_{ij\in C}c_{ij}\geq 0$ for all $C\in\mathcal C_1$, and thus there are no cycles in $\mathcal C_1$ with negative length with respect to $c$. Fix $h\in N_1$ and for each $i\in N_1$, let $-u_i\in\RR$ be the length of a shortest path from $h$ to $i$ with respect to $c$. Since there are no negative-length cycles, we have $-u_j\leq -u_i+c_{ij}=-u_i+\beta^{ij}-\sum_{kl \in A}\alpha^{ij}_{kl}x_{kl}$ for all $ij\in A_1$. Therefore, $u\in\RR^{N_1}$ is such that $(x,u)\in Q$, and hence $x\in\proj_x(Q)$.
\end{proof}

Projecting onto the $x$-variables the different extended formulations, we obtain formulations on the $x$-variables that have different constraints whose comparison allows to analyze the strength of the ATSP formulations we consider. 

We first observe that the {\em Clique inequalities} \eqref{eq:DFJ_ineq} in the DFJ formulation dominate the  {\em Circuit inequalities}
\begin{equation}\label{eq:circuit_ineq}
\sum_{ij\in C}x_{ij}\leq |C|-1\quad\textup{for all}\   C\in\mathcal C_1
\end{equation}
and the {\em Weak clique inequalities}
\begin{equation}\label{eq:weak_clique_ineq}
\sum_{ij\in A(S)}x_{ij}\leq |S|-\frac{|S|}{n-1}\quad\textup{for all}\  S\in\mathcal S_1.
\end{equation}
In turn, the {\em Circuit inequalities} dominate the {\em Weak circuit inequalities}
$$\sum_{ij\in C}x_{ij}\leq |C|-\frac{|C|}{n-1}\quad\textup{for all}\  C\in\mathcal C_1.$$

By using Lemma \ref{lem:projrequiem}, it can be shown that the projection of the MTZ and RMTZ formulations onto the $x$-variables is given by the {\em Weak circuit inequalities} \cite{padberg1991analytical} and  {\em Circuit inequalities} \cite{gouveia1999asymmetric}, respectively, while projecting the SCF and MCF formulations yields the {\em Weak clique inequalities} \cite{padberg1991analytical} and {\em Clique inequalities} \cite{wong1980integer}, respectively. In particular, MCF is equivalent to DFJ. The projection of the DL formulation gives rise to a class of {\em Lifted weak circuit inequalities} \cite{bektacs2014requiem}
\begin{equation}\sum_{ij\in C}x_{ij}+\frac{n-3}{n-1}\sum_{ij\in \revcycle{C}} x_{ij}\leq |C|-\frac{|C|}{n-1}\quad\textup{for all}\  C\in\mathcal C_1,\ |C|\geq 3. \nonumber\end{equation}

\begin{observation}\label{obs_c1}
Note that \eqref{AssignPolytope1} and \eqref{AssignPolytope2} imply $\sum_{ij\in A}x_{ij}=n$ and $\sum_{ij\in\delta^+(1)\cup\delta^-(1)}x_{ij}=2$, and thus $\sum_{ij\in A_1}x_{ij}=n-2$. Therefore, the \emph{Clique inequalities} \eqref{eq:DFJ_ineq} for $S\in\mathcal S_1$ with $|S|=n-2$ are redundant, which in turn implies that the above inequalities are redundant as well for $C\in\mathcal C_1$ with $|C|=n-2$.

\end{observation}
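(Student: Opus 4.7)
The plan is to first derive an exact equality $\sum_{ij\in A_1}x_{ij}=n-2$ from the assignment constraints, and then combine it with the assignment equations at the two endpoints relevant to each inequality in order to collapse every constraint in question into a statement implied by $P_\ap$.

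For the preliminary identity, I would sum \eqref{AssignPolytope1} over all $i\in N$ to obtain $\sum_{ij\in A}x_{ij}=n$, and evaluate \eqref{AssignPolytope1} and \eqref{AssignPolytope2} at $i=1$ to get $\sum_{ij\in\delta^+(1)}x_{ij}=1$ and $\sum_{ji\in\delta^-(1)}x_{ji}=1$. Since $A$ has no loops, $\delta^+(1)$ and $\delta^-(1)$ are disjoint, so their union sums to $2$; and since $A$ partitions as $A_1\sqcup\delta^+(1)\sqcup\delta^-(1)$, subtraction yields the claimed identity $\sum_{ij\in A_1}x_{ij}=n-2$.

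Next, for the Clique inequality indexed by $S\in\mathcal S_1$ with $|S|=n-2$, I would write $S=N_1\setminus\{k\}$ for the unique $k\in N_1\setminus S$ and decompose $A_1=A(S)\sqcup\{ik\tq i\in S\}\sqcup\{ki\tq i\in S\}$. Applying \eqref{AssignPolytope1}--\eqref{AssignPolytope2} at node $k$ rewrites $\sum_{i\in S}x_{ik}=1-x_{1k}$ and $\sum_{i\in S}x_{ki}=1-x_{k1}$, so that the Clique inequality $\sum_{ij\in A(S)}x_{ij}\leq|S|-1$ collapses to a statement in $x_{1k}+x_{k1}$ alone, which should be recovered from the remaining equalities of $P_\ap$. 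For $C\in\mathcal C_1$ with $|C|=n-2$, the node set of $C$ is some $S\in\mathcal S_1$ with $|S|=|C|=n-2$ and $C\subseteq A(S)$, so the Circuit, Weak clique and Weak circuit inequalities indexed by $C$ or $S$ are immediately dominated by this Clique inequality. For the Lifted weak circuit inequality I would additionally use that $C\cap\revcycle{C}=\emptyset$, $C\cup\revcycle{C}\subseteq A_1$ and $(n-3)/(n-1)\leq 1$, to bound its left-hand side above by $\sum_{ij\in C\cup\revcycle{C}}x_{ij}\leq\sum_{ij\in A_1}x_{ij}=n-2$.

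The step I expect to be the main technical obstacle is the final reduction of the Clique inequality itself. The identity alone only delivers $\sum_{ij\in A(S)}x_{ij}\leq n-2$, and closing the remaining unit gap to $|S|-1=n-3$ requires the refined bound $x_{1k}+x_{k1}\leq 1$, which does not follow from $P_\ap$ in isolation; the argument goes through most transparently at the largest admissible cardinality $|S|=n-1$, where $A(S)=A_1$ and the Clique inequality becomes the preliminary identity itself. Making the reduction precise at $|S|=n-2$ is therefore the delicate arithmetic point, and I would verify it carefully in the write-up, paying attention to whether an additional constraint from the family being considered (e.g., the pair of degree equalities at nodes $1$ and $k$) is genuinely enough to cover the missing unit of slack.
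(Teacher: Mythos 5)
Your derivation of the identity $\sum_{ij\in A_1}x_{ij}=n-2$ is exactly the paper's argument, and your reduction of the Clique inequality for $|S|=n-2$ to the residual bound $x_{1k}+x_{k1}\leq 1$ is correct. The obstacle you flagged at the end is, however, not a delicate arithmetic point that further care will resolve: the missing unit of slack genuinely cannot be covered, because $x_{1k}+x_{k1}\leq 1$ is not valid for $P_\ap$. Concretely, for $S=N_1\setminus\{k\}$ take $\hat x=\ones_{\{1k,k1\}}+\ones_{\bar C}$, where $\bar C$ is a directed cycle on the $n-2\geq 2$ nodes of $N\setminus\{1,k\}$; then $\hat x\in P_\ap$, yet $\sum_{ij\in A(S)}\hat x_{ij}=|\bar C|=n-2>n-3=|S|-1$, so the Clique inequality \eqref{eq:DFJ_ineq} for $|S|=n-2$ is violated rather than redundant. (This is as it should be: these are precisely the inequalities that cut off assignment solutions consisting of a subtour on $S$ together with the $2$-cycle through nodes $1$ and $k$.) The same point violates the Circuit inequality for the corresponding $C$ with $|C|=n-2$, so the downstream part of your argument inherits the failure.

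The statement as printed contains an off-by-one error, and your instinct about where the argument ``goes through most transparently'' identifies the correct version: the redundant cases are $S=N_1$ (i.e., $|S|=n-1$), where $A(S)=A_1$ and the Clique inequality reads $\sum_{ij\in A_1}x_{ij}\leq n-2$, holding with equality, and $|C|=n-1$, where $\sum_{ij\in C}x_{ij}\leq\sum_{ij\in A_1}x_{ij}=n-2=|C|-1$ (and likewise for the Weak clique, Weak circuit and Lifted weak circuit inequalities, using that $C\cup\revcycle{C}\subseteq A_1$ with coefficients at most $1$ and $x\geq 0$). This is also the only way the observation is invoked elsewhere: the facet characterizations for $P_\mtz(d)$ and $P_\dl(d)$ cite it for $|C|=n-1$, and the one for $P_\scf(b)$ cites it for $S=N_1$. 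So the right fix is not to hunt for an additional constraint closing the gap at $|S|=n-2$, but to replace $n-2$ by $n-1$ in the statement; with that correction your argument --- the identity plus $A(N_1)=A_1$ --- is complete and coincides with the paper's.
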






\section{Parametric $d$-MTZ formulations}\label{sec:mtz}


 Given $d\in\RR^{A_1}_{++}$ and a constant $M>0$, we consider the following generalization of the MTZ inequalities \eqref{eq:MTZ_ineq}:
\begin{equation}\label{eq:genMTZ_ineq_M}
    u_i-u_j+d_{ij}\leq M(1-x_{ij})\quad ij\in A_1.
\end{equation}
For large enough $M$, inequalities \eqref{eq:genMTZ_ineq_M} define a valid extended formulation for the ATSP that we call $d$-MTZ formulation: for $x_{ij}=0$, \eqref{eq:genMTZ_ineq_M} reduces to the superfluous constraint $u_i-u_j+d_{ij}\leq M$, whereas for $x_{ij}=1$, it implies $u_j\geq u_i + d_{ij}>u_i$ and thus prohibits $x$ from defining cycles in $\mathcal C_1$. Constant $M$ can be chosen appropriately as $M=M(d)\geq \max\{\sum_{ij\in C}d_{ij}\tq C\in\mathcal C_1\}$. We recover the inequalities \eqref{eq:MTZ_ineq} defining the original MTZ formulation when we set $d_{ij}=1$ for all $ij\in A_1$ and $M=n-1$.\\

We want to study the effect of different choices of $d\in\RR^{A_1}_{++}$ on the $d$-MTZ formulation. For this purpose, it is convenient to normalize $d$ by choosing $d\in D=\{d\in\RR^{A_1}_{++}\tq \sum_{ij\in C}d_{ij}\leq 1\ \textup{for all}\ C\in\mathcal C_1\}$ so that $M=1$ defines a valid ATSP formulation. We obtain the (normalized) generalized MTZ-like inequalities~\eqref{eq:genMTZ_ineq}:
 \begin{equation*}
     u_i-u_j+d_{ij}\leq 1-x_{ij}\quad ij\in A_1.
 \end{equation*}
For instance, the vector $d^{\mtz}\in D$ defined as $d^{\mtz}_{ij}=\frac{1}{n-1}$ for $ij\in A_1$ gives the classic MTZ inequalities in normalized form:
 $$u_i-u_j+\frac{1}{n-1}\leq 1-x_{ij}.$$

For $d\in D$, recall the set $Q_\mtz(d)$, the continuous relaxation of the extended ATSP formulation obtained when using inequalities \eqref{eq:genMTZ_ineq}:
$$Q_\mtz(d)=\left\{(x,u)\in P_\ap\times\RR^{N_1}\tq u_i-u_j+d_{ij}\leq 1-x_{ij}\ \text{for all }ij\in A_1\right\}.$$
While $d\in \overline D\setminus D$ does not define a valid formulation, for convenience we extend the above definition of $Q_\mtz(d)$ to $d\in\overline D$ as well.

\subsection{Obtaining the $d$-MTZ formulation from convexification}

In this section, we argue that the proposed generalization is a natural and sensible choice. To that end, below we generalize and strengthen Proposition 1 in \cite{bektacs2014requiem}.

For $d\in D$ and $ij\in A_1$, let $Z_\mtz^{ij}(d)$ be the set of vectors $(u_i,u_j,x_{ij},x_{ji})\in\RR^2\times\{0,1\}^2$ such that $x_{ij}+x_{ji}\leq 1$ and
\begin{itemize}
    \item $[x_{ij}=1,\ x_{ji}=0]\Rightarrow u_i+d_{ij}\leq u_j,\ u_j-u_i\leq 1-d_{ji}$
    \item $[x_{ij}=0,\ x_{ji}=1]\Rightarrow u_j+d_{ji}\leq u_i,\ u_i-u_j\leq 1-d_{ij}$
    \item $[x_{ij}=0,\ x_{ji}=0]\Rightarrow u_i-u_j\leq 1-d_{ij},\ u_j-u_i\leq 1-d_{ji}$.
\end{itemize}

\begin{proposition}
The convex hull of $Z_\mtz^{ij}(d)$ is given by
\begin{eqnarray*}
u_i-u_j+x_{ij}\leq 1-d_{ij}\\
u_j-u_i+x_{ji}\leq 1-d_{ji}\\
x_{ij}+x_{ji}\leq 1\\
x_{ij}\geq 0\\
x_{ji}\geq 0.
\end{eqnarray*}
\end{proposition}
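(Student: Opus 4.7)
My plan is to prove both inclusions between $\conv(Z_\mtz^{ij}(d))$ and the polyhedron $P$ described by the five inequalities on the right-hand side. The inclusion $\conv(Z_\mtz^{ij}(d)) \subseteq P$ is straightforward: since $P$ is convex, it suffices to verify each of the five inequalities in each of the three cases $(x_{ij}, x_{ji}) \in \{(1,0),(0,1),(0,0)\}$. For instance, in Case 1 the inequality $u_i - u_j + x_{ij} \leq 1 - d_{ij}$ reduces to $u_j \geq u_i + d_{ij}$, which is exactly part of the definition of $Z_\mtz^{ij}(d)$, and the other checks are equally immediate.

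For the reverse inclusion $P \subseteq \conv(Z_\mtz^{ij}(d))$, given $(u_i, u_j, x_{ij}, x_{ji}) \in P$, I plan to produce an explicit convex decomposition as a sum of three points $z^1, z^2, z^3 \in Z_\mtz^{ij}(d)$, one from each case. The weights will be $\lambda_1 = x_{ij}$, $\lambda_2 = x_{ji}$, $\lambda_3 = 1 - x_{ij} - x_{ji}$, which sum to $1$ and are nonnegative by the three inequalities $x_{ij} \geq 0$, $x_{ji} \geq 0$, and $x_{ij} + x_{ji} \leq 1$ from $P$. Take $z^k$ with $x$-components $(1,0)$, $(0,1)$, $(0,0)$ in cases $k = 1, 2, 3$ respectively, and set $u_i^k = u_i$ together with $u_j^k = u_i + \delta^k$ for scalars $\delta^k$ to be chosen in the Case-$k$ intervals $[d_{ij},\, 1 - d_{ji}]$, $[d_{ij} - 1,\, -d_{ji}]$, and $[d_{ij} - 1,\, 1 - d_{ji}]$; each interval is nonempty because $d_{ij} + d_{ji} \leq 1$, which follows from $d \in D$ applied to the $2$-cycle $\{ij, ji\} \in \mathcal C_1$.

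The core step is selecting the $\delta^k$ so that $\sum_k \lambda_k \delta^k = u_j - u_i$. A short arithmetic computation shows that the minimum and maximum of $\sum_k \lambda_k \delta^k$ over the three intervals are exactly $x_{ij} + d_{ij} - 1$ and $1 - d_{ji} - x_{ji}$, respectively, and these coincide precisely with the lower and upper bounds on $u_j - u_i$ imposed by the first two inequalities of $P$. Hence there is some $t \in [0,1]$ for which setting each $\delta^k$ to $(1-t)\delta^k_{\min} + t\, \delta^k_{\max}$ hits the required value, completing the decomposition. The main obstacle I anticipate is precisely this arithmetic match: the collapse to $x_{ij} + d_{ij} - 1$ and $1 - d_{ji} - x_{ji}$ depends crucially on the identity $\lambda_1 + \lambda_2 + \lambda_3 = 1$, and this telescoping is the real content of the proposition.
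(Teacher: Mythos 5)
Your proof is correct, but it takes a genuinely different route from the paper. The paper invokes disjunctive programming (Balas) to write $\conv(Z_\mtz^{ij}(d))$ as the projection of an extended system with copies $v^1,v^2,v^3$ and multipliers $\lambda_1,\lambda_2,\lambda_3$, and then eliminates the auxiliary variables by successive Fourier--Motzkin projections until only the five stated inequalities remain. You instead prove the two inclusions directly: validity of the inequalities on each of the three pieces for one direction, and for the other an explicit convex decomposition with weights $\lambda_1=x_{ij}$, $\lambda_2=x_{ji}$, $\lambda_3=1-x_{ij}-x_{ji}$ and $u$-components chosen in the per-case intervals for $u_j-u_i$, namely $[d_{ij},1-d_{ji}]$, $[d_{ij}-1,-d_{ji}]$, $[d_{ij}-1,1-d_{ji}]$. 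Your key computation checks out: the attainable range of $\sum_k\lambda_k\delta^k$ telescopes to exactly $[x_{ij}+d_{ij}-1,\ 1-d_{ji}-x_{ji}]$, which is precisely the window that the first two inequalities of $P$ impose on $u_j-u_i$, and nonemptiness of each interval follows from $d_{ij}+d_{ji}\leq 1$ (the $2$-cycle constraint in $D$). What your approach buys is that it is elementary and self-contained, it exhibits the multipliers of the disjunctive relaxation explicitly rather than recovering them through elimination, and it sidesteps the (unstated in the paper) closedness issue in Balas' theorem by producing an exact finite convex combination for every point of $P$; what the paper's approach buys is that it is mechanical and requires no guess about which decomposition will work, which is why the same template transfers directly to the analogous DL proposition. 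One cosmetic remark: when some $\lambda_k=0$ you should simply drop the corresponding point from the combination (or note that its choice is immaterial), but this does not affect correctness.
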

\begin{proof}
Let us write $Z_\mtz^{ij}(d)$ as the union of the following sets:
\begin{itemize}
    \item $\{(u_i,u_j)\in\RR^2\tq u_i+d_{ij}\leq u_j,\ u_j-u_i\leq 1-d_{ji}\}\times\{(1,0)\}$
    \item $\{(u_i,u_j)\in\RR^2\tq u_j+d_{ji}\leq u_i,\ u_i-u_j\leq 1-d_{ij}\}\times\{(0,1)\}$
    \item $\{(u_i,u_j)\in\RR^2\tq u_i-u_j\leq 1-d_{ij},\ u_j-u_i\leq 1-d_{ji}\}\times\{(0,0)\}$
\end{itemize}
By disjunctive programming \cite{balas1979disjunctive}, the convex hull of $Z_\mtz^{ij}(d)$ is given by the projection on the $(u_i,u_j,x_{ij},x_{ji})$ variables of the system
\begin{align*}
u_i=v_i^1+v_i^2+v_i^3& &v_i^1-v_j^1+d_{ij}\lambda_1&\leq 0 \\
   u_j=v_j^1+v_j^2+v_j^3 && v_j^1-v_i^1-(1-d_{ji})\lambda_1&\leq 0 \\
   x_{ij}=\lambda_1  && v_j^2-v_i^2+d_{ji}\lambda_2&\leq 0 \\
   x_{ji}=\lambda_2 && v_i^2-v_j^2-(1-d_{ij})\lambda_2&\leq 0 \\
   \lambda_1+\lambda_2+\lambda_3=1 &&v_i^3-v_j^3-(1-d_{ij})\lambda_3&\leq 0 \\
   \lambda_1,\lambda_2,\lambda_3\geq 0 && v_j^3-v_i^3-(1-d_{ji})\lambda_3&\leq 0.
\end{align*}
By projecting out $(\lambda,v_i^3,v_j^3)$, we arrive to

\begin{align*}
 x_{ij}\geq 0,\ x_{ji}\geq 0 &&v_j^2-v_i^2&\leq-d_{ji}x_{ji} \\
 1-x_{ij}-x_{ji}\geq 0&&  v_i^2-v_j^2&\leq (1-d_{ij})x_{ji} \\
 v_i^1-v_j^1\leq -d_{ij}x_{ij}&& u_i-v_i^1-v_i^2-u_j+v_j^1+v_j^2&\leq (1-d_{ij})(1-x_{ij}-x_{ji})  \\
 v_j^1-v_i^1\leq (1-d_{ji})x_{ij}&&  u_j-v_j^1-v_j^2-u_i+v_i^1+v_i^2&\leq (1-d_{ji})(1-x_{ij}-x_{ji}) 
\end{align*}
Along the first three constraints, by projecting out $v_i^1$, we obtain
\begin{align*}
v_j^2-v_i^2\leq-d_{ji}x_{ji}&&u_i-v_i^2-u_j+v_j^2&\leq (1-d_{ij})(1-x_{ij}-x_{ji})-d_{ij}x_{ij}\\
v_i^2-v_j^2\leq (1-d_{ij})x_{ji}&&u_j-v_j^2-u_i+v_i^2&\leq (1-d_{ji})(1-x_{ij}-x_{ji})+(1-d_{ji})x_{ji}.
\end{align*}
By projecting out $v_j^1$, we obtain the result.
\end{proof}

We remark here that the first and second constraints of the convex hull of $Z_\mtz^{ij}(d)$ are precisely the generalized MTZ inequalities \eqref{eq:genMTZ_ineq} associated to arcs $ij$ and $ji\in A_1$. We also note that these constraints are non-redundant and can be strictly satisfied, and thus they define facets of the convex hull of $Z_\mtz^{ij}(d)$.

\subsection{Projecting $Q_\mtz(d)$}
To compare formulations for different choices of $d$, we consider the projection of $Q_\mtz(d)$ on the $x$ variables, that is, the set $P_\mtz(d)=\proj_x(Q_\mtz(d))$.

\begin{proposition}\label{prop:Pmtz(d)}
We have that 
$$P_\mtz(d)=\left\{x\in P_\ap\tq \sum_{ij\in C}x_{ij}\leq |C|-\sum_{ij\in C}d_{ij}\  \textup{for all}\ C\in\mathcal C_1\right\}.$$
\end{proposition}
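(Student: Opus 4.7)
The plan is to apply Lemma~\ref{lem:projrequiem} essentially as a black box. The inequalities defining $Q_\mtz(d)$ can be rewritten in the exact form required by the lemma, after which the projection is immediate.

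First I would rewrite each defining inequality $u_i - u_j + d_{ij} \leq 1 - x_{ij}$ of $Q_\mtz(d)$ as
$$u_i - u_j + x_{ij} \leq 1 - d_{ij},$$
so that it matches the template $u_i - u_j + \sum_{kl\in A}\alpha^{ij}_{kl}x_{kl} \leq \beta^{ij}$ from Lemma~\ref{lem:projrequiem} by taking $\alpha^{ij}_{kl} = 1$ if $kl=ij$ and $0$ otherwise, together with $\beta^{ij} = 1 - d_{ij}$. With $X = P_\ap$, the set $Q$ in the statement of the lemma coincides with $Q_\mtz(d)$.

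Then I would invoke Lemma~\ref{lem:projrequiem} to conclude that
$$\proj_x(Q_\mtz(d)) = \left\{ x \in P_\ap \tq \sum_{ij\in C} x_{ij} \leq \sum_{ij\in C}(1 - d_{ij}) \ \textup{for all}\ C \in \mathcal{C}_1 \right\},$$
and simplify the right-hand side using $\sum_{ij\in C}(1-d_{ij}) = |C| - \sum_{ij\in C} d_{ij}$, which yields exactly the claimed description of $P_\mtz(d)$.

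There is essentially no obstacle here, since all the nontrivial content (the Farkas-style construction of $u$ as a shortest-path potential when no negative cycle exists) is already encapsulated in Lemma~\ref{lem:projrequiem}. The only thing to double-check is that the coefficient rewriting preserves the constraint system verbatim, which it does, and that the indexing of arcs (with $ij$ ranging over $A_1$ and cycles over $\mathcal C_1$) matches the hypotheses of the lemma, which it also does by our notational conventions.
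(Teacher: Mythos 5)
Your proof is correct and takes exactly the same route as the paper's: both apply Lemma~\ref{lem:projrequiem} with $X=P_\ap$, $\alpha^{ij}_{ij}=1$, $\alpha^{ij}_{kl}=0$ for $kl\neq ij$, and $\beta^{ij}=1-d_{ij}$, then simplify $\sum_{ij\in C}(1-d_{ij})=|C|-\sum_{ij\in C}d_{ij}$.
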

\begin{proof}
It follows from Lemma \ref{lem:projrequiem} by taking $X=P_\ap$ and defining $\alpha^{ij}$ for each $ij\in A$ by $\alpha^{ij}_{ij}=1$, $\alpha^{ij}_{kl}=0$ for $kl\neq ij$, and $\beta^{ij}=1-d_{ij}$.
\end{proof}

 These inequalities generalize the \emph{Weak circuit inequalities} \eqref{eq:weak_clique_ineq}. Note that $d\in \overline D$ only affects the right-hand side in the inequalities defining $P_\mtz(d)$ and that, since $\sum_{ij\in C}d_{ij}\leq 1$, each constraint in $P_\mtz(d)$ is dominated by the corresponding circuit inequality.


\begin{proposition}\label{prop:facetsPmtz}
For each $d\in\overline D$ and $C\in\mathcal C_1$, the inequality $\sum_{ij\in C}x_{ij}\leq |C|-\sum_{ij\in C}d_{ij}$ defines a facet of $P_\mtz(d)$ if and only if $\sum_{ij\in C}d_{ij}>0$ and $|C|\leq n-2$.
\end{proposition}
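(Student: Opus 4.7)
The plan is to prove the two directions of the equivalence separately; throughout, let $V(C)\subseteq N_1$ denote the set of nodes traversed by the cycle $C$.

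For the ``only if'' direction, I treat two cases. Case $\sum_{ij\in C}d_{ij}=0$: since $d\in\overline D\subseteq\RR_+^{A_1}$, we have $d_{ij}=0$ for every $ij\in C$, so the inequality reads $\sum_{ij\in C}x_{ij}\leq|C|$, which is dominated by the bounds $x_{ij}\leq 1$ valid on $P_\ap$; equality forces $x_{ij}=1$ on all $|C|\geq 2$ arcs of $C$, so the face has dimension at most $\dim P_\mtz(d)-|C|<\dim P_\mtz(d)-1$. Case $|C|=n-1$: then $V(C)=N_1$, and the assignment equations yield $\sum_{ij\in C}x_{ij}\leq\sum_{ij\in A_1}x_{ij}=n-2$, which is at most $n-1-\sum_{ij\in C}d_{ij}$; if $\sum_{ij\in C}d_{ij}<1$ the inequality is strict everywhere and the face is empty, and if $\sum_{ij\in C}d_{ij}=1$ equality forces $x_{ij}=0$ for all $(n-1)(n-3)$ arcs in $A_1\setminus C$, yielding face dimension $n-2<n^2-3n$ for $n\geq 4$.

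For the ``if'' direction, fix $C$ with $\sum_{ij\in C}d_{ij}>0$ and $|C|\leq n-2$, and let $F$ denote the face. To bound $\dim F$ from above, I would first observe that the uniform point $x^0=\tfrac{1}{n-1}\ones$ satisfies every MTZ inequality strictly, since $\tfrac{|C'|}{n-1}<\tfrac{|C'|(n-2)}{n-1}\leq|C'|-\sum_{ij\in C'}d_{ij}$ for $|C'|\geq 2$ and $n\geq 4$, which gives $\dim P_\mtz(d)=\dim P_\ap=n^2-3n+1$. Next, the equation $\sum_{ij\in C}x_{ij}=|C|-\sum_{ij\in C}d_{ij}$ is linearly independent from the assignment equations: any vector in their span takes the form $c_{ij}=\alpha_i+\beta_j$, and equating with $\ones_C$ forces $\alpha_{i_{k+1}}-\alpha_{i_k}=1$ along consecutive nodes of $C$, which telescopes around the cycle to the contradiction $|C|=0$. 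Hence $\dim F\leq n^2-3n$.

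For the matching lower bound, I would exhibit a point in the relative interior of $F$. Let $H'$ be any Hamiltonian cycle on $\{1\}\cup(N_1\setminus V(C))$ (well-defined since this set has at least two nodes when $|C|\leq n-2$), set $x^1=\ones_C+\ones_{H'}\in P_\ap$, and define $x^*=(1-\mu)x^0+\mu x^1$ with $\mu=1-\tfrac{(n-1)\sum_{ij\in C}d_{ij}}{|C|(n-2)}\in(0,1)$, chosen so that $\sum_{ij\in C}x^*_{ij}=|C|-\sum_{ij\in C}d_{ij}$. Since $\mu<1$ and $x^0>0$ componentwise, so is $x^*$. The crucial step is to verify strict inequality $\sum_{ij\in C'}x^*_{ij}<|C'|-\sum_{ij\in C'}d_{ij}$ for every other $C'\in\mathcal C_1$: the support of $x^1$ is the union of the two vertex-disjoint cycles $C$ and $H'$, so any $C'\neq C$ contained arcwise in this support would have to coincide either with $C$ (contradicting $C'\neq C$) or with $H'$ (impossible since $H'$ uses node $1\notin N_1$); hence $\sum_{ij\in C'}x^1_{ij}\leq|C'|-1\leq|C'|-\sum_{ij\in C'}d_{ij}$, and combining with the strict inequality for $x^0$ gives a strict inequality for the convex combination. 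Thus $x^*$ lies in the relative interior of $F$, forcing $\dim F=n^2-3n=\dim P_\mtz(d)-1$, so the inequality is facet-defining.

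The main obstacle is the uniform strict-inequality verification over all exponentially many cycles $C'\neq C$; the structural choice of $x^1$ as a disjoint union of two cycles (one on $V(C)$ and one touching node $1$) is what prevents any distinct $C'\in\mathcal C_1$ from being fully contained in the support of $x^1$, and thereby makes the combinatorial argument uniform in $C'$.
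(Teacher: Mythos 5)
Your proof follows essentially the same route as the paper's: the paper likewise uses the uniform point $\tfrac{1}{n-1}\ones$ (which satisfies every cycle inequality strictly) together with the integral witness $\ones_{C}+\ones_{\bar C}$, where $\bar C$ is a cycle on the nodes not covered by $C$ (necessarily through node $1$), and concludes via the principle that a non-redundant inequality in a description all of whose other constraints can be met strictly must be facet-defining; you simply make that principle explicit through the convex combination $x^*$ and the dimension count, and your disjoint-cycle argument that no $C'\neq C$ in $\mathcal C_1$ lies in the support of $x^1$ is exactly the content of the paper's non-redundancy claim. One intermediate inequality is wrong, though: $\tfrac{|C'|(n-2)}{n-1}\leq |C'|-\sum_{ij\in C'}d_{ij}$ is equivalent to $\sum_{ij\in C'}d_{ij}\leq \tfrac{|C'|}{n-1}$, which fails for general $d\in\overline D$ (e.g.\ a $2$-cycle carrying total $d$-weight $1$). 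The conclusion you need, $\tfrac{|C'|}{n-1}<|C'|-\sum_{ij\in C'}d_{ij}$, is still true and is obtained as in the paper from $|C'|-\sum_{ij\in C'}d_{ij}\geq |C'|-1>\tfrac{|C'|}{n-1}$, using $\sum_{ij\in C'}d_{ij}\leq 1$, $|C'|\geq 2$ and $n\geq 4$. With that repair (and noting that what really yields $\dim F=\dim P_\mtz(d)-1$ is not membership in $\ri(F)$ per se but the fact that $x^*$ satisfies every constraint other than the $C$-inequality strictly, so $F$ contains a relative neighborhood of $x^*$ in the hyperplane intersected with the affine hull), the argument is sound.
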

\begin{proof}

First observe that given $d\in\overline D$, the vector $\frac{1}{n-1}\ones \in P_\mtz(d)$  satisfies all the inequalities of $P_\mtz(d)$ of the form $\sum_{ij\in C}x_{ij}\leq |C|-\sum_{ij\in C}d_{ij}$ strictly since
$$\sum_{ij\in C}\frac{1}{n-1}=\frac{|C|}{n-1}<|C|-1\leq |C|-\sum_{ij\in C}d_{ij},$$
where the first inequality follows from $n\geq 4$ and $|C|\geq 2$. Therefore, such a constraint defines a facet of $P_\mtz(d)$ if and only if it is non-redundant, that is, removing it from the system defining $P_\mtz(d)$ does add new vectors.

Given $\hat C\in\mathcal C_1$ with $\sum_{ij\in \hat C}d_{ij}>0$ and $|\hat C|\leq n-2$, let $\bar C\subseteq A$ be a directed cycle on the nodes of $N$ not covered by $\hat C$ and consider $\hat x=\ones_{\hat C}+\ones_{\bar C}\in\{0,1\}^A$. We have that $\hat x\in P_\ap$ and satisfies all the constraints in the definition of $P_\mtz(d)$, with the exception of that associated to $\hat C$ as $\sum_{ij\in \hat C}\hat x_{ij}=|\hat C|$. Therefore, this constraint is non-redundant and thus it defines a facet of $P_\mtz(d)$. 

If $\sum_{ij\in C}d_{ij}=0$, then the corresponding inequality is implied by $x_{ij}\leq 1$ for $ij\in C$. If $|C|=n-1$, the corresponding inequality is redundant (see Observation \ref{obs_c1}).
\end{proof}

\subsection{Comparing $P_\mtz(d)$ for different $d\in\overline{D}$}

The following results show that $d$-MTZ formulations defined by vectors in $\interior(\overline{D})$, the topological interior of $\overline{D}$, can be strictly dominated by a $d$-MTZ formulation defined by another vector in $\interior(\overline{D})$, and that, on the other hand, two different $d$-MTZ formulations defined by vectors in the boundary  of  $\overline{D}$ might be incomparable.

\begin{proposition}\label{prop:d_better_d'} The following statements are true:
\begin{enumerate}
\item For all $d\in \textup{int}(\overline{D})$, there exists $d'\in \textup{int}(\overline{D})$ such that $P_\mtz(d')\subsetneq P_\mtz(d)$.
    
    \item Let $C,C'\in\mathcal C_1$ with $|C|,|C'|\leq n-2$, $C\neq C'$ and $d,d'\in \overline{D}$ in the relative interior of the facets of $\overline{D}$ defined by $C,C'$ respectively. Then $P_\mtz(d')$ and $P_\mtz(d)$ are not comparable. 
\end{enumerate}

\end{proposition}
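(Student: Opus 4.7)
The approach rests on the explicit description of $P_\mtz(d)$ from Proposition~\ref{prop:Pmtz(d)}. In every defining inequality $\sum_{kl\in C}x_{kl}\leq |C|-\sum_{kl\in C}d_{kl}$ the parameter $d$ appears only on the right-hand side, so componentwise inequalities between vectors in $\overline{D}$ translate immediately into set inclusions: whenever $d'\geq d$ componentwise, $P_\mtz(d')\subseteq P_\mtz(d)$. Both parts of the proposition then reduce to identifying a suitable perturbation of the right-hand sides together with a facet of one polytope that witnesses strictness or incomparability, where the facet description from Proposition~\ref{prop:facetsPmtz} will do the bookkeeping.

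For part~(1), pick any $d\in\interior(\overline{D})$ and any arc $ij\in A_1$, and define $d'$ by setting $d'_{ij}=d_{ij}+\varepsilon$ and $d'_{kl}=d_{kl}$ for $kl\neq ij$, with $\varepsilon>0$. Since $\overline{D}$ is cut out by finitely many linear inequalities all strict at $d$, for $\varepsilon$ small enough $d'$ remains in $\interior(\overline{D})$. The componentwise inequality $d'\geq d$ gives $P_\mtz(d')\subseteq P_\mtz(d)$. To obtain strict inclusion consider the $2$-cycle $C=\{ij,ji\}\in\mathcal{C}_1$, whose length satisfies $|C|=2\leq n-2$ because $n\geq 4$. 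By Proposition~\ref{prop:facetsPmtz} the $C$-inequality is facet-defining for $P_\mtz(d)$, so its associated facet is non-empty; any point on it lies in $P_\mtz(d)$ but violates the strictly tighter $C$-inequality of $P_\mtz(d')$, since $d'_{ij}+d'_{ji}>d_{ij}+d_{ji}$.

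For part~(2), by symmetry it suffices to exhibit a point of $P_\mtz(d)\setminus P_\mtz(d')$. Because $d$ lies in the relative interior of the facet of $\overline{D}$ defined by $C$, we have $\sum_{kl\in C}d_{kl}=1$ while every other defining inequality of $\overline{D}$ is strict at $d$; in particular $d_{kl}>0$ for every $kl\in A_1$, and $\sum_{kl\in C'}d_{kl}<1$ since $C'\neq C$. Analogously, $\sum_{kl\in C'}d'_{kl}=1$. Hence the $C'$-constraint of $P_\mtz(d')$ reads $\sum_{kl\in C'}x_{kl}\leq |C'|-1$, whereas the corresponding constraint of $P_\mtz(d)$ has strictly larger right-hand side $|C'|-\sum_{kl\in C'}d_{kl}>|C'|-1$. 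By Proposition~\ref{prop:facetsPmtz}, the $C'$-inequality is facet-defining for $P_\mtz(d)$ because $\sum_{kl\in C'}d_{kl}>0$ and $|C'|\leq n-2$, so the corresponding facet is non-empty and any point on it lies in $P_\mtz(d)$ yet violates the $C'$-constraint of $P_\mtz(d')$.

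The only issue that deserves care is ensuring that the perturbation in part~(1) keeps $d'$ in $\interior(\overline{D})$ and that the facets invoked in both parts are non-empty. Both are immediate: the first follows from $\overline{D}$ being cut out by finitely many inequalities all strict at an interior point, and the second follows from Proposition~\ref{prop:facetsPmtz} together with the fact that every point in the relative interior of a facet of $\overline{D}$ has strictly positive components and satisfies every non-tight defining inequality of $\overline{D}$ strictly.
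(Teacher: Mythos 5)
Your proof is correct and follows essentially the same route as the paper: both parts rest on the explicit description of $P_\mtz(d)$ from Proposition~\ref{prop:Pmtz(d)} and the facet characterization of Proposition~\ref{prop:facetsPmtz}, with a facet of one polytope serving as the witness for strictness or incomparability. The only difference is cosmetic: in part (1) you perturb a single coordinate and use the $2$-cycle $\{ij,ji\}$ as the witnessing facet, whereas the paper perturbs all coordinates uniformly by a small $\epsilon$; both choices work for the same reason.
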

\begin{proof}\text{}
\begin{enumerate}

  \item Let $d\in \textup{int}(\overline{D})$. Then, for all $C\in \mathcal{C}_1$, $\sum_{ij\in C}d_{ij}< 1$.
    Let $\epsilon=\frac{1}{2}\min\{(1-\sum_{ij\in C}d_{ij})/|C|\tq C\in \mathcal{C}_1\}>0$ and define $d'=d+\epsilon \ones$, where $\ones\in \RR^{A_1}$. Observe that for all $C\in \mathcal{C}_1$
    $$\sum_{ij\in C}d'_{ij}= \sum_{ij\in C}d_{ij} +|C|\epsilon < \sum_{ij\in C}d_{ij} +|C|\left(1-\sum_{ij\in C}d_{ij}\right)/|C|=1.$$
    Therefore, $d'\in\textup{int}(\overline{D})$ and, moreover
    \begin{align*}
        P_\mtz(d')&=\{x\in P_{\text{AP}}\tq \sum_{ij\in C}x_{ij}\leq |C|-\sum_{ij\in C}d'_{ij}\quad \textup{for all } C\in\mathcal C_1\}\\
        &\subsetneq \{x\in P_{\text{AP}}\tq \sum_{ij\in C}x_{ij}\leq |C|-\sum_{ij\in C}d_{ij}\quad \textup{for all } C\in\mathcal C_1\}\\
        &=P_\mtz(d),
    \end{align*}

 The strict inclusion follows from the characterizations of the facets of $P_\mtz(d')$ (see Proposition~\ref{prop:facetsPmtz}) and the fact that $\sum_{ij\in C}d_{ij}< \sum_{ij\in C}d'_{ij}$ for all $C\in \mathcal{C}_1$.

    \item Since $d$ is in the relative interior of the facet of $\overline{D}$ defined by $C$, we have that $1=\sum_{ij\in C}d_{ij}>\sum_{ij\in C'}d_{ij}>0$. Similarly, for $d'$ we have that $1=\sum_{ij\in C'}d'_{ij}>\sum_{ij\in C}d'_{ij}>0$. Since $|C|,|C'|\leq n-2$ by Proposition \ref{prop:facetsPmtz} we obtain that the inequalities $\sum_{ij\in C}x_{ij}\leq |C|-1$ and  $\sum_{ij\in C'}x_{ij}\leq |C'|-\sum_{ij\in C'}d_{ij}$ define facets of $P_\mtz(d)$ and that the inequalities $\sum_{ij\in C'}x_{ij}\leq |C'|-1$ and  $\sum_{ij\in C}x_{ij}\leq |C|-\sum_{ij\in C}d'_{ij}$ define facets of $P_\mtz(d)$. This implies that $P_\mtz(d)\nsubseteq P_\mtz(d')$ since any point in the relative interior of the facet $\sum_{ij\in C'}x_{ij}\leq |C'|-\sum_{ij\in C'}d_{ij}$ of $P_\mtz(d)$ it is cut off by the inequality $\sum_{ij\in C'}x_{ij}\leq |C'|-1$ of $P_\mtz(d')$. Similarly, we conclude that $P_\mtz(d')\nsubseteq P_\mtz(d)$, and thus, these two formulations are not comparable. 
\end{enumerate}
\end{proof}

Recall the vector $d^{\mtz}\in D$ defined as $d^{\mtz}_{ij}=\frac{1}{n-1}$ for $ij\in A_1$ which gives the (normalized) MTZ formulation. By Proposition \ref{prop:Pmtz(d)}, we obtain
\begin{align}
P_\mtz(d^{\mtz})&=\left\{x\in P_\ap\tq \sum_{ij\in C}x_{ij}\leq |C|-\sum_{ij\in C}d^{\mtz}_{ij}\  \textup{for all}\ C\in\mathcal C_1\right\}\nonumber\\
&=\left\{x\in P_\ap\tq \sum_{ij\in C}x_{ij}\leq |C|-\frac{|C|}{n-1}\  \textup{for all}\ C\in\mathcal C_1\right\}.\nonumber
\end{align}

The following proposition shows that no anti-symmetric perturbation of $d^{\mtz}$ can improve the associated $d$-MTZ formulation.

\begin{proposition}
   Let $\delta\in\RR^{A_1}$ be such that $d^{\mtz}+\delta\in D$ and $\delta_{ij}=-\delta_{ji}$ for any $ij\in A_1$. Then $P_\mtz(d^{\mtz}+\delta)\subseteq P_\mtz(d^{\mtz})$ implies $P_\mtz(d^{\mtz}+\delta)=P_\mtz(d^{\mtz})$.
\end{proposition}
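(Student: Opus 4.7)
The plan is to show that the inclusion $P_\mtz(d^\mtz+\delta)\subseteq P_\mtz(d^\mtz)$ forces $\sum_{ij\in C}\delta_{ij}=0$ for every cycle $C\in\mathcal{C}_1$ with $|C|\leq n-2$. Once this is established, Proposition~\ref{prop:Pmtz(d)} shows that all non-redundant cycle inequalities describing the two polyhedra have identical right-hand sides, whence the two formulations coincide. Throughout, write $d:=d^\mtz+\delta\in D$ and observe that antisymmetry of $\delta$ together with $\sum_{ij\in C}d^\mtz_{ij}=|C|/(n-1)=\sum_{ij\in\revcycle{C}}d^\mtz_{ij}$ yields the key identity $\sum_{ij\in\revcycle{C}}d_{ij}=\frac{|C|}{n-1}-\sum_{ij\in C}\delta_{ij}$.

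First, I would dispose of cycles of length $n-1$. Using \eqref{AssignPolytope1}, \eqref{AssignPolytope2} and $x\ge 0$, Observation~\ref{obs_c1} gives $\sum_{ij\in A_1}x_{ij}=n-2$, so for any such cycle $C$ and any $d'\in\overline{D}$ one has $\sum_{ij\in C}x_{ij}\leq n-2=|C|-1\leq|C|-\sum_{ij\in C}d'_{ij}$, because $\sum_{ij\in C}d'_{ij}\leq 1$. Hence the inequalities from Proposition~\ref{prop:Pmtz(d)} associated to cycles of length $n-1$ are redundant in both $P_\mtz(d^\mtz)$ and $P_\mtz(d)$, and only cycles with $|C|\leq n-2$ need to be tracked.

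Now suppose, for contradiction, that some $C\in\mathcal{C}_1$ with $|C|\leq n-2$ satisfies $\sum_{ij\in C}\delta_{ij}\neq 0$. After possibly swapping $C$ and $\revcycle{C}$ (which has the same length), we may assume $\sum_{ij\in C}\delta_{ij}>0$, so by the identity above $\sum_{ij\in\revcycle{C}}d_{ij}<|C|/(n-1)$. Since $d$ has strictly positive entries we have $\sum_{ij\in\revcycle{C}}d_{ij}>0$, and $|\revcycle{C}|=|C|\leq n-2$, so Proposition~\ref{prop:facetsPmtz} guarantees that
$$\sum_{ij\in\revcycle{C}}x_{ij}\leq|C|-\sum_{ij\in\revcycle{C}}d_{ij}$$
defines a facet of $P_\mtz(d)$. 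Picking any $x^*$ on this facet produces $x^*\in P_\mtz(d)$ with
$$\sum_{ij\in\revcycle{C}}x^*_{ij}=|C|-\sum_{ij\in\revcycle{C}}d_{ij}>|C|-\frac{|C|}{n-1},$$
which violates the $d^\mtz$-MTZ inequality indexed by $\revcycle{C}$ in the description of $P_\mtz(d^\mtz)$. This contradicts the hypothesis $P_\mtz(d)\subseteq P_\mtz(d^\mtz)$, so $\sum_{ij\in C}\delta_{ij}=0$ for every cycle with $|C|\leq n-2$, and combined with the first step the two explicit descriptions of $P_\mtz(d)$ and $P_\mtz(d^\mtz)$ coincide.

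The main obstacle is the facet-based construction in the third paragraph: the argument crucially requires that the $\revcycle{C}$-inequality actually cuts out a facet of $P_\mtz(d)$, which in turn needs both the strict positivity of the entries of $d$ (to get $\sum_{ij\in\revcycle{C}}d_{ij}>0$) and the length bound $|\revcycle{C}|\leq n-2$ to invoke Proposition~\ref{prop:facetsPmtz}. Everything else is bookkeeping with the projection formula of Proposition~\ref{prop:Pmtz(d)} and the assignment identity $\sum_{ij\in A_1}x_{ij}=n-2$.
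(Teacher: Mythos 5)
Your proof is correct, and it takes a genuinely different and shorter route than the paper's. The paper also reduces the statement to showing $\sum_{ij\in C}\delta_{ij}=0$ for every $C\in\mathcal C_1$ with $|C|\leq n-2$, but it gets there by a descending induction on cycle length: starting from $\sum_{ij\in C}\delta_{ij}\geq 0$ for $|C|\leq n-2$ (extracted from the inclusion via Propositions~\ref{prop:Pmtz(d)} and~\ref{prop:facetsPmtz}) and $\sum_{ij\in C}\delta_{ij}\leq 0$ for $|C|=n-1$ (from $d^{\mtz}+\delta\in D$), it replaces an arc $lk$ of a cycle by a two-arc detour $lv,vk$ and uses antisymmetry on the resulting triangle to push the vanishing of the cycle sums down from length $m$ to length $m-1$. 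You instead exploit that $\revcycle{C}\in\mathcal C_1$ has the same length as $C$ and that antisymmetry flips the sign of $\sum\delta$ under reversal, so any cycle with $\sum_{ij\in C}\delta_{ij}>0$ produces a reversed cycle whose $d$-inequality is strictly weaker than the corresponding $d^{\mtz}$-inequality; a point on that facet of $P_\mtz(d^{\mtz}+\delta)$ (nonempty by Proposition~\ref{prop:facetsPmtz}, since $d^{\mtz}+\delta>0$ and $|\revcycle{C}|\leq n-2$) then escapes $P_\mtz(d^{\mtz})$, contradicting the inclusion. This eliminates the induction entirely; indeed, once one has the right-hand-side characterization of the inclusion, your observation amounts to noting that $\sum_{ij\in C}\delta_{ij}\geq 0$ applied to both $C$ and $\revcycle{C}$ already forces equality to zero. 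Your handling of the $(n-1)$-cycles via Observation~\ref{obs_c1} and the final assembly of the two descriptions are also correct.
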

\begin{proof}
First, since $d^{\mtz}+\delta\in D$ we have that for any $C\in \mathcal{C}_1$ the following inequality holds 
\begin{equation}
    \sum_{ij\in C}\left(d_{ij}^{\mtz}+\delta_{ij}\right)\leq 1\label{in_assumption}
\end{equation} 

On the other hand, since $d^{\mtz},d^{\mtz}+\delta\in D$ by the characterizations of $P_\mtz(d)$ (Proposition~\ref{prop:Pmtz(d)}) and their facets (Proposition 
~\ref{prop:facetsPmtz}) we have that  $P_\mtz(d^{\mtz}+\delta)\subseteq P_\mtz(d^{\mtz})$ if and only if 
 $$|C|-\sum_{ij\in C} \left(d^{\mtz}_{ij}+\delta_{ij}\right)\leq|C|-\sum_{ij\in C}d^{\mtz}_{ij}$$ 
\noindent for any $C\in \mathcal{C}_1$ such that $|C|\leq n-2$, as the facets of $P_\mtz(d^{\mtz})$ are defined by the inequalities $\sum_{ij\in C}x_{ij}\leq |C|-\sum_{ij\in C}d^{\mtz}_{ij}$, and similarly for $P_\mtz(d^{\mtz}+\delta)$. This condition is equivalent to
\begin{equation}
\sum_{ij\in C}\delta_{ij}\geq0\label{subset_assumption}
\end{equation}
\noindent for any $C\in \mathcal{C}_1$ such that $|C|\leq n-2$. 

From this discussion, it follows that $P_\mtz(d^{\mtz}+\delta)=P_\mtz(d^{\mtz})$ if and only if we have that
$\sum_{ij\in C}\delta_{ij}=0$ for any $C\in \mathcal{C}_1$ with $|C|\leq n-2$. We will use this equivalence later in the proof.

Observe that by the anti-symmetry assumption on $\delta$, that is, $\delta_{ij}=-\delta_{ji}$ for any $ij\in A_1$, we have that $\sum_{ij\in  C}\delta_{ij}=0$ for any $ C \in \mathcal{C}_1$ with $| C|=2$. Also, by \eqref{in_assumption}, note that any cycle $ C \in \mathcal{C}_1$ with $| C|=n-1$ satisfies $\sum_{ij\in  C}\delta_{ij}\leq 1-\frac{| C|}{n-1}=0$.

In order to show that $\sum_{ij\in  C}\delta_{ij}=0$ for any cycle $ C \in \mathcal{C}_1$ with $2<| C|<n-1$, we will show that given $m$ with $3<m\leq n-1$, if each $ C \in \mathcal{C}_1$ with $| C|=m$ satisfies $\sum_{ij\in  C}\delta_{ij}=0$, then any $ C \in \mathcal{C}_1$ with $| C|=m-1$ also satisfies $\sum_{ij\in  C}\delta_{ij}=0$. Let $C\in\mathcal C_1$ with $| C|=m-1$. Since $3<m\leq n-1$, there exist $lk\in A_1$ an edge of $ C$ and $v\in N_1$ a node not in $ C$ such that the cycle $C'= C\setminus \{lk\} \cup \{lv,vk\}$ has size $|C'|=m$. By using inequalities \eqref{subset_assumption} for the cycles $\{kl,lv,vk\}$ and $ C$, and the assumption that the property is true for any cycle of size $m$, we obtain that
\begin{equation}
\delta_{kl}+\delta_{lv}+\delta_{vk}\geq0, \quad
\sum_{ij\in  C}\delta_{ij}\geq0, \quad
\sum_{ij\in C'}\delta_{ij}=0.    \label{induction_hypothesis}
\end{equation}
Moreover, by definition of $C'$ we can compute
 $$\sum_{ij\in C'}\delta_{ij}=
    \sum_{ij\in  C}\delta_{ij}-\delta_{lk}+\delta_{lv}+\delta_{vk}.$$
Since $\delta_{lk}=-\delta_{kl}$ by the anti-symmetric assumption and \eqref{induction_hypothesis} we obtain that
$$0= \sum_{ij\in C'}\delta_{ij}=\sum_{ij\in  C}\delta_{ij}+(\delta_{kl}+\delta_{lv}+\delta_{vk}).$$
As the two terms in the sum are non-negative, we conclude $\sum_{ij\in  C}\delta_{ij}=0.$ This shows that $P_\mtz(d^{\mtz}+\delta)=P_\mtz(d^{\mtz})$, as desired.

\end{proof}

\subsection{The intersection of all $d$-MTZ formulations} \label{sec:closure_d-MTZformulations}

Given nonempty $D'\subseteq \overline D$, recall the set
$$\Ef(Q_\mtz(D'))=\left\{(x,u)\in P_\ap\times\left(\RR^{N_1}\right)^{D'}\tq (x,u(d))\in Q_\mtz(d)\ \forall d\in D'\right\}.$$

\begin{proposition}\label{prod_mtz}
    Let $D'\subseteq \overline D$. If for each $C\in\mathcal C_1$, there exists $d\in D'$ such that $\sum_{ij\in C}d_{ij}>0$, then no integer $x\in \proj_x(\Ef(Q_\mtz(D')))$ can define subtours. 
    In particular, if $D'$ is finite, then $\Ef(Q_\mtz(D'))$ is a valid extended formulation for the ATSP.
\end{proposition}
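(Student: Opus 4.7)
The plan is to reduce both claims to statements about the closure $\bigcap_{d\in D'} P_\mtz(d)$ in the $x$-space, using the explicit cycle-based description of each $P_\mtz(d)$ given by Proposition~\ref{prop:Pmtz(d)}. First I would invoke Lemma~\ref{lemma:operators_relation}, whose proof is purely algebraic and so extends to any $D' \subseteq \overline D$ regardless of whether each individual $P_\mtz(d)$ is a valid formulation, to obtain
\[\proj_x(\Ef(Q_\mtz(D'))) \;=\; \bigcap_{d\in D'} P_\mtz(d).\]
Consequently, any integer $x$ in the projection satisfies the restricted assignment equations together with $\sum_{ij\in C} x_{ij} \leq |C| - \sum_{ij\in C} d_{ij}$ for every $C \in \mathcal C_1$ and every $d \in D'$.

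For the no-subtour claim I would argue by contradiction. If such an integer $x$ has support containing a subtour, then by the assignment equations its support decomposes into node-disjoint directed cycles covering $N$; since a subtour is present, there are at least two cycles in the decomposition. Node $1$ lies in exactly one of them, so another cycle $C$ of the decomposition is entirely contained in $N_1$, i.e., $C \in \mathcal C_1$, and hence $\sum_{ij\in C} x_{ij} = |C|$. Choosing $d \in D'$ with $\sum_{ij\in C} d_{ij} > 0$, as guaranteed by the hypothesis, produces the contradiction $|C| \leq |C| - \sum_{ij \in C} d_{ij} < |C|$.

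For the ``in particular'' statement I would additionally verify that every tour's characteristic vector lies in $\proj_x(\Ef(Q_\mtz(D')))$. For $d \in \overline D$ and $C \in \mathcal C_1$, a tour cannot include every arc of $C$: that would already exhaust the in- and out-degrees of every node of $C$, so the tour would be disjoint from node $1 \notin C$ and fail to be Hamiltonian. Therefore $\sum_{ij\in C} x_{ij} \leq |C|-1 \leq |C| - \sum_{ij\in C} d_{ij}$, placing $x$ in $P_\mtz(d)$ for every $d \in D'$. Lemma~\ref{lem:projrequiem} applied separately to each parameter then yields a witness $u(d) \in \RR^{N_1}$ with $(x, u(d)) \in Q_\mtz(d)$, and the finiteness of $D'$ ensures that the assembled tuple $(x, (u(d))_{d \in D'})$ is a bona fide point of the finite-dimensional polyhedron $\Ef(Q_\mtz(D'))$.

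The only subtlety I foresee is that an individual $d \in \overline D \setminus D$ may fail to yield a valid formulation on its own, since some sum $\sum_{ij\in C} d_{ij}$ could vanish and then subtour $C$ would not be excluded by that parameter alone; the coverage hypothesis across $D'$ is precisely what rescues the intersection, and this cycle-by-cycle matching between subtours and parameters is what the contradiction step is designed to exploit.
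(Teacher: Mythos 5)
Your proposal is correct and follows essentially the same route as the paper: identify $\proj_x(\Ef(Q_\mtz(D')))$ with $\bigcap_{d\in D'}P_\mtz(d)$, invoke the cycle-inequality description of Proposition~\ref{prop:Pmtz(d)}, and observe that the coverage hypothesis forces $\sum_{ij\in C}x_{ij}<|C|$ for every $C\in\mathcal C_1$, which rules out subtours in the cycle decomposition of any integer point of $P_\ap$. Your explicit verification that every tour lies in the projection (and hence lifts to a point of the finite-dimensional $\Ef(Q_\mtz(D'))$) is a detail the paper leaves implicit in ``the last assertion follows,'' but it is the right justification.
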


\begin{proof}
    We have that $\proj_x(\Ef(Q_\mtz(D')))=\bigcap_{d\in D'}\proj_x(Q_\mtz(d))$. Therefore, by Proposition \ref{prop:Pmtz(d)}, for any $x\in\proj_x(\Ef(Q_\mtz(D')))$ and $C\in\mathcal C_1$, it holds that $\sum_{ij\in C}x_{ij}\leq |C|-\sum_{ij\in C}d_{ij}$ for all $d\in D'$, or equivalently, $\sum_{ij\in C}x_{ij}\leq |C|-\max\{\sum_{ij\in C}d_{ij}\tq d\in D'\}$. By hypothesis, the right-hand side is at least $|C|-1$, but less than $|C|$. Thus, no integer $x\in \proj_x(\Ef (Q_\mtz(D')))$ can define subtours and the last assertion follows.
\end{proof}

Note that any $D'=\{d\}$ with $d\in D$ satisfies the above condition since $d>0$, in which case $Q_\mtz(D')=Q_\mtz(d)$. Also, observe that we can choose $D'$ contained in the boundary of $\overline D$ as long it satisfies the condition in Proposition~\ref{prod_mtz}. Following the above result, for each $k\in N_1$, let $d^k\in \RR^{A_1}_{+}$ be such that
$$d^k_{ij}=\left\{\begin{array}{cc}1&i=k\\ 0&i\neq k, \end{array}\right.$$
and define $V_\mtz=\{d^k\tq k\in N_1\}$. Note that for each $k\in N_1$ and $C\in \mathcal{C}_1$, we have $\sum_{ij\in C}d^k_{ij}=1$ if $kj\in C$ for some $j\in N_1$ and $\sum_{ij\in C}d^k_{ij}=0$ else. Therefore, $V_\mtz\subseteq \overline D$. Moreover, each element of $V_\mtz$ is a vertex of $\overline D$, although they do not encompass the complete set of vertices of $\overline D$ in general.

\begin{theorem}\label{cl_mtz}
We have that $\Cl(P_\mtz(D))=\Cl(P_\mtz(V_\mtz))=\overline P_\mtz$, where
 $$\overline P_\mtz = \left\{x\in P_{\text{AP}}\tq \sum_{ij\in C}x_{ij}\leq |C|-1\ \textup{for all } C\in\mathcal C_1\right\}.$$
\end{theorem}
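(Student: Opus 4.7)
The plan is to prove the theorem by establishing the chain of inclusions
$$\overline{P}_\mtz \subseteq \Cl(P_\mtz(D)) \subseteq \Cl(P_\mtz(V_\mtz)) \subseteq \overline{P}_\mtz.$$

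First, I would dispatch the outer inclusion $\overline{P}_\mtz \subseteq \Cl(P_\mtz(D))$ directly from Proposition~\ref{prop:Pmtz(d)}: for every $d \in D$ and every $C \in \mathcal{C}_1$, the defining property $\sum_{ij\in C} d_{ij} \leq 1$ gives $|C|-1 \leq |C| - \sum_{ij\in C} d_{ij}$, so any $x \in \overline{P}_\mtz$ satisfying the circuit inequality $\sum_{ij\in C} x_{ij} \leq |C|-1$ automatically satisfies the $P_\mtz(d)$ inequality. Hence $\overline{P}_\mtz \subseteq P_\mtz(d)$ for every $d \in D$.

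Next, I would prove the rightmost inclusion $\Cl(P_\mtz(V_\mtz)) \subseteq \overline{P}_\mtz$, which is the core combinatorial step. Fix $C \in \mathcal{C}_1$; since $C \subseteq A_1$, all nodes of $C$ lie in $N_1$. For any node $k \in N_1$ that belongs to $C$, exactly one arc of $C$ has $k$ as its tail, so $\sum_{ij\in C} d^k_{ij} = 1$. Consequently, $P_\mtz(d^k)$ contains the constraint $\sum_{ij\in C} x_{ij} \leq |C|-1$, and so does $\Cl(P_\mtz(V_\mtz))$. Ranging over all $C \in \mathcal{C}_1$ (each detected by some $k \in N_1$) yields every circuit inequality defining $\overline{P}_\mtz$.

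For the middle inclusion $\Cl(P_\mtz(D)) \subseteq \Cl(P_\mtz(V_\mtz))$, I would first observe that $D$ is convex (being the intersection of $\RR^{A_1}_{++}$ with the half-spaces $\{d \tq \sum_{ij\in C} d_{ij} \leq 1\}$) and that a standard perturbation argument, e.g., $d_\varepsilon = (1-\varepsilon) d + \varepsilon \cdot \frac{1}{|A_1|+1}\ones$, shows that the topological closure of $D$ is $\overline{D}$. Therefore $\overline{\conv(D)} = \overline{D}$, and part~(1) of Lemma~\ref{lem:robustprops} gives $\Cl(P_\mtz(D)) = \Cl(P_\mtz(\overline{D}))$. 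Since $V_\mtz \subseteq \overline{D}$ (as noted before the theorem), intersecting $P_\mtz(d)$ over the larger set $\overline{D}$ produces a subset, so $\Cl(P_\mtz(\overline{D})) \subseteq \Cl(P_\mtz(V_\mtz))$.

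The step that will require the most care is the third inclusion: one must notice that although $V_\mtz$ does not exhaust the vertices of $\overline{D}$, the simple structural fact that every cycle in $\mathcal{C}_1$ passes through some node of $N_1$ is enough for $V_\mtz$ to activate all circuit inequalities simultaneously. The other two inclusions are essentially comparisons of right-hand sides together with a single application of Lemma~\ref{lem:robustprops}.
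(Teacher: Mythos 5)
Your proposal is correct and follows essentially the same route as the paper: both establish the chain $\overline P_\mtz\subseteq \Cl(P_\mtz(D))=\Cl(P_\mtz(\overline D))\subseteq\Cl(P_\mtz(V_\mtz))\subseteq\overline P_\mtz$, using the right-hand-side comparison for the first inclusion, Lemma~\ref{lem:robustprops} for the middle one, and the observation that each cycle $C$ satisfies $\sum_{ij\in C}d^k_{ij}=1$ for any node $k$ of $C$ to recover all circuit inequalities from $V_\mtz$. No gaps.
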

\begin{proof}
Noting that $\overline P_\mtz\subseteq P_\mtz(d)$ for all $d\in D$, we have $\overline P_\mtz\subseteq \Cl(P_\mtz(D))=\Cl(P_\mtz(\overline D))\subseteq \Cl(P_\mtz(V_\mtz))$. Now, to show that $\Cl(P_\mtz(V_\mtz))=\overline P_\mtz$, observe that from Proposition~\ref{prop:Pmtz(d)}, for each $k\in N_1$ we have
\begin{align*}
P_\mtz(d^k)&=\left\{x\in P_{\text{AP}}\tq \sum_{ij\in C}x_{ij}\leq |C|-\sum_{ij\in C}d^k_{ij}\ \textup{for all } C\in\mathcal C_1\right\}\\
&=\left\{x\in P_{\text{AP}}\tq \begin{array}{rl}
\displaystyle\sum_{ij\in C}x_{ij}\leq |C|-1\ \textup{for all } C\in\mathcal C_1\tq C\cap\delta^+(k)\neq\emptyset\\
\displaystyle\sum_{ij\in C}x_{ij}\leq |C|\ \textup{for all } C\in\mathcal C_1\tq C\cap\delta^+(k)=\emptyset
\end{array}\right\}\\
&=\left\{x\in P_{\text{AP}}\tq \sum_{ij\in C}x_{ij}\leq |C|-1\ \textup{for all } C\in\mathcal C_1\tq C\cap\delta^+(k)\neq\emptyset\right\}.
\end{align*}
Therefore,
\begin{align*}
\Cl(P_\mtz(V_\mtz))&=\bigcap_{k\in N_1}P_\mtz(d^k)\\
&=\left\{x\in P_{\text{AP}}\tq \sum_{ij\in C}x_{ij}\leq |C|-1\ \textup{for all } C\in\mathcal C_1,\ \textup{for all}\ k\in N_1\tq C\cap\delta^+(k)\neq\emptyset\right\}\\
&=\left\{x\in P_{\text{AP}}\tq \sum_{ij\in C}x_{ij}\leq |C|-1\ \textup{for all } C\in\mathcal C_1\right\}=\overline P_\mtz.
\end{align*}
\end{proof}
We remark here that from Theorem~\ref{cl_mtz}, $\Cl(P_\mtz(D))$ can be obtained with $\mathcal O(n)$ vectors in $\overline D$ which are easy to identify, instead of requiring the complete set of vertices of $\overline D$ as in Corollary~\ref{prod_V}. Moreover, the latter is unlikely to be efficiently accomplished in view of Proposition~\ref{nphard}. 

Also, observe that $\Cl(P_\mtz(D))$ is given by the circuit inequalities \eqref{eq:circuit_ineq}, and therefore it is directly related to the RMTZ formulation. Corollary~\ref{cor_mtz} below recovers Theorem~1 in \cite{gouveia1999asymmetric}.

 \begin{corollary}\label{cor_mtz}
 We have that $\Cl(P_\mtz(D))=\proj_x(\overline Q_\mtz)$, where
$$\overline Q_\mtz=\left\{(x,(v^k\tq k\in N_1))\in P_\ap\times\RR^{N_1\times N_1}\tq \begin{array}{rl}
\displaystyle v^k_k-v^k_j\leq -x_{kj}& \textup{for all}\ k,j\in N_1\tq j\neq k\\
\displaystyle v^k_i-v^k_j\leq 1-x_{ij}& \textup{for all}\ k,i,j\in N_1\tq  i\neq k,\ j\neq i
\end{array}\right\}.$$
\end{corollary}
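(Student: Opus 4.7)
The plan is to chain together Theorem~\ref{cl_mtz} with Lemma~\ref{lemma:operators_relation} (or equivalently Corollary~\ref{prod_V}), and then simply unfold the resulting definition of $\Ef(Q_\mtz(V_\mtz))$ and rewrite it in the form displayed in the corollary.

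First, by Theorem~\ref{cl_mtz}, $\Cl(P_\mtz(D)) = \Cl(P_\mtz(V_\mtz))$. Since $V_\mtz$ is a finite set of parameters, Lemma~\ref{lemma:operators_relation} applied with $\PP = V_\mtz$ gives
$$\Cl(P_\mtz(V_\mtz)) = \proj_x(\Ef(Q_\mtz(V_\mtz))).$$
It therefore suffices to show that $\Ef(Q_\mtz(V_\mtz)) = \overline Q_\mtz$ after renaming the lifting variable associated to $d^k \in V_\mtz$ as $v^k \in \RR^{N_1}$.

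Now I would unfold $\Ef(Q_\mtz(V_\mtz))$: a tuple $(x,(v^k \tq k\in N_1)) \in P_\ap \times \RR^{N_1\times N_1}$ belongs to it iff, for every $k \in N_1$ and every $ij \in A_1$, we have $v^k_i - v^k_j + d^k_{ij} \leq 1 - x_{ij}$. Recalling the definition of $d^k$, namely $d^k_{ij} = 1$ if $i = k$ and $d^k_{ij} = 0$ otherwise, I would split this into two cases. When $i = k$ (so $j \in N_1 \setminus \{k\}$), the constraint becomes $v^k_k - v^k_j \leq -x_{kj}$; when $i \neq k$ (so $i \in N_1 \setminus \{k\}$ and $j \in N_1 \setminus \{i\}$), the constraint becomes $v^k_i - v^k_j \leq 1 - x_{ij}$. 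These are exactly the two systems of inequalities defining $\overline Q_\mtz$, so the two sets coincide, and projecting on $x$ yields the claim.

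There is no substantial obstacle here: once Theorem~\ref{cl_mtz} is in hand, the corollary is essentially bookkeeping that translates the indicator structure of the vertices $d^k$ into the split form of the inequalities in $\overline{Q}_\mtz$. The only ingredient worth emphasizing is that the use of Lemma~\ref{lemma:operators_relation} is what forces us to take one independent copy of the MTZ potentials $v^k$ per vertex $d^k$, as anticipated by Observation~\ref{obs:Q_different_u}; a common $u$ across all $k$ would not in general produce a valid formulation.
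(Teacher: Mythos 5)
Your proposal is correct and follows essentially the same route as the paper: Theorem~\ref{cl_mtz} reduces the closure over $D$ to the closure over $V_\mtz$, Lemma~\ref{lemma:operators_relation} identifies that closure with $\proj_x(\Ef(Q_\mtz(V_\mtz)))$, and the identification $\Ef(Q_\mtz(V_\mtz))=\overline Q_\mtz$ is exactly the bookkeeping the paper leaves implicit. Your explicit case split on $d^k_{ij}$ just spells out what the paper asserts with ``Noting that.''
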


\begin{proof}
Noting that $\overline Q_\mtz=\Ef\left(P_\mtz(V_\mtz)\right)$, the result follows from Theorem~\ref{cl_mtz} and Corollary~\ref{prod_V}.
 \end{proof}



\section{Parametric $d$-DL formulations}\label{sec:dl}

For $d\in\RR^{A_1}_{++}$, consider the generalization of the DL formulation
\begin{equation}\label{eq:genDL_ineq_M}
    u_i-u_j+Mx_{ij}+(M-d_{ij}-d_{ji})x_{ji}\leq M-d_{ij}\quad \textup{for all}\ ij\in A_1.
\end{equation}
As with the MTZ formulation, for large enough $M$, inequalities \eqref{eq:genDL_ineq_M} define a valid extended formulation for the ATSP that we call $d$-DL formulation: for $x_{ij}=0$ and $x_{ji}=0$, \eqref{eq:genDL_ineq_M} for $ij$ and $ji$ reduces to the superfluous constraints $u_i-u_j+d_{ij}\leq M$ and $u_i-u_j+d_{ji}\leq M$, respectively, whereas for $x_{ij}=1$ and $x_{ji}=0$, it implies $u_j=u_i + d_{ij}>u_i$ and thus prohibits $x$ from defining cycles in $\mathcal C_1$.
Normalizing with $M=1$, we obtain~\eqref{eq:genDL_ineq}:
$$u_i-u_j+x_{ij}+(1-d_{ij}-d_{ji})x_{ji}\leq 1-d_{ij}\quad \textup{for all}\ ij\in A_1.$$

For $d\in \overline D$, recall the set
$$Q_\dl(d)=\left\{(x,u)\in P_\ap\times\RR^{N_1}\tq u_i-u_j+x_{ij}+(1-d_{ij}-d_{ji})x_{ji}\leq 1-d_{ij}\ \text{for all }ij\in A_1\right\}.$$ 
Note that since $x\geq0$ and $d_{ij}+d_{ji}\leq 1$ for any $d\in\overline D$, we have that \eqref{eq:genDL_ineq} implies \eqref{eq:genMTZ_ineq} and thus $Q_\dl(d)\subseteq Q_\mtz(d)$.

\subsection{Obtaining the $d$-DL formulation from convexification}
As with the MTZ formulation, we first argue that our generalization of the DL formulation is sound.

For $d\in D$ and $ij\in A_1$, let $Z^{ij}_\dl(d)$ be the set of vectors $(u_i,u_j,x_{ij},x_{ji})\in\RR^2\times\{0,1\}^2$ such that $x_{ij}+x_{ji}\leq 1$ and
\begin{itemize}
    \item $[x_{ij}=1,\ x_{ji}=0]\Rightarrow u_j= u_i+d_{ij}$
    \item $[x_{ij}=0,\ x_{ji}=1]\Rightarrow u_i= u_j+d_{ji}$
    \item $[x_{ij}=0,\ x_{ji}=0]\Rightarrow u_i-u_j\leq 1-d_{ij},\ u_j-u_i\leq 1-d_{ji}$.
\end{itemize}

\begin{proposition}
The convex hull of $Z^{ij}_\dl(d)$ is given by
\begin{eqnarray*}
u_i-u_j+x_{ij}+(1-d_{ij}-d_{ji})x_{ji}\leq 1-d_{ji}\\
u_j-u_i+x_{ji}+(1-d_{ji}-d_{ij})x_{ij}\leq 1-d_{ij}\\
x_{ij}\geq 0\\
x_{ji}\geq 0.
\end{eqnarray*}
\end{proposition}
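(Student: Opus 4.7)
My plan is to follow the disjunctive-programming argument used for $Z^{ij}_\mtz(d)$ in the previous proposition. First, decompose $Z^{ij}_\dl(d)$ as the union of three polyhedra in $\RR^4$, indexed by $(x_{ij},x_{ji}) \in \{(1,0),(0,1),(0,0)\}$: the first two pieces are affine lines in the $(u_i,u_j)$ plane, because the DL conditions force the equalities $u_j = u_i + d_{ij}$ and $u_i = u_j + d_{ji}$ exactly, while the third piece is the horizontal strip defined by $u_i-u_j \leq 1-d_{ij}$ and $u_j-u_i \leq 1-d_{ji}$. This is the key structural difference with $Z^{ij}_\mtz(d)$, where the first two pieces were two-dimensional strips.

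Applying Balas' theorem \cite{balas1979disjunctive}, the convex hull equals the projection onto $(u_i,u_j,x_{ij},x_{ji})$ of a lifted polyhedron with convex multipliers $\lambda_1,\lambda_2,\lambda_3 \geq 0$ summing to $1$ and copies $v_i^s, v_j^s$ for $s=1,2,3$ satisfying $u_i=\sum_s v_i^s$, $u_j=\sum_s v_j^s$, $x_{ij}=\lambda_1$, $x_{ji}=\lambda_2$. Each affine piece contributes a pair of opposing inequalities scaled by $\lambda_1$ or $\lambda_2$, and the strip contributes scaled versions of its two bounds. I would then carry out Fourier--Motzkin elimination in the same order used in the $Z^{ij}_\mtz(d)$ proof: first substitute $\lambda_3=1-x_{ij}-x_{ji}$, then eliminate $v_i^3,v_j^3$, and finally the remaining auxiliary $v$-variables, using the equality pairs from the first two pieces to substitute out $v_j^1-v_i^1=d_{ij}\lambda_1$ and $v_i^2-v_j^2=d_{ji}\lambda_2$ directly rather than having to eliminate them.

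The main obstacle is the bookkeeping of coefficients so that exactly the two displayed DL inequalities survive without spurious by-products; however, the equality nature of the first two pieces actually simplifies matters because each replaces two scaled inequalities by one substitution, shortening the projection. The nonnegativity constraints $x_{ij},x_{ji}\geq 0$ come from $\lambda_1,\lambda_2\geq 0$, and the implicit bound $x_{ij}+x_{ji}\leq 1$ need not appear in the final description because it follows from summing the two displayed inequalities (valid since $d_{ij}+d_{ji}\leq 1 < 2$ on $\overline D$).

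As a sanity check to run alongside the projection, I would directly verify validity of each of the two displayed inequalities on each of the three pieces by plugging in $(x_{ij},x_{ji}) \in \{(1,0),(0,1),(0,0)\}$ together with the corresponding constraints on $(u_i,u_j)$, and exhibit a point of each piece at which the inequality is tight to rule out redundancy. This verification also provides the most transparent diagnostic for the values of the right-hand sides, and fits with the observation already made in the excerpt that $Q_\dl(d) \subseteq Q_\mtz(d)$, so the DL description on each piece is strictly tighter than the MTZ one.
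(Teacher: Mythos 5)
Your proposal follows essentially the same route as the paper: the identical three-piece disjunction (two affine lines for $(x_{ij},x_{ji})=(1,0)$ and $(0,1)$, a strip for $(0,0)$), Balas' disjunctive-programming lift with multipliers $\lambda_1,\lambda_2,\lambda_3$ and copies $v^s$, projection of the auxiliary variables using the equality pairs, and the same closing remark that $x_{ij}+x_{ji}\leq 1$ is implied by summing the two DL inequalities. As a side note, the direct verification you propose as a sanity check would reveal that the right-hand sides of the two displayed inequalities in the statement appear to be swapped relative to \eqref{eq:genDL_ineq}: on the piece $x_{ij}=1$, $x_{ji}=0$, $u_j=u_i+d_{ij}$, the first left-hand side evaluates to $1-d_{ij}$, which is bounded by $1-d_{ji}$ only when $d_{ji}\leq d_{ij}$.
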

Observe that the constraint $x_{ij}+x_{ji}\leq 1$ is redundant as it is implied by the first two, which are indeed the generalized DL inequalities.
\begin{proof}
Let us write $Z^{ij}_\dl(d)$ as the union of the following sets:
\begin{itemize}
    \item $\{(u_i,u_j)\in\RR^2\tq u_i+d_{ij}= u_j\}\times\{(1,0)\}$
    \item $\{(u_i,u_j)\in\RR^2\tq u_j+d_{ji}= u_i\}\times\{(0,1)\}$
    \item $\{(u_i,u_j)\in\RR^2\tq u_i-u_j\leq 1-d_{ij},\ u_j-u_i\leq 1-d_{ji}\}\times\{(0,0)\}$
\end{itemize}
By disjunctive programming \cite{balas1979disjunctive}, the convex hull of $Z^{ij}_\dl(d)$ is given by the projection on the $(u_i,u_j,x_{ij},x_{ji})$ variables of the system

\begin{align*}
u_i=v_1^1+v_i^2+v_i^3&&v_i^1-v_j^1+d_{ij}\lambda_1&= 0 \\
u_j=v_j^1+v_j^2+v_j^3&& v_j^2-v_i^2+d_{ji}\lambda_2&= 0\\
x_{ij}=\lambda_1&& v_i^3-v_j^3-(1-d_{ij})\lambda_3&\leq 0\\
x_{ji}=\lambda_2&&v_j^3-v_i^3-(1-d_{ji})\lambda_3&\leq 0. \\
\lambda_1+\lambda_2+\lambda_3=1 && \\
\lambda_1,\lambda_2,\lambda_3\geq 0&&
\end{align*}

By projecting out $(\lambda,v_i^3,v_j^3,v_j^1,v_i^2)$, we obtain the result.
\end{proof}

\subsection{Projecting $Q_\dl(d)$}
We now turn our attention to the set $P_\dl(d)=\proj_x(Q_\dl(d))$.

\begin{proposition}\label{prop:Pdl(d)}
We have that 
$$P_\dl(d)=\left\{x\in P_{\text{AP}}: \begin{array}{rl}\sum_{ij\in C}(x_{ij}+x_{ji})-\sum_{ij\in C}(d_{ij}+d_{ji})x_{ji}\leq |C|-\sum_{ij\in C}d_{ij}&\textup{for all}\ C\in\mathcal C_1\tq |C|\geq 3\\
x_{ij}+x_{ji}\leq 1 &\textup{for all}\ ij\in A_1\end{array}\right\}.$$
\end{proposition}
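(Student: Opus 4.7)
The plan is to apply Lemma \ref{lem:projrequiem} directly to the system defining $Q_\dl(d)$. Reading off the coefficients of the generalized DL inequalities \eqref{eq:genDL_ineq}, for each $ij \in A_1$ I set $\alpha^{ij}_{ij} = 1$, $\alpha^{ij}_{ji} = 1 - d_{ij} - d_{ji}$, $\alpha^{ij}_{kl} = 0$ for every other $kl \in A$, together with $\beta^{ij} = 1 - d_{ij}$. With $X = P_\ap$ this matches the hypothesis of the lemma exactly, and its conclusion immediately gives a description of $P_\dl(d) = \proj_x(Q_\dl(d))$ by one inequality per cycle $C \in \mathcal{C}_1$.

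Carrying out the summation prescribed by the lemma, for each $C \in \mathcal{C}_1$ one obtains
$$\sum_{ij \in C} x_{ij} + \sum_{ij \in C}(1 - d_{ij} - d_{ji}) x_{ji} \leq |C| - \sum_{ij \in C} d_{ij},$$
which I rearrange to the form stated in the proposition. It then remains to reconcile this single family of inequalities with the proposition's split between cycles of length at least $3$ (listed verbatim) and the constraints $x_{ij} + x_{ji} \leq 1$. This is where the $2$-cycles come in: for $C = \{ij, ji\}$ both arcs contribute symmetrically, and the displayed inequality collapses to
$$(2 - d_{ij} - d_{ji})(x_{ij} + x_{ji}) \leq 2 - d_{ij} - d_{ji}.$$
Since $d \in \overline D$ forces $d_{ij} + d_{ji} \leq 1 < 2$, the leading coefficient is strictly positive, and dividing through yields exactly $x_{ij} + x_{ji} \leq 1$.

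The entire argument is essentially mechanical once Lemma \ref{lem:projrequiem} is applied; the only point that asks for any thought is the $|C| = 2$ case, where one must notice that the two contributions to the cycle sum combine into a single scalar multiple of $x_{ij} + x_{ji} - 1$, and that the scalar is positive precisely because $d$ lies in $\overline D$. For $|C| \geq 3$ no such simplification is available or needed, so the lemma's output matches the proposition's listing on the nose, completing the characterization.
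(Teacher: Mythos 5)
Your proposal is correct and follows exactly the paper's own argument: apply Lemma~\ref{lem:projrequiem} with $\alpha^{ij}_{ij}=1$, $\alpha^{ij}_{ji}=1-d_{ij}-d_{ji}$, $\beta^{ij}=1-d_{ij}$, then observe that the $|C|=2$ inequality collapses to $(2-d_{ij}-d_{ji})(x_{ij}+x_{ji})\leq 2-d_{ij}-d_{ji}$ and divide by the positive coefficient. Nothing to add.
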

\begin{proof}
From Lemma \ref{lem:projrequiem}, by taking $X=P_\ap$ and defining $\alpha^{ij}$ for each $ij\in A$ by $\alpha^{ij}_{ij}=1$, $\alpha^{ij}_{ji}=1-d_{ij}-d_{ji}$, $\alpha^{ij}_{kl}=0$ for $kl\neq ij,ji$, and $\beta^{ij}=1-d_{ij}$, we obtain
$$P_\dl(d)=\left\{x\in P_{\text{AP}}\tq \sum_{ij\in C}(x_{ij}+x_{ji})-\sum_{ij\in C}(d_{ij}+d_{ji})x_{ji}\leq |C|-\sum_{ij\in C}d_{ij}\  \textup{for all } C\in\mathcal C_1\right\}.$$
Observe that if $|C|=2$, then $C=\{ij,ji\}$ for some $ij\in A_1$, and the corresponding inequality reduces to
$$(x_{ij}+x_{ji})+(x_{ji}+x_{ij})-(d_{ij}+d_{ji})x_{ji}-(d_{ji}+d_{ij})x_{ij}\leq 2-d_{ij}-d_{ji}.$$
Dividing both sides by $2-d_{ij}-d_{ji}>0$, the result follows.
\end{proof}

Note that $d$ affects both left- and right-hand sides of $P_\dl(d)$, in contrast to what happens with $P_\mtz(d)$ where $d$ appears in the right-hand side only. Also, note that the inequalities $x_{ij}+x_{ji}\leq 1$ for $ij\in A_1$ are independent of $d$. Finally, observe that $P_\dl(d)\subseteq P_\mtz(d)$ for all $d\in \overline D$ as $Q_\dl(d)\subseteq Q_\mtz(d)$.


\begin{proposition}\label{prop:facetsPdl}
For each $d\in\overline D$ and $C\in\mathcal C_1$, the inequality $\sum_{ij\in C}(x_{ij}+x_{ji})-\sum_{ij\in C}(d_{ij}+d_{ji})x_{ji}\leq |C|-\sum_{ij\in C}d_{ij}$ defines a facet of $P_\dl(d)$ if and only if $|C|=2$ or $\sum_{ij\in C}d_{ij}>0$ and $3\leq|C|\leq n-2$.
\end{proposition}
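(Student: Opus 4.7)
My plan is to follow the template of Proposition~\ref{prop:facetsPmtz}. First I will verify that $P_\dl(d)$ has the same affine hull as $P_\ap$: the uniform point $\frac{1}{n-1}\ones$ lies in $P_\dl(d)$ and satisfies every DL inequality strictly, as a direct computation shows the slack is strictly positive for all $n\geq 4$ and $|C|\geq 2$ using $\sum_{ij\in C}d_{ij}\leq 1$. Once full-dimensionality is established, a defining inequality is facet-defining if and only if it is non-redundant in the description of $P_\dl(d)$.

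For the ``only if'' direction, I handle the two cases where the DL inequality is redundant. If $|C|=n-1$, then $C\cup\revcycle C\subseteq A_1$ with $C$ and $\revcycle C$ disjoint, so $\sum_{ij\in A_1}x_{ij}=n-2$ (Observation~\ref{obs_c1}) yields $\sum_{ij\in C}(x_{ij}+x_{ji})\leq n-2\leq|C|-\sum_{ij\in C}d_{ij}$; discarding the nonnegative term $\sum_{ij\in C}(d_{ij}+d_{ji})x_{ji}$ from the left-hand side then recovers the DL inequality. If $|C|\geq 3$ and $\sum_{ij\in C}d_{ij}=0$, summing the $|C'|=2$ inequality $x_{ij}+x_{ji}\leq 1$ over $ij\in C$ gives $\sum_{ij\in C}(x_{ij}+x_{ji})\leq|C|$; since $d_{ij}=0$ for all $ij\in C$ and $\sum_{ij\in C}d_{ji}x_{ji}\geq 0$, the DL inequality is implied.

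For the ``if'' direction, I will construct an explicit point $\hat x\in P_\ap$ that violates only the target DL inequality. Letting $V(C)$ denote the nodes covered by $C$, set $\hat x=\ones_C+\ones_T$ where $T$ is a Hamiltonian cycle on $N\setminus V(C)$ chosen to contain node~$1$ (possible since $V(C)\subseteq N_1$ and $|N\setminus V(C)|=n-|C|\geq 2$; for $|C|=n-2$ the cycle $T$ is simply the 2-cycle between node~$1$ and the one remaining node). Routing $T$ through node~$1$ is crucial: the two arcs of $T$ incident to~$1$ lie in $A\setminus A_1$ and hence do not interact with any DL constraint. The DL inequality for $C$ is violated by $\hat x$ since for $|C|\geq 3$ the left-hand side equals $|C|$ (as $\hat x_{ji}=0$ for $ij\in C$) and exceeds the right-hand side by $\sum_{ij\in C}d_{ij}>0$, while for $|C|=2$ it evaluates to $2(2-d_{ij}-d_{ji})$ and exceeds the right-hand side by $2-d_{ij}-d_{ji}>0$.

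The main obstacle is verifying that $\hat x$ satisfies every other DL inequality. Writing the slack for $C'\in\mathcal C_1\setminus\{C\}$ as $|C'|-p-q-\sum_{ij\in C'\setminus\revcycle F}d_{ij}+\sum_{ij\in C'\cap\revcycle F}d_{ji}$, where $F=\textup{supp}(\hat x)$, $p=|C'\cap F|$, and $q=|C'\cap\revcycle F|$, the cycle bound $\sum_{ij\in C'}d_{ij}\leq 1$ reduces the problem to establishing $p+q\leq |C'|-1$ for every $C'\neq\revcycle C$. The underlying combinatorial fact is that $F\cap A_1$ consists of the arcs of $C$ together with the arcs of $T$ not incident to node~$1$, which form a (possibly empty) simple directed path on disjoint nodes, augmented by the extra 2-cycle $\{ij,ji\}$ only in the $|C|=2$ case. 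This structure prevents any simple cycle of length at least three in $A_1$ from being entirely contained in $(F\cup\revcycle F)\cap A_1$ except $C$ and $\revcycle C$. For $C'=\revcycle C$ a direct computation gives slack $\sum_{ij\in C}d_{ij}\geq 0$. The delicate bookkeeping is in the $|C|=2$ case, where the 2-cycle $\{ij,ji\}\subseteq F$ inflates $p+q$ by one whenever $C'$ traverses $ij$ or $ji$; this is compensated by observing that the two arcs of $C'$ adjacent to $ij$ (which touch node~$i$ or node~$j$ but differ from $ij$ and $ji$) lie outside $F\cup\revcycle F$, so their contributions to $p+q$ vanish.
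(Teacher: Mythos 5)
Your proof is correct and follows essentially the same route as the paper: the same interior point $\frac{1}{n-1}\ones$ to reduce facetness to non-redundancy, the same witness $\hat x=\ones_C+\ones_{\bar C}$ with $\bar C$ a cycle through node $1$ on the uncovered nodes, and the same redundancy arguments for $|C|=n-1$ and $\sum_{ij\in C}d_{ij}=0$. The only difference is that you spell out the verification that $\hat x$ satisfies all non-target DL inequalities (the $p+q\leq|C'|-1$ bookkeeping), which the paper simply asserts; your added detail is accurate.
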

\begin{proof}
Given $\hat C\in\mathcal C_1$ with $|\hat C|=2$ or $\sum_{ij\in \hat C}d_{ij}>0$ and $3\leq|\hat C|\leq n-2$, let $\bar C\subseteq A$ be a directed cycle on the nodes of $N$ not covered by $\hat C$ and consider $\hat x=\ones_{\hat C}+\ones_{\bar C}\in\{0,1\}^A$. 
We have that $\hat x\in P_\ap$ and satisfies all the constraints in the definition of $P_\dl(d)$, with the exception of that associated to $\hat C$ as
$\sum_{ij\in\hat  C}(\hat x_{ij}+\hat x_{ji})-\sum_{ij\in \hat C}(d_{ij}+d_{ji})\hat x_{ji}=|\hat C|.$ 
Therefore this constraint is non-redundant. This inequality is satisfied strictly by $\frac{1}{n-1}\ones \in P_\dl(d)$ since, recalling that $n\geq 4$, we have $\frac{|\hat C|}{n-1}<|\hat C|-1$ if $|\hat C|=2$ and, if $|\hat C|\geq 3$ we have
$$\sum_{ij\in \hat C}\frac{2}{n-1}-\sum_{ij\in \hat C}(d_{ij}+d_{ji})\frac{1}{n-1}<|\hat C|\frac{2}{n-1}\leq |\hat C|\frac{2}{3}\leq |\hat C|-1\leq |\hat C|-\sum_{ij\in \hat  C}d_{ij}.$$
We conclude that the constraint defines a facet of $P_\dl(d)$. 

If $\sum_{ij\in C}d_{ij}=0$, then the corresponding inequality is implied by $x_{ij}+x_{ji}\leq 1$ for $ij\in C$ and $-d_{ji}x_{ji}\leq 0$ for $ij\in C$. If $|C|=n-1$, the corresponding inequality is redundant (see Observation \ref{obs_c1}).
\end{proof}

\subsection{Comparing $P_\dl(d)$ for different $d\in \overline{D}$}
Unlike the case of $P_\mtz(d)$, by Proposition \ref{prop:Pdl(d)}, the parameter $d$ in the inequalities defining $P_\dl(d)$ not only appears as a constant in the r.h.s., but also in the l.h.s. multiplying some of the variables. As a consequence, the comparison of different $d$-DL formulations $P_\dl(d)$ for different vectors $d$ is not as straightforward as in the case of $d$-MTZ formulations. The next result shows that under some technical assumptions, anti-symmetric perturbations yield incomparable formulations.

\begin{proposition}
Let $d\in D$ and $\delta \in \RR^{A_1}\setminus\{0\}$ be such that $d+\delta\in D$, $\delta_{ij}=-\delta_{ji}$ for all $ij \in A_1$, and assume that there exists $\hat C \in\mathcal C_1$ with $|\hat C|\geq 3$ such that $\sum_{ij\in \hat C}\delta_{ij}\neq0$. Then $P_{\dl}(d)$ and $P_{\dl}(d+\delta)$ are not comparable, that is, none of these formulations is included in the other.  
\end{proposition}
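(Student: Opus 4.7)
My plan hinges on an invariance that anti-symmetric perturbations enjoy under the inequalities of Proposition~\ref{prop:Pdl(d)}. For any cycle $C$, the coefficient of $x_{ji}$ in the LHS of the $C$-inequality is $1-(d_{ij}+d_{ji})$, and since $\delta_{ij}+\delta_{ji}=0$, this coefficient is \emph{unchanged} when $d$ is replaced by $d+\delta$; the coefficient of $x_{ij}$ is always $1$. Hence the LHS of the $C$-inequality in $P_\dl(d)$ and in $P_\dl(d+\delta)$ coincide, and the two inequalities differ only in their RHS, by exactly $\sum_{ij\in C}\delta_{ij}$. Moreover, anti-symmetry gives $\sum_{ij\in\revcycle{C}}\delta_{ij}=-\sum_{ij\in C}\delta_{ij}$, so we can find cycles with $\delta$-sums of either sign by passing to reverses.

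The next step is to promote the hypothesized $\hat C$ (where only $|\hat C|\geq 3$ is guaranteed) to a cycle $C^*\in\mathcal{C}_1$ with $3\leq|C^*|\leq n-2$ and $\sum_{ij\in C^*}\delta_{ij}\neq 0$, since only cycles in that size range yield facets via Proposition~\ref{prop:facetsPdl} (using that $d,d+\delta\in D$ are strictly positive, so $\sum_{ij\in C^*}d_{ij}>0$ and $\sum_{ij\in C^*}(d_{ij}+\delta_{ij})>0$ automatically). If $|\hat C|\leq n-2$, take $C^*=\hat C$. Otherwise $|\hat C|=n-1$: pick three consecutive nodes $a,b,c$ on $\hat C$ and consider the cycle $\hat C':=(\hat C\setminus\{ab,bc\})\cup\{ac\}$ of size $n-2$ and the $3$-cycle $T:=\{ab,bc,ca\}$. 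A direct computation using anti-symmetry shows
\[
\sum_{ij\in\hat C}\delta_{ij}=\sum_{ij\in\hat C'}\delta_{ij}+\sum_{ij\in T}\delta_{ij},
\]
so if $\sum_{ij\in\hat C}\delta_{ij}\neq 0$, at least one of $\hat C'$, $T$ has nonzero $\delta$-sum; both have size in $[3,n-2]$.

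With $C^*$ in hand, WLOG assume $\sigma:=\sum_{ij\in C^*}\delta_{ij}>0$ (otherwise swap the roles of $C^*$ and $\revcycle{C^*}$). By Proposition~\ref{prop:facetsPdl} the $C^*$-inequality defines a facet $F$ of $P_\dl(d)$; choose $x^*$ in its relative interior. Then $x^*$ saturates the $C^*$-inequality for $d$, but the corresponding inequality for $d+\delta$ has RHS smaller by $\sigma>0$, so $x^*\notin P_\dl(d+\delta)$, giving $P_\dl(d)\not\subseteq P_\dl(d+\delta)$. Symmetrically, the $\revcycle{C^*}$-inequality defines a facet $F'$ of $P_\dl(d+\delta)$ (again by Proposition~\ref{prop:facetsPdl}, since $d+\delta>0$ and $3\leq|\revcycle{C^*}|\leq n-2$); a point $y^*$ in its relative interior saturates the inequality for $d+\delta$ but violates the one for $d$ (whose RHS is smaller by $\sigma$, using $\sum_{ij\in\revcycle{C^*}}\delta_{ij}=-\sigma$), so $P_\dl(d+\delta)\not\subseteq P_\dl(d)$.

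The main obstacle is the second paragraph: the hypothesis allows $|\hat C|=n-1$, in which case the $\hat C$-inequality is redundant in $P_\dl(d)$ (see Observation~\ref{obs_c1}) and cannot itself certify incomparability. The combinatorial swap $\hat C\leadsto(\hat C',T)$ with the identity $\sum_{\hat C}\delta=\sum_{\hat C'}\delta+\sum_T\delta$ is what reduces to a facet-inducing cycle, and it requires $n\geq 5$ so that both $\hat C'$ and $T$ fall in the size range $[3,n-2]$; this is the only delicate point of the argument.
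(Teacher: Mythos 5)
Your proof is correct for $n\geq 5$ and follows the same core route as the paper's: both arguments observe that an anti-symmetric perturbation leaves the left-hand sides of the cycle inequalities in Proposition~\ref{prop:Pdl(d)} unchanged and shifts the right-hand side of the $C$-inequality by $-\sum_{ij\in C}\delta_{ij}$ (hence by $+\sum_{ij\in C}\delta_{ij}$ for $\revcycle{C}$), and then invoke Proposition~\ref{prop:facetsPdl} to turn the two opposite shifts into the two required non-containments. The one place you genuinely add something is the reduction of $\hat C$ to a cycle $C^*$ with $3\leq|C^*|\leq n-2$: the paper applies Proposition~\ref{prop:facetsPdl} to $\hat C$ directly, which is only justified when $|\hat C|\leq n-2$, so your swap identity $\sum_{ij\in\hat C}\delta_{ij}=\sum_{ij\in\hat C'}\delta_{ij}+\sum_{ij\in T}\delta_{ij}$ closes the case $|\hat C|=n-1$ that the paper's proof silently skips. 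Your closing caveat is also on point: for $n=4$ there is no cycle of size in $[3,n-2]$, every inequality of $P_\dl(d)$ associated to a cycle with $|C|\geq 3$ is redundant (Observation~\ref{obs_c1}), and hence $P_\dl(d)=P_\dl(d+\delta)$, so the statement itself requires $n\geq 5$; neither your argument nor the paper's can, or should, rescue that case.
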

\begin{proof} Since $d\in D$ we have that $\sum_{ij\in C}d_{ij}>0$ for any  $C\in\mathcal C_1$. Then by Proposition~\ref{prop:facetsPdl}, the non-trivial facets of $P_\dl(d)$ are given by the inequalities
\begin{equation}\label{cycle_ineq_d}
\sum_{ij\in C}(x_{ij}+x_{ji})-\sum_{ij\in C}(d_{ij}+d_{ji})x_{ji}\leq |C|-\sum_{ij\in C}d_{ij}
\end{equation}
\noindent for all $C\in\mathcal C_1$ with $|C|\leq n-2$.
Given $C\in\mathcal C_1$ with $|C|\leq n-2$, let $\revcycle{C}=\{ji\tq ij\in C\}$ be its associated reversed cycle. We can write the facet of $P_\dl(d)$ associated to $\revcycle{C}$ as follows:
 $$\sum_{ij\in \revcycle{C}}(x_{ij}+x_{ji})-\sum_{ij\in \revcycle{C}}(d_{ij}+d_{ji})x_{ji}\leq |\revcycle{C}|-\sum_{ij\in \revcycle{C}}d_{ij}.$$
Equivalently, we can write
\begin{equation}\label{reversed_cycle_ineq_d}
\sum_{ij\in C}(x_{ij}+x_{ji})-\sum_{ij\in C}(d_{ij}+d_{ji})x_{ij}\leq |C|-\sum_{ij\in C}d_{ji}.
\end{equation}
Now, notice that 
$$d_{ij}+d_{ji}= (d_{ij}+\delta_{ij})+(d_{ji}-\delta_{ij})=(d_{ij}+\delta_{ij})+(d_{ji}+\delta_{ji})=(d+\delta)_{ij}+(d+\delta)_{ji},$$
where the second equality is given by the anti-symmetry assumption on $\delta$, that is, $\delta_{ij}=-\delta_{ji}$ for any $ij\in A_1$.
Since $d+\delta\in D$, this implies that the non-trivial facets of $P_\dl(d+\delta)$ are given by the inequalities
\begin{equation}\label{cycle_ineq_d_plus_delta}
\sum_{ij\in C}(x_{ij}+x_{ji})-\sum_{ij\in C}(d_{ij}+d_{ji})x_{ji}\leq |C|-\sum_{ij\in C}d_{ij}-\sum_{ij\in C}\delta_{ij}
\end{equation}
for all $C\in\mathcal C_1$ with $|C|\leq n-2$.
For the reversed cycle we obtain:
$$\sum_{ij\in C}(x_{ij}+x_{ji})-\sum_{ij\in C}(d_{ij}+d_{ji})x_{ij}\leq |C|-\sum_{ij\in C}d_{ji}-\sum_{ij\in C}\delta_{ji}.$$
And by the anti-symmetry assumption on $\delta$ for $\revcycle{C}$ we can write
\begin{equation}\label{reversed_cycle_ineq_d_plus_delta}
\sum_{ij\in C}(x_{ij}+x_{ji})-\sum_{ij\in C}(d_{ij}+d_{ji})x_{ij}\leq |C|-\sum_{ij\in C}d_{ji}+\sum_{ij\in C}\delta_{ij}.
\end{equation}

Let $\hat C \in\mathcal C_1$ as in the statement of the proposition and notice that without loss of generality we may assume that $\sum_{ij\in \hat C}\delta_{ij}>0$. Then by Proposition \ref{prop:facetsPdl} the inequalities \eqref{cycle_ineq_d} and \eqref{reversed_cycle_ineq_d} of $P_\dl(d)$ and the inequalities \eqref{cycle_ineq_d_plus_delta} and \eqref{reversed_cycle_ineq_d_plus_delta} of $P_\dl(d+\delta)$ associated to $\hat C$ are all facet defining. Now, since $\sum_{ij\in \hat C}\delta_{ij}>0$,  for the cycle $\hat C$ we have that the r.h.s. of inequality \eqref{cycle_ineq_d_plus_delta} of $P_{\dl}(\hat d+\delta)$ is strictly less than the r.h.s. of  inequality \eqref{cycle_ineq_d} of $P_{\dl}(\hat d)$. As both inequalities define facets, we obtain that $P_{\dl}(\hat d)$ is not contained in $P_{\dl}(\hat d+\delta)$. Similarly, for the cycle $\hat C$ the r.h.s. of inequality \eqref{reversed_cycle_ineq_d} of $P_{\dl}(\hat d) $ is strictly less than the r.h.s. of inequality \eqref{reversed_cycle_ineq_d_plus_delta} of $P_{\dl}(\hat d+\delta)$. Therefore, we obtain that $P_{\dl}(\hat d)$ is not contained in $P_{\dl}(\hat d+\delta)$. We conclude that $P_{\dl}(\hat d)$ and $P_{\dl}(\hat d+\delta)$ are not comparable. 
\end{proof}

We note here that the assumption ``there exists $\hat C \in\mathcal C_1$ with $|\hat C|\geq 3$ such that $\sum_{ij\in \hat C}\delta_{ij}\neq0$'' is not superfluous as the following example shows: consider $N_1=\{2,3,4\}$ and $\delta_{32}=-\delta_{23}=1$, $\delta_{34}=-\delta_{43}=1/2$, and $\delta_{42}=-\delta_{24}=1/2$. Then $\delta$ satisfies the anti-symmetric property and $\sum_{ij\in C}\delta_{ij}=0$ for any cycle $C \in\mathcal C_1$.

\subsection{The intersection of all $d$-DL formulations}\label{sec:closure_d-DLformulations}

Given nonempty $D'\subseteq \overline D$, consider
$$\Ef(Q_\dl(D'))=\left\{(x,u)\in P_\ap\times\left(\RR^{N_1}\right)^{D'}\tq (x,u(d))\in Q_\dl(d)\ \forall d\in D'\right\}.$$

\begin{proposition}\label{prod_dl}
    Let $D'\subseteq D$. If for each $C\in\mathcal C_1$, there exists $d\in D'$ such that $\sum_{ij\in C}d_{ij}>0$, then no integer $x\in \proj_x(\Ef(Q_\dl(D')))$ can define subtours. 
    In particular, if $D'$ is finite, then $\Ef(Q_\dl(D'))$ is a valid extended formulation for the ATSP.
\end{proposition}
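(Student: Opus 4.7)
The plan is to mirror the proof of Proposition~\ref{prod_mtz}, now using Proposition~\ref{prop:Pdl(d)} in place of Proposition~\ref{prop:Pmtz(d)}. By Lemma~\ref{lemma:operators_relation}, one has
$$\proj_x(\Ef(Q_\dl(D')))=\bigcap_{d\in D'}P_\dl(d),$$
so any $x$ in this projection must satisfy, for every $d\in D'$, both the bounds $x_{ij}+x_{ji}\leq 1$ for all $ij\in A_1$ and
$$\sum_{ij\in C}(x_{ij}+x_{ji})-\sum_{ij\in C}(d_{ij}+d_{ji})x_{ji}\leq |C|-\sum_{ij\in C}d_{ij}\quad\textup{for all}\ C\in\mathcal C_1,\ |C|\geq 3.$$
I would then argue by contradiction.

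Assume some integer $x$ in the intersection defines a subtour. The assignment constraints force $\{ij\tq x_{ij}=1\}$ to decompose into node-disjoint directed cycles covering $N$, so at least one of these cycles, say $C^*$, will avoid node $1$ and thus belong to $\mathcal C_1$. I would split into two cases. If $|C^*|=2$, so $C^*=\{ij,ji\}$, then $x_{ij}+x_{ji}=2$ immediately violates the inequality $x_{ij}+x_{ji}\leq 1$. If instead $|C^*|\geq 3$, then for any $ij\in C^*$ the successor of $j$ within $C^*$ differs from $i$, so the out-degree constraint at $j$ forces $x_{ji}=0$. Substituting $x_{ij}=1$ and $x_{ji}=0$ for $ij\in C^*$ into the displayed inequality for $C^*$ and an arbitrary $d\in D'$ will yield
$$|C^*|\leq |C^*|-\sum_{ij\in C^*}d_{ij},$$
that is, $\sum_{ij\in C^*}d_{ij}\leq 0$ for every $d\in D'$, contradicting the hypothesis that some $d\in D'$ satisfies $\sum_{ij\in C^*}d_{ij}>0$.

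When $D'$ is finite, $\Ef(Q_\dl(D'))$ is cut out by finitely many linear inequalities, so the preceding no-subtours conclusion will show that its integer $x$-projection consists exactly of Hamiltonian tours, giving a valid extended formulation of the ATSP. The main delicate point to verify is the step $x_{ji}=0$ for $ij\in C^*$ under $|C^*|\geq 3$, which is what distinguishes the DL analysis from the MTZ one, since the DL inequalities carry the lifted coefficient $(1-d_{ij}-d_{ji})$ on $x_{ji}$ and this coefficient must be killed to extract the clean contradiction. The case $|C^*|=2$ is absorbed by the explicit inequality $x_{ij}+x_{ji}\leq 1$ built into $P_\dl(d)$, so no separate treatment of $2$-cycles is required beyond noting that these inequalities belong to $P_\dl(d)$ for every $d\in D'$.
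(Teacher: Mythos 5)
Your proof is correct, but it takes a genuinely different route from the paper. The paper's proof of Proposition~\ref{prod_dl} is a one-line reduction: since $Q_\dl(d)\subseteq Q_\mtz(d)$ for every $d$ (the DL inequalities imply the MTZ ones because $x\geq 0$ and $d_{ij}+d_{ji}\leq 1$), one has $\proj_x(\Ef(Q_\dl(D')))\subseteq \proj_x(\Ef(Q_\mtz(D')))$, and the conclusion is inherited directly from Proposition~\ref{prod_mtz}. You instead argue directly from the explicit description of $P_\dl(d)$ in Proposition~\ref{prop:Pdl(d)}, decomposing an integer point of $P_\ap$ into node-disjoint cycles, isolating a cycle $C^*\in\mathcal C_1$, and handling $|C^*|=2$ via the inequalities $x_{ij}+x_{ji}\leq 1$ and $|C^*|\geq 3$ via the observation that $x_{ji}=0$ for $ij\in C^*$, which kills the lifted coefficient $(d_{ij}+d_{ji})$ and yields $\sum_{ij\in C^*}d_{ij}\leq 0$ for all $d\in D'$, contradicting the hypothesis. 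Both arguments are sound; the paper's is shorter and exploits the monotone relation between the two families, while yours is self-contained at the level of the DL projection and makes explicit exactly which DL inequality is violated by which kind of subtour. Your identification of the step $x_{ji}=0$ as the delicate point is apt, and your justification of it (for $|C^*|\geq 3$ the out-degree constraint at $j$ forces its unique successor to differ from $i$) is correct.
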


\begin{proof}
    The result follows from Proposition~\ref{prod_mtz} by noting that $\proj_x(\Ef(Q_\dl(D'))) \subseteq \proj_x(\Ef(Q_\mtz(D')))$.
\end{proof}

As with the MTZ formulation, any $D'=\{d\}$ with $d\in D$ satisfies the above condition since $d>0$, in which case $Q_\dl(D')=Q_\dl(d)$. Also, we can choose $D'$ contained in the boundary of $\overline D$ as long it satisfies the conditions if Proposition~\ref{prod_dl}. Following the above result, for $kl\in A_1$, let $d^{kl}$ be the $kl$-th canonical vector in $\RR^{A_1}$, and define $V_\dl=\{d^{kl}\tq kl \in A_1\}$. Clearly, we have $V_\dl\subseteq \overline D$ and each element of $V_\dl$ is a vertex of $\overline D$.

\begin{theorem}\label{cl_dl}
We have that $\Cl(P_\dl(D))=\Cl(P_\dl(V_\dl))=\overline P_\dl$, where
$$\overline P_\dl=\left\{x\in P_\ap\tq 
\begin{array}{rl}\sum_{ij\in C}(x_{ij}+x_{ji})-x_{lk}\leq |C|-1& \textup{for all } C\in\mathcal C_1,\ \textup{for all } kl\in C\tq |C|\geq 3\\x_{ij}+x_{ji}\leq 1& \textup{for all } ij\in A_1
\end{array}\right\}.$$
\end{theorem}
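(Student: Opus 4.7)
The plan mirrors the proof of Theorem~\ref{cl_mtz}. I would establish the chain
$$\overline P_\dl \subseteq \Cl(P_\dl(D)) = \Cl(P_\dl(\overline D)) \subseteq \Cl(P_\dl(V_\dl)),$$
where the middle equality follows from Lemma~\ref{lem:robustprops}(1) since $\overline D$ is the (convex) topological closure of $D$, and the final inclusion holds because $V_\dl\subseteq \overline D$. Then I would close the loop by computing $\Cl(P_\dl(V_\dl))$ explicitly and observing that it coincides with $\overline P_\dl$. So the two substantive tasks are (a) the first inclusion $\overline P_\dl\subseteq P_\dl(d)$ for every $d\in D$, and (b) the identification $\Cl(P_\dl(V_\dl))=\overline P_\dl$.

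For (a), fix $d\in D$ and $C\in\mathcal C_1$ with $|C|\geq 3$; the goal is to derive the inequality from Proposition~\ref{prop:Pdl(d)}. The idea is a convex combination of $\overline P_\dl$-inequalities. For each $kl\in C$, the cycle inequality $\sum_{ij\in C}(x_{ij}+x_{ji})-x_{lk}\leq |C|-1$ is available, and for each $ij\in C$ the inequality $x_{ij}+x_{ji}\leq 1$ is available. I would weight the former by $d_{kl}\geq 0$, sum over $kl\in C$, and add $(1-\sum_{kl\in C}d_{kl})\geq 0$ times the aggregated bound $\sum_{ij\in C}(x_{ij}+x_{ji})\leq |C|$. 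After reindexing $kl\mapsto ij$ in the term $\sum_{kl\in C}d_{kl}x_{lk}=\sum_{ij\in C}d_{ij}x_{ji}$, this yields
$$\sum_{ij\in C}(x_{ij}+x_{ji})-\sum_{ij\in C}d_{ij}x_{ji}\leq |C|-\sum_{ij\in C}d_{ij}.$$
Since $d_{ji}x_{ji}\geq 0$, subtracting $\sum_{ij\in C}d_{ji}x_{ji}$ from the LHS only strengthens the bound, giving exactly the inequality of $P_\dl(d)$ from Proposition~\ref{prop:Pdl(d)}. The $|C|=2$ case reduces directly to $x_{ij}+x_{ji}\leq 1$, which is already in $\overline P_\dl$.

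For (b), I would evaluate $P_\dl(d^{kl})$ via Proposition~\ref{prop:Pdl(d)} for each canonical vector $d^{kl}\in V_\dl$. Since $d^{kl}$ has a single unit entry, one checks that $\sum_{ij\in C}d^{kl}_{ij}=\mathbf{1}[kl\in C]$ and $\sum_{ij\in C}(d^{kl}_{ij}+d^{kl}_{ji})x_{ji}=\mathbf{1}[kl\in C]\,x_{lk}+\mathbf{1}[lk\in C]\,x_{kl}$; note that for $|C|\geq 3$ at most one of $kl,lk$ can lie in $C$. A short case split shows that the cycle inequality of $P_\dl(d^{kl})$ is implied by $x_{ij}+x_{ji}\leq 1$ whenever $kl\notin C$, and otherwise reads $\sum_{ij\in C}(x_{ij}+x_{ji})-x_{lk}\leq |C|-1$. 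Intersecting over all $kl\in A_1$, together with the constraints $x_{ij}+x_{ji}\leq 1$ that appear in every $P_\dl(d^{kl})$, exactly reproduces the definition of $\overline P_\dl$.

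The main obstacle is the weighted-combination argument in (a): the bookkeeping in reindexing $kl\leftrightarrow ij$ on the $x_{lk}$ terms, and making sure that the leftover $d_{ji}x_{ji}$ piece is dropped on the correct side so that the derived inequality is at least as strong as the target. Once this is handled, part (b) is a direct computation analogous to the MTZ case, and the subsequent analogue of Corollary~\ref{cor_mtz} for $\overline P_\dl$ should fall out of Corollary~\ref{prod_V} by taking $\overline Q_\dl=\Ef(Q_\dl(V_\dl))$.
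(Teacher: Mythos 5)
Your proposal is correct and follows essentially the same route as the paper: compute $P_\dl(d^{kl})$ for the canonical vectors and intersect to get $\overline P_\dl=\Cl(P_\dl(V_\dl))$, then recover $\overline P_\dl\subseteq P_\dl(d)$ for each $d\in D$ by a nonnegative aggregation of the cycle inequalities indexed by $kl\in C$ together with $x_{ij}+x_{ji}\leq 1$, using $\sum_{kl\in C}d_{kl}\leq 1$ and $d_{ji}x_{ji}\geq 0$ to pass to the $P_\dl(d)$ inequality. The only cosmetic difference is that the paper normalizes the weights by $\sum_{kl\in C}d_{kl}$ and then rescales, whereas you place the residual weight $1-\sum_{kl\in C}d_{kl}$ on the aggregated bound $\sum_{ij\in C}(x_{ij}+x_{ji})\leq|C|$; these are equivalent.
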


\begin{proof}
From Proposition~\ref{prop:Pdl(d)}, for any $kl \in A_1$ we have
$$P_\dl(d^{kl})
=\left\{x\in P_\ap\tq 
\begin{array}{rl}
\sum_{ij\in C}(x_{ij}+x_{ji})-\sum_{ij\in C}(d^{kl}_{ij}+d^{kl}_{ji})x_{ji}\leq |C|-\sum_{ij\in C}d^{kl}_{ij}&  \textup{for all }C\in\mathcal C_1\tq |C|\geq 3\\
x_{ij}+x_{ji}\leq 1&\textup{for all } ij\in A_1
\end{array}
\right\}$$
$$=\left\{x\in P_\ap\tq 
\begin{array}{rl}
\sum_{ij\in C}(x_{ij}+x_{ji})-x_{lk}\leq |C|-1&  \textup{for all } C\in\mathcal C_1\tq kl\in C,\ |C|\geq 3\\
\sum_{ij\in C}(x_{ij}+x_{ji})-x_{kl}\leq |C|&  \textup{for all } C\in\mathcal C_1\tq lk\in C,\ |C|\geq 3\\
\sum_{ij\in C}(x_{ij}+x_{ji})\leq |C|&  \textup{for all } C\in\mathcal C_1\tq kl,lk\notin C,\ |C|\geq 3\\
x_{ij}+x_{ji}\leq 1&\textup{for all } ij\in A_1
\end{array}
\right\}$$
$$=\left\{x\in P_\ap\tq 
\begin{array}{rl}
\sum_{ij\in C}(x_{ij}+x_{ji})-x_{lk}\leq |C|-1&  \textup{for all } C\in\mathcal C_1\tq kl\in C,\ |C|\geq 3\\
x_{ij}+x_{ji}\leq 1&\textup{for all } ij\in A_1
\end{array}
\right\}.$$
Therefore,
\begin{align*}
\Cl(P_\dl(V_\dl))&=\bigcap_{kl\in A_1}P_\dl(d^{kl})\\
&=\left\{x\in P_\ap\tq 
\begin{array}{rl}
\sum_{ij\in C}(x_{ij}+x_{ji})-x_{lk}\leq |C|-1&  \forall C\in\mathcal C_1,\ \forall kl\in C,\ |C|\geq 3\\
x_{ij}+x_{ji}\leq 1&\forall ij\in A_1
\end{array}
\right\}=\overline P_\dl.
\end{align*}

We thus obtain $\Cl(P_\dl(D))=\Cl(P_\dl(\overline D))\subseteq \Cl(P_\dl(V_\dl))=\overline P_\dl$. To show that $\overline P_\dl\subseteq \Cl(P_\dl(D))$, let $x\in \overline P_\dl$, $d\in D$ and $C\in\mathcal C_1$ with $|C|\geq 3$. We have
$$\sum_{ij\in C}(x_{ij}+x_{ji})+1-x_{lk}\leq |C|\quad \forall kl\in C.$$
Multiplying each of these inequalities by $\frac{d_{kl}}{\sum_{kl\in C}d_{kl}}$ and summing over all $kl \in C$, we obtain
$$\sum_{ij\in C}(x_{ij}+x_{ji})+\frac{1}{\sum_{kl\in C}d_{kl}}\sum_{ij\in C}(d_{ij}-d_{ij}x_{ji})\leq |C|.$$
Since $\sum_{kl\in C}d_{kl}\leq 1$ and $d_{ij}-d_{ij}x_{ji}\geq0$, we have
$$\sum_{ij\in C}(x_{ij}+x_{ji})+\sum_{ij\in C}(d_{ij}-d_{ij}x_{ji})\leq |C|.$$
As $\sum_{ij\in C}-d_{ji}x_{ji}\leq 0$, the latter inequality implies that
$$\sum_{ij\in C}(x_{ij}+x_{ji})+\sum_{ij\in C}(d_{ij}-(d_{ij}+d_{ji})x_{ji})\leq |C|.$$
Therefore, $x\in P_\dl(d)$ for all $d\in D$ and thus $\overline P_\dl \subseteq \Cl(P_\dl(D))$.
\end{proof}

 \begin{corollary}
 We have that $\Cl(P_\dl(D))=\proj_x(\overline Q_\dl)$, where
$$\overline Q_\dl=\left\{(x,(u^{kl}\tq kl\in A_1))\in P_\ap\times\RR^{A_1\times N_1}_+\tq \begin{array}{rl}
\displaystyle u^{kl}_k-u^{kl}_l\leq -x_{kl}& \textup{for all}\ kl\in A_1\\
\displaystyle u^{kl}_l-u^{kl}_k\leq 1-x_{lk}& \textup{for all}\  kl\in A_1\\
\displaystyle u^{kl}_i-u^{kl}_j\leq 1-x_{ij}-x_{ji}& \textup{for all} \ kl,ij\in A_1\tq ij\neq kl,\ ij\neq lk
\end{array}\right\}.$$
\end{corollary}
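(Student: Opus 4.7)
The plan is to combine Lemma~\ref{lemma:operators_relation} with Theorem~\ref{cl_dl} via a direct verification that $\overline Q_\dl$ coincides with $\Ef(Q_\dl(V_\dl))$, up to a projection-irrelevant sign restriction on the auxiliary variables. First I would specialize the defining inequalities \eqref{eq:genDL_ineq} of $Q_\dl(d)$ at each canonical vector $d^{kl} \in V_\dl$. Since $d^{kl}_{ij} = 1$ if $ij = kl$ and $0$ otherwise, substituting into \eqref{eq:genDL_ineq} for each arc $ij \in A_1$ splits into three cases: if $ij = kl$, the inequality collapses to $u^{kl}_k - u^{kl}_l \leq -x_{kl}$; if $ij = lk$, it becomes $u^{kl}_l - u^{kl}_k \leq 1 - x_{lk}$; and otherwise $u^{kl}_i - u^{kl}_j \leq 1 - x_{ij} - x_{ji}$. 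These are exactly the three groups of inequalities appearing in the definition of $\overline Q_\dl$.

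Next I would observe that the nonnegativity restriction $u^{kl} \in \RR^{N_1}_+$ built into $\overline Q_\dl$ is immaterial for the $x$-projection: only differences of components of each $u^{kl}$ appear in the constraints above, so any feasible $u^{kl}$ can be shifted by a sufficiently large constant to become nonnegative without affecting feasibility. Consequently, $\proj_x(\overline Q_\dl) = \proj_x(\Ef(Q_\dl(V_\dl)))$.

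Finally, applying Lemma~\ref{lemma:operators_relation} with $\PP = V_\dl$ yields $\proj_x(\Ef(Q_\dl(V_\dl))) = \Cl(P_\dl(V_\dl))$, which by Theorem~\ref{cl_dl} equals $\Cl(P_\dl(D)) = \overline P_\dl$. Chaining these equalities gives the claim. The argument is essentially a bookkeeping exercise parallel to that of Corollary~\ref{cor_mtz}; the only real obstacles are correctly performing the three-case substitution of $d^{kl}$ into \eqref{eq:genDL_ineq} and noting that the $u \geq 0$ restriction in $\overline Q_\dl$ can be ignored when passing to the $x$-projection.
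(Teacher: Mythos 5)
Your proposal is correct and follows essentially the same route as the paper: identify $\overline Q_\dl$ with $\Ef(Q_\dl(V_\dl))$ by substituting the canonical vectors $d^{kl}$ into \eqref{eq:genDL_ineq}, then project and invoke Theorem~\ref{cl_dl}. Your explicit three-case verification and your observation that the nonnegativity restriction on the $u^{kl}$ is harmless under projection are details the paper leaves implicit, but they do not change the argument.
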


\begin{proof}
Noting that $\overline Q_\dl=\Ef(Q_\dl(V_\dl))$, the result follows from Theorem~\ref{cl_dl} and Corollary~\ref{prod_V}.
\end{proof}
Theorem~\ref{cl_dl} shows that $\Cl(P_\dl(D))$ can be obtained with a modest number of vectors in $\overline D$ which are easy to identify, instead of the complete set of vertices of $\overline D$. This time, however, we require $\mathcal O(n^2)$ such vectors instead of $\mathcal O(n)$ as with the MTZ formulation. Moreover, it can be shown that $V_\mtz$ does not necessarily yield $\Cl(P_\dl(D))$.

\begin{proposition}\label{dl_mtz}
 $\Cl(P_\dl(V_\mtz))$ is given by
$$\left\{x\in P_\ap\tq 
\begin{array}{rl}
\sum_{ij\in C}(x_{ij}+x_{ji})-x_{k k^C_-}-x_{k^C_+ k}\leq |C|-1&  \textup{for all}\ C\in\mathcal C_1,\ \textup{for all}\ k\in C\tq |C|\geq 3\\
x_{ij}+x_{ji}\leq 1&\textup{for all}\ ij\in A_1
\end{array}
\right\},$$
where $k^C_-$ and $k^C_+$ denote the node preceding and succeeding $k$ in cycle $C$, respectively.
\end{proposition}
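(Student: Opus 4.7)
The plan is to apply Proposition~\ref{prop:Pdl(d)} to each $d^k\in V_\mtz$, simplify the resulting inequalities based on whether the cycle $C$ meets node $k$, and then intersect over $k\in N_1$.

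First I would fix $k\in N_1$ and analyze the two $d$-dependent sums that appear in the description of $P_\dl(d^k)$. Since $d^k_{ij}=1$ precisely when $i=k$, we have $\sum_{ij\in C}d^k_{ij}=1$ if $C$ passes through $k$ (as $C$ is a cycle, exactly one arc out of $k$ belongs to $C$) and $=0$ otherwise. Similarly, $\sum_{ij\in C}(d^k_{ij}+d^k_{ji})x_{ji}$ picks up exactly two contributions when $k\in C$: the term $d^k_{ij}x_{ji}$ for the outgoing arc $ij=k k^C_+$ contributes $x_{k^C_+ k}$, and the term $d^k_{ji}x_{ji}$ for the incoming arc $ij=k^C_- k$ contributes $x_{k k^C_-}$; when $k\notin C$ the sum vanishes.

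Next I would split into cases. If $k\notin C$, the cycle inequality in Proposition~\ref{prop:Pdl(d)} reduces to $\sum_{ij\in C}(x_{ij}+x_{ji})\leq |C|$, which is redundant since $x_{ij}+x_{ji}\leq 1$ is already enforced for every $ij\in A_1$. If $k\in C$ (so $|C|\geq 3$ is the nontrivial case), the inequality becomes
\[
\sum_{ij\in C}(x_{ij}+x_{ji})-x_{k k^C_-}-x_{k^C_+ k}\leq |C|-1,
\]
which is precisely the inequality appearing in the statement.

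Finally, I would intersect over $k\in N_1$. Since every node of every $C\in\mathcal C_1$ lies in $N_1$, the union over $k\in N_1$ of the constraints ``$k\in C$ version'' gives exactly one inequality per pair $(C,k)$ with $k\in C$ and $|C|\geq 3$, together with the constraints $x_{ij}+x_{ji}\leq 1$ which are present in each $P_\dl(d^k)$ independently of $k$. This yields the claimed description of $\Cl(P_\dl(V_\mtz))$. There is no real obstacle here: the argument is a direct computation once one correctly identifies which terms of $\sum_{ij\in C}(d^k_{ij}+d^k_{ji})x_{ji}$ survive under the canonical vector $d^k$, so the care required is essentially bookkeeping on the indices of the surviving $x_{ji}$ terms.
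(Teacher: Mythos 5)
Your proposal is correct and follows essentially the same route as the paper's proof: apply Proposition~\ref{prop:Pdl(d)} with $d=d^k$, observe that $\sum_{ij\in C}d^k_{ij}=1$ and $\sum_{ij\in C}(d^k_{ij}+d^k_{ji})x_{ji}=x_{k k^C_-}+x_{k^C_+ k}$ exactly when $k\in C$ (with the inequality becoming redundant against $x_{ij}+x_{ji}\leq 1$ when $k\notin C$), and then intersect over $k\in N_1$. The index bookkeeping for the surviving terms is also correct.
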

\begin{proof}
From Proposition~\ref{prop:Pdl(d)}, for any $k \in N_1$ we have
$$P_\dl(d^k)
=\left\{x\in P_\ap\tq 
\begin{array}{rl}
\sum_{ij\in C}(x_{ij}+x_{ji})-\sum_{ij\in C}(d^k_{ij}+d^k_{ji})x_{ji}\leq |C|-\sum_{ij\in C}d^k_{ij}&  \textup{for all }C\in\mathcal C_1\tq |C|\geq 3\\
x_{ij}+x_{ji}\leq 1&\textup{for all } ij\in A_1
\end{array}
\right\}$$
$$=\left\{x\in P_\ap\tq 
\begin{array}{rl}
\sum_{ij\in C}(x_{ij}+x_{ji})-\sum_{ij\in C}(d^k_{ij}+d^k_{ji})x_{ji}\leq |C|-1&  \textup{for all }C\in\mathcal C_1\tq |C|\geq 3,\ k\in C\\
\sum_{ij\in C}(x_{ij}+x_{ji})\leq |C|&  \textup{for all }C\in\mathcal C_1\tq |C|\geq 3,\ k\notin C\\
x_{ij}+x_{ji}\leq 1&\textup{for all } ij\in A_1
\end{array}
\right\}$$
$$=\left\{x\in P_\ap\tq 
\begin{array}{rl}
\sum_{ij\in C}(x_{ij}+x_{ji})-x_{k k^C_-}-x_{k^C_+ k}\leq |C|-1&  \textup{for all}\ C\in\mathcal C_1\tq |C|\geq 3,\ k\in C\\
x_{ij}+x_{ji}\leq 1&\textup{for all } ij\in A_1
\end{array}
\right\},$$

Therefore,
\begin{align*}
\Cl(P_\dl(V_\mtz))&=\bigcap_{k\in N_1}P_\dl(d^k)\\
&=\left\{x\in P_\ap\tq 
\begin{array}{rl}
\sum_{ij\in C}(x_{ij}+x_{ji})-x_{k k^C_-}-x_{k^C_+ k}\leq |C|-1&  \textup{for all}\ C\in\mathcal C_1,\ \textup{for all}\ k\in C\tq |C|\geq 3\\
x_{ij}+x_{ji}\leq 1&\textup{for all}\ ij\in A_1
\end{array}
\right\}.
\end{align*}
\end{proof}

Observe that the L1RMTZ formulation in \cite{gouveia1999asymmetric} coincides with $\Cl(P_\dl(V_\mtz))$. However, from Theorem~\ref{cl_dl} and Proposition~\ref{dl_mtz}, we have that L1RMTZ does not yield $\Cl(P_\dl(D))$ in general.


\section{Parametric $b$-SCF formulations}\label{sec:scf}
In this section, we consider flow-based formulations for the ATSP parametrized by demand and supply. We apply a framework similar to that of $d$-MTZ and $d$-DL formulations.

Given $b\in\RR^{N_1}_{++}$, let $M=\sum_{i\in N_1}b_i$ and consider the following generalization of the SCF formulation
\begin{eqnarray}
\sum_{ij\in\delta^+(i)}f_{ij}-\sum_{ji\in\delta^-(i)}f_{ji}=\begin{cases}M& i=1\\-b_i& i\in N_1\end{cases}\nonumber\\
f_{ij}\leq Mx_{ij}\quad \textup{for all}\ ij\in A.\nonumber
\end{eqnarray}
Note that the set $B=\{b\in\RR^{N_1}_{++}\tq \sum_{i\in N_1}b_i=1\}$ is obtained by the normalizing condition $M=1$. For $b\in \overline B$, recall 
$$Q_\scf(b)=\left\{(x,f)\in P_\ap\times\RR^A_+\tq \begin{array}{rl}\displaystyle\sum_{ij\in\delta^+(i)}f_{ij}-\sum_{ji\in\delta^-(i)}f_{ji}=-b_i& \textup{for all}\ i\in N_1\\
\displaystyle f_{ij}\leq x_{ij}& \textup{for all}\ ij\in A
\end{array}\right\},$$
where we have omitted the flow constraint on node $i=1$ since it is redundant.

\subsection{Projecting $Q_\scf(b)$}
We now give an explicit description of the set $P_\scf(b) = \proj_x(Q_\scf(b))$.

\begin{proposition}\label{prop:Pscf(b)}
We have that
$$P_\scf(b)=\left\{x\in P_{\text{AP}}\tq \sum_{ij\in\delta^+(S)}x_{ij}\geq \sum_{i\in S}b_i\   \textup{for all } S\in\mathcal S_1\right\}.$$
\end{proposition}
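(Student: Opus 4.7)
The proof has two containments and is essentially an application of the max-flow/min-cut theorem.

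For the easy direction $P_\scf(b)\subseteq\{x\in P_\ap\tq \sum_{ij\in\delta^+(S)}x_{ij}\geq \sum_{i\in S}b_i\ \forall S\in\mathcal S_1\}$, I would take $(x,f)\in Q_\scf(b)$ and, for any $S\in\mathcal S_1$, sum the flow-balance equation $\sum_{ij\in\delta^+(i)}f_{ij}-\sum_{ji\in\delta^-(i)}f_{ji}=-b_i$ over $i\in S$. Arcs with both endpoints in $S$ cancel, yielding
\[
\sum_{ij\in\delta^-(S)}f_{ij}-\sum_{ij\in\delta^+(S)}f_{ij}=\sum_{i\in S}b_i,
\]
and since $0\leq f\leq x$ componentwise, $\sum_{ij\in\delta^-(S)}x_{ij}\geq\sum_{i\in S}b_i$. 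Now the assignment equalities \eqref{AssignPolytope1}–\eqref{AssignPolytope2} give $\sum_{ij\in\delta^+(S)}x_{ij}=\sum_{ij\in\delta^-(S)}x_{ij}$ (both equal $|S|-\sum_{ij\in A(S)}x_{ij}$), which delivers the desired cut inequality.

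For the reverse inclusion, I would fix $x\in P_\ap$ satisfying the cut inequalities and exhibit a feasible flow $f$. The natural framework is a single-source single-sink feasibility flow: treat node $1$ as a source with supply $\sum_{i\in N_1}b_i=1$, treat each $i\in N_1$ as a sink with demand $b_i$ (added via auxiliary arcs $i\to t$ of capacity $b_i$ to a super-sink $t$), and use capacities $x_{ij}$ on the original arcs of $A$. By the max-flow/min-cut theorem, a flow meeting all demands exists if and only if for every set $T\subseteq N$ with $1\in T$ the capacity out of $T$ is at least $\sum_{i\in N\setminus T}b_i$, i.e.
\[
\sum_{ij\in\delta^+(T)}x_{ij}\geq\sum_{i\in N\setminus T}b_i.
\]
Setting $S=N\setminus T\subseteq N_1$, this becomes $\sum_{ij\in\delta^-(S)}x_{ij}\geq\sum_{i\in S}b_i$, which by the assignment symmetry above is equivalent to $\sum_{ij\in\delta^+(S)}x_{ij}\geq\sum_{i\in S}b_i$. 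The case $|S|\geq 2$ (i.e.\ $S\in\mathcal S_1$) is the hypothesis, and the case $S=\{i\}$ is automatic because $\sum_{ij\in\delta^+(i)}x_{ij}=1\geq b_i$.

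The only subtlety, and the step I'd write most carefully, is arguing that the cut conditions arising from max-flow/min-cut are precisely the stated inequalities; the bookkeeping is the equality $\sum_{ij\in\delta^+(S)}x_{ij}=\sum_{ij\in\delta^-(S)}x_{ij}$ on $P_\ap$ together with the singleton case. Everything else is either standard flow cancellation or a direct invocation of max-flow/min-cut, so no serious obstacle is expected.
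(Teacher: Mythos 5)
Your proof is correct and follows essentially the same route as the paper: the paper invokes Gale's flow-feasibility theorem with supply vector $\hat b=(-1,b)$ and then discards the cut conditions for sets containing node $1$ and for singletons, which is the same duality your super-sink max-flow/min-cut reduction packages slightly differently (your construction automatically produces only the cuts with $S\subseteq N_1$, and you verify the easy direction by summing balance equations rather than reading it off the ``only if'' part of the feasibility theorem). Both arguments hinge on the same bookkeeping identity $\sum_{ij\in\delta^+(S)}x_{ij}=\sum_{ij\in\delta^-(S)}x_{ij}$ on $P_\ap$ and the singleton case, so there is nothing to fix.
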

\begin{proof}

Let $x\in P_\scf(b)$ and let $\hat b=(-1,b)\in\RR^N$. Since $\sum_{i\in N}\hat b_i=0$, by Gale's flow theorem \cite{gale1957theorem}, there exists $f\in\RR^A_+$ such that $f\leq x$ and $\sum_{ij\in\delta^-(i)}f_{ij}-\sum_{ji\in\delta^+(i)}f_{ji}=\hat b_i$ for $i\in N$ if and only if $\sum_{ij\in\delta^-(S)}x_{ij}\geq \sum_{i\in S}\hat b_i$ for all $\emptyset\neq S\subsetneq N$. Note that if $1\in S$, then $\sum_{i\in S}\hat b_i=\sum_{i\in S\setminus\{1\}}b_i-1\leq 0$ and the condition is trivially satisfied. In addition, since $x\in P_\ap$, we have $\sum_{ji\in\delta^-(i)}x_{ji}=1\geq b_i$ for $i\in N_1$, and thus the condition is satisfied if $S=\{i\}$. Therefore, the existence of $f$ holds if and only if $\sum_{ij\in\delta^-(S)}x_{ij}\geq \sum_{i\in S}b_i$ for all $S\in\mathcal S_1$. Finally, since $x\in P_\ap$, we have $\sum_{ij\in\delta^-(S)}x_{ij}=\sum_{ij\in\delta^+(S)}x_{ij}$, which completes the proof.
\end{proof}

Noting that any $x\in P_\ap$ satisfies $\sum_{ij\in A(S)}x_{ij}+\sum_{ij\in\delta^+(S)}x_{ij}=|S|$ for all $S\in\mathcal S_1$, we have
\begin{equation}\label{eq:SCF_proj}
   P_\scf(b)=\left\{x\in P_{\text{AP}}\tq \sum_{ij\in A(S)}x_{ij}\leq |S|-\sum_{i\in S}b_i\   \textup{for all } S\in\mathcal S_1\right\}. 
\end{equation}
The inequalities in~\eqref{eq:SCF_proj} generalize the Weak clique inequalities \eqref{eq:weak_clique_ineq}.

\begin{proposition}
For each $b\in \overline B$ and $S\in\mathcal S_1$, the inequality $\sum_{ij\in\delta^+(S)}x_{ij}\geq\sum_{i\in S}b_i$ defines a facet of $P_\scf(b)$ if and only if $\sum_{i\in S}b_i>0$ and $S\neq N_1$.
\end{proposition}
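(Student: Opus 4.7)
The plan is to mirror the two-step strategy used in Propositions~\ref{prop:facetsPmtz} and~\ref{prop:facetsPdl}: exhibit a relative interior point of $P_\scf(b)$ so that $P_\scf(b)$ is full-dimensional within the affine hull of $P_\ap$, and then prove each direction by showing the indicated inequality is either redundant (no facet) or non-redundant (hence facet, by full-dimensionality).

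For the ``only if'' direction I would dispense with the two excluded cases directly. If $\sum_{i\in S}b_i=0$, then $b\in\RR_+^{N_1}$ forces $b_i=0$ for every $i\in S$, and the inequality $\sum_{ij\in\delta^+(S)}x_{ij}\ge 0$ is a nonnegative combination of the nonnegativity constraints $x_{ij}\ge 0$ for $ij\in\delta^+(S)$, hence cannot define a facet. If $S=N_1$, the assignment equalities yield $\sum_{ij\in\delta^+(N_1)}x_{ij}=\sum_{j\in N_1}x_{j1}=1$, which equals $\sum_{i\in N_1}b_i=1$ for any $b\in\overline B$, so the inequality is implied by the equalities of $P_\ap$.

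For the ``if'' direction I would first verify full-dimensionality via the candidate interior point $\bar x=\tfrac{1}{n-1}\ones\in P_\ap$. A direct computation gives $\sum_{ij\in\delta^+(S')}\bar x_{ij}=|S'|(n-|S'|)/(n-1)$, and for any $S'\in\mathcal S_1$ with $|S'|\le n-2$ this is at least $2(n-2)/(n-1)>1\ge\sum_{i\in S'}b_i$ since $n\ge 4$. Thus $\bar x$ strictly satisfies every nonnegativity constraint and every inequality of $P_\scf(b)$ indexed by $S'\ne N_1$, so $\mathrm{aff}(P_\scf(b))=\mathrm{aff}(P_\ap)$ and it suffices to show that the inequality indexed by $S$ (with $\sum_{i\in S}b_i>0$ and $S\ne N_1$) is non-redundant. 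Following the template of Proposition~\ref{prop:facetsPmtz}, I would let $\hat C$ be a directed Hamiltonian cycle on $S$ and $\bar C$ a directed Hamiltonian cycle on $N\setminus S$ (both exist since $|S|\ge 2$ and $|N\setminus S|\ge 2$, the latter because $S\ne N_1$ forces $|S|\le n-2$), and set $\hat x=\ones_{\hat C}+\ones_{\bar C}\in P_\ap\cap\{0,1\}^A$. By construction $\sum_{ij\in\delta^+(S)}\hat x_{ij}=0<\sum_{i\in S}b_i$, so the chosen inequality is violated.

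The main obstacle, and the step deserving the most care, is verifying that $\hat x$ satisfies every other inequality of $P_\scf(b)$. Since $\sum_{i\in S'}b_i\le\sum_{i\in N_1}b_i=1$ for all $S'\in\mathcal S_1$, it suffices to prove $\sum_{ij\in\delta^+(S')}\hat x_{ij}\ge 1$ whenever $S'\ne S$. The count equals the number of arcs of $\hat C$ from $S\cap S'$ to $S\setminus S'$ plus the number of arcs of $\bar C$ from $(N\setminus S)\cap S'$ to $(N\setminus S)\setminus S'$, and each summand is a positive integer whenever the associated intersection is a proper nonempty subset of $S$ or $N\setminus S$, respectively. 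A short case analysis on the relative position of $S'$ and $S$ closes the argument: since $1\in N\setminus S$ and $1\notin S'$, one always has $(N\setminus S)\cap S'\subsetneq N\setminus S$; and the assumption $S'\ne S$ rules out the only configuration ($S\subseteq S'\subseteq S$) in which both summands could vanish. In every remaining configuration at least one summand is $\ge 1$, so $\hat x$ witnesses the non-redundancy of the inequality for $S$, completing the proof.
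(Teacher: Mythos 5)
Your proposal is correct and follows essentially the same route as the paper: the same witness point $\hat x=\ones_{\hat C}+\ones_{\bar C}$ built from cycles on $S$ and $N\setminus S$ to show non-redundancy, the same interior point $\frac{1}{n-1}\ones$ to show the inequality is not an implicit equality, and the same dismissal of the cases $\sum_{i\in S}b_i=0$ and $S=N_1$. Your explicit cut-counting verification that $\hat x$ satisfies the inequalities for all $S'\neq S$ is a welcome elaboration of a step the paper merely asserts, but it is not a different argument.
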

\begin{proof}
Given $\hat S\in\mathcal S_1$ with $\sum_{i\in S}b_i>0$ and $\hat S\neq N_1$, let $\hat C$ and $\bar C$ be directed cycles covering all nodes in $\hat S$ and in $N\setminus \hat S$, respectively, and consider $\hat x=\ones_{\hat C}+\ones_{\bar C}\in\{0,1\}^A$. We have that $\hat x\in P_\ap$ and satisfies all the constraints in the definition of $P_\scf(b)$, with the exception of that associated to $\hat S$ as $\sum_{ij\in\delta^+(S)}\hat x_{ij}=0$, and therefore this constraint is non-redundant. In addition, this inequality is satisfied strictly by $\frac{1}{n-1}\ones \in P_\scf(b)$ since
$$\sum_{ij\in \delta^+(S)}\frac{1}{n-1}=|S|(n-|S|)\frac{1}{n-1}\geq 2(n-2)\frac{1}{n-1}>1\geq\sum_{i\in S}b_i,$$
where the first inequality follows from $2\leq|S|\leq n-2$ and the second from $n\geq 4$. We conclude the constraint defines a facet of $P_\scf(b)$. 

If $\sum_{i\in S}b_i=0$, the inequality is implied by $x_{ij}\geq 0$ for $ij\in\delta^+(S)$. If $S=N_1$, the corresponding inequality is redundant (see Observation \ref{obs_c1}).
\end{proof}

\subsection{Comparing $P_\scf(b)$ for different $b\in \bar B$}

The case of $b$-SCF formulations is somewhat simpler than the case of $d$-MTZ or $d$-DL formulations. The following result shows that $b$-SCF formulations are never comparable.

\begin{proposition}
For any $b,b'\in B$ with $b\neq b'$, $P_\scf(b)$ and $P_\scf(b')$ are not comparable.
\end{proposition}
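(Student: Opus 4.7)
The plan is to exploit the characterization $P_\scf(b)=\{x\in P_\ap\tq \sum_{ij\in\delta^+(S)}x_{ij}\geq\sum_{i\in S}b_i\ \forall S\in\mathcal S_1\}$ together with the preceding facet proposition: it suffices to find two subsets $S,S'\in\mathcal S_1$, both distinct from $N_1$, on which $b$ and $b'$ disagree in opposite directions, and then to produce witnesses of incomparability from the relative interiors of the corresponding facets.

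First I would locate the indices driving the disagreement. Since $b\neq b'$ and $\sum_{i\in N_1}b_i=\sum_{i\in N_1}b'_i=1$, the vector $b-b'$ is nonzero and sums to $0$, so there exist $k,l\in N_1$ with $b_k>b'_k$ and $b_l<b'_l$. Set $S=N_1\setminus\{l\}$ and $S'=N_1\setminus\{k\}$. Both sets have size $n-2\geq 2$ (using $n\geq 4$), so $S,S'\in\mathcal S_1$, and both are proper subsets of $N_1$. A direct computation gives
\[
\sum_{i\in S}b_i=1-b_l>1-b'_l=\sum_{i\in S}b'_i,\qquad \sum_{i\in S'}b_i=1-b_k<1-b'_k=\sum_{i\in S'}b'_i.
\]

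Next I would turn these strict comparisons into actual incomparability. Consider the constraint of $P_\scf(b')$ indexed by $S$: since $b'>0$ implies $\sum_{i\in S}b'_i>0$ and $S\neq N_1$, the facet proposition (applied with parameter $b'$) says that $\sum_{ij\in\delta^+(S)}x_{ij}\geq\sum_{i\in S}b'_i$ is facet-defining for $P_\scf(b')$. Pick a point $\bar x$ in the relative interior of this facet; then $\sum_{ij\in\delta^+(S)}\bar x_{ij}=\sum_{i\in S}b'_i<\sum_{i\in S}b_i$, so $\bar x\in P_\scf(b')\setminus P_\scf(b)$, ruling out $P_\scf(b')\subseteq P_\scf(b)$. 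The symmetric argument with $S'$ and the roles of $b,b'$ swapped produces a point in $P_\scf(b)\setminus P_\scf(b')$, ruling out $P_\scf(b)\subseteq P_\scf(b')$.

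The only substantive verification is that we can always take $S$ and $S'$ of cardinality $n-2$ (so that $|S|,|S'|\geq 2$ and $S,S'\neq N_1$ hold simultaneously); this works cleanly because $n\geq 4$. The rest is bookkeeping with the explicit projection formula and the existing facet characterization, so I do not expect any real obstacle beyond choosing $S,S'$ carefully.
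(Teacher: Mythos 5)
Your proof is correct and follows essentially the same route as the paper's: use the projection formula for $P_\scf(\cdot)$ to exhibit two sets in $\mathcal S_1$ whose right-hand sides compare in opposite directions for $b$ and $b'$, then invoke the facet characterization to turn each strict inequality into a witness of non-inclusion. The only (cosmetic) difference is the choice of witnesses: you take the complements of singletons $N_1\setminus\{l\}$ and $N_1\setminus\{k\}$ where $b$ and $b'$ disagree, while the paper sorts $\eta=b'-b$ and uses the two-element sets $\{2,3\}$ and $\{n-1,n\}$; both choices satisfy $2\leq|S|\leq n-2$ and $S\neq N_1$, so the argument goes through either way.
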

\begin{proof}
Given $b,b'\in B$ with $b\neq b'$, let $\eta=b'-b$. Without loss of generality, assume that $\eta_2\leq \eta_3\leq\cdots\leq\eta_{n-1}\leq\eta_n$. Since $b\neq b'$ and $\sum_{i\in N_1}\eta_i=0$, we have $\eta_2<0$ and $\eta_n>0$. We will show now that $\eta_2+\eta_3<0$. If $\eta_3\leq0$, we are done. If $\eta_3>0$, then $\sum_{i\in N_1\setminus\{2,3\}}\eta_i>0$, and we obtain $\eta_{2}+\eta_{3}=-\sum_{i\in N_1\setminus\{2,3\}}\eta_i<0$, as desired. Similarly, we have $\eta_{n-1}+\eta_n>0$. Let $S=\{2,3\}$ and $S'=\{n-1,n\}$. We have $\sum_{ij\in\delta^+(S)}x_{ij}\geq\sum_{i\in S}b_i>\sum_{i\in S}b'_i$. Since $\sum_{ij\in\delta^+(S)}x_{ij}\geq\sum_{i\in S}b'_i$ and $\sum_{ij\in\delta^+(S)}x_{ij}\geq\sum_{i\in S}b_i$ define facets of $P_\scf(b')$ and $P_\scf(b)$, respectively, this shows that $P_\scf(b)\nsubseteq P_\scf(b')$. Analogously, we have $\sum_{ij\in\delta^+(S')}x_{ij}\geq\sum_{i\in S'}b'_i>\sum_{i\in S'}b_i$. Since $\sum_{ij\in\delta^+(S')}x_{ij}\geq\sum_{i\in S'}b_i$ and $\sum_{ij\in\delta^+(S')}x_{ij}\geq\sum_{i\in S'}b'_i$ define facets of $P_\scf(b)$ and $P_\scf(b')$, respectively, this shows that $P_\scf(b')\nsubseteq P_\scf(b)$.
\end{proof}

\subsection{The intersection of all $b$-SCF formulations}\label{sec:closure_d-SCFformulations}


Given nonempty $B'\subseteq \overline B$, let
$$\Ef(Q_\scf(B'))=\left\{(x,f)\in P_\ap\times\left(\RR^{A}\right)^{B'}\tq (x,f(b))\in Q_\scf(b)\ \forall b\in B'\right\}.$$

\begin{proposition}\label{prod_SCF}
    If for each $i\in N_1$, there exists $b\in B'$ such that $b_i>0$, then no integer $x\in \proj_x(\Ef(Q_\scf(B')))$ can define subtours. 
    In particular, if $B'$ is finite, then $\Ef(Q_\scf(B'))$ is a valid extended formulation for the ATSP.
\end{proposition}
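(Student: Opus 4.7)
The plan is to mimic the strategy used in Proposition \ref{prod_mtz} by first translating the condition ``$x\in\proj_x(\Ef(Q_\scf(B')))$'' into a family of linear inequalities on $x$ via Lemma \ref{lemma:operators_relation}, and then arguing that these inequalities rule out subtours.

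More precisely, by Lemma \ref{lemma:operators_relation}, we have
$$\proj_x(\Ef(Q_\scf(B')))=\Cl(P_\scf(B'))=\bigcap_{b\in B'}P_\scf(b).$$
Applying the characterization of Proposition \ref{prop:Pscf(b)} to each term, any $x$ in this intersection satisfies
$$\sum_{ij\in\delta^+(S)}x_{ij}\ \geq\ \max_{b\in B'}\sum_{i\in S}b_i\quad\textup{for all}\ S\in\mathcal S_1.$$

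Now suppose $x$ is integer, $x\in\bigcap_{b\in B'}P_\scf(b)$, and $x$ defines a subtour. Since $x\in P_\ap\cap\ZZ^A$, the arcs with $x_{ij}=1$ decompose $N$ into vertex-disjoint directed cycles, and a subtour exists precisely when at least one such cycle is supported on a set $S\subseteq N_1$ with $|S|\geq 2$, i.e.\ $S\in\mathcal S_1$. For that $S$, no arc leaves $S$, so $\sum_{ij\in\delta^+(S)}x_{ij}=0$. On the other hand, pick any $i\in S$; by hypothesis there exists $b\in B'$ with $b_i>0$, and since all components of vectors in $B'\subseteq\overline B\subseteq\RR^{N_1}_+$ are nonnegative, $\sum_{i\in S}b_i\geq b_i>0$. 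This contradicts the displayed inequality above, and hence no such subtour can exist.

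For the second assertion, observe that if $B'$ is finite, then $\Ef(Q_\scf(B'))$ is a genuine polyhedron in a finite-dimensional space, namely a subset of $P_\ap\times (\RR^{A}_+)^{B'}$ defined by finitely many linear equalities and inequalities, so it is a meaningful (polyhedral) extended formulation. Together with the first part, any integer $x\in\proj_x(\Ef(Q_\scf(B')))$ lies in $P_\ap\cap\ZZ^A$ and induces no cycle contained in $N_1$; consequently the unique cycle through node $1$ in the assignment defined by $x$ must visit every node, so $x$ is the incidence vector of a Hamiltonian tour. Hence $\Ef(Q_\scf(B'))$ is a valid extended formulation for the ATSP. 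The only subtle point, and the one I would be most careful about, is the step tying ``no cycle on $S\subseteq N_1$'' back to ``$x$ is a tour''; this is a routine consequence of the assignment constraints, but it is worth stating explicitly because the condition on $B'$ is essential: without some $b\in B'$ being positive on each $i\in N_1$, one could not exclude a subtour supported on the ``bad'' nodes.
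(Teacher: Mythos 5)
Your proof is correct and follows essentially the same route as the paper: both reduce to the projection characterization of $P_\scf(b)$ from Proposition~\ref{prop:Pscf(b)}, intersect over $b\in B'$, and use the hypothesis to show the resulting cut (equivalently, clique) inequality for each $S\in\mathcal S_1$ has a strictly positive right-hand side, which an integer assignment solution with a subtour on $S$ would violate. Your added remarks on attaining the maximum and on why excluding subtours yields a tour only make explicit steps the paper leaves implicit.
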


\begin{proof}
    We have that $\proj_x(\Ef(Q_\scf(B')))=\bigcap_{b\in B'}\proj_x(Q_\scf(b))$. Therefore, by Proposition \ref{prop:Pscf(b)}, for any $x\in\proj_x(Q_\scf(B'))$ and $S\in\mathcal S_1$, it holds that $\sum_{ij\in\delta^+(S)}x_{ij}\geq \sum_{i\in S}b_i$ for all $b\in B'$, or equivalently, $\sum_{ij\in A(S)}x_{ij}\leq |S|-\max\{\sum_{i\in S}b_i\tq b\in B'\}$. By hypothesis, the right-hand side is strictly less than $|S|$. Thus, no integer $x\in \proj_x(Q_\scf(B'))$ can define subtours and the last assertion follows.
\end{proof}

For each $k\in N_1$, let $b^k\in\overline B$ be the $k$-canonical vector, and define $V_\scf=\{b^k\tq k\in N_1\}$. Clearly, $V_\scf$ is the set of vertices of $\overline B$ (see Observation~\ref{obs_nphard}).

\begin{theorem}\label{cl_scf}
We have that $\Cl(P_\scf(B))=\Cl(P_\scf(V_\scf))=\overline P_\scf$, where
$$\overline P_\scf=\left\{x\in P_{\text{AP}}\tq \sum_{ij\in\delta^+(S)}x_{ij}\geq 1\   \textup{for all } S\in\mathcal S_1\right\}.$$
\end{theorem}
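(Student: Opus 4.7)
The plan is to use the machinery already developed for parametric closures together with the explicit description of $P_\scf(b)$ from Proposition~\ref{prop:Pscf(b)}. The first step is to reduce the closure over $B$ to the closure over the finite vertex set $V_\scf$. Since $B$ is the (relatively open) standard simplex, its closure is the polytope $\overline B$, and by Observation~\ref{obs_nphard} the vertices of $\overline B$ are exactly the canonical vectors $b^k$ for $k\in N_1$, i.e.\ the set $V_\scf$. Applying Lemma~\ref{lem:robustprops}(1) we get $\Cl(P_\scf(B))=\Cl(P_\scf(\overline B))$, and then Corollary~\ref{prod_V} gives $\Cl(P_\scf(\overline B))=\Cl(P_\scf(V_\scf))$. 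So it remains to identify $\Cl(P_\scf(V_\scf))$ with $\overline P_\scf$.

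For this second step I would compute $P_\scf(b^k)$ explicitly from Proposition~\ref{prop:Pscf(b)}. For each $S\in \mathcal S_1$ the quantity $\sum_{i\in S}b^k_i$ equals $1$ if $k\in S$ and $0$ otherwise. Hence
\[
P_\scf(b^k)=\left\{x\in P_\ap\tq \sum_{ij\in\delta^+(S)}x_{ij}\geq 1\ \textup{for all}\ S\in\mathcal S_1\ \textup{with}\ k\in S\right\},
\]
the trivial inequalities $\sum_{ij\in\delta^+(S)}x_{ij}\geq 0$ being redundant for $P_\ap\subseteq\RR_+^A$.

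Finally, intersecting these sets over $k\in N_1$ activates the cut inequality $\sum_{ij\in\delta^+(S)}x_{ij}\geq 1$ for every $S\in\mathcal S_1$: indeed, any $S\in\mathcal S_1$ contains at least one $k\in N_1$ (in fact, it contains at least two), so the inequality associated with $S$ appears in $P_\scf(b^k)$ for any such $k$. Thus
\[
\Cl(P_\scf(V_\scf))=\bigcap_{k\in N_1}P_\scf(b^k)=\overline P_\scf,
\]
completing the proof. I expect no real obstacle here: the argument is a direct specialization of the general framework already established, and the key observation is merely that the support sets of the canonical vectors $b^k$ collectively cover every $S\in\mathcal S_1$.
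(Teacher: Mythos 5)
Your proposal is correct and follows essentially the same route as the paper: compute $P_\scf(b^k)$ from Proposition~\ref{prop:Pscf(b)} and intersect over $k\in N_1$, observing that every $S\in\mathcal S_1$ contains some $k$. The only (equally valid) difference is in how you pass from $B$ to $V_\scf$: you invoke Corollary~\ref{prod_V} using that $V_\scf$ is exactly the vertex set of $\overline B$, whereas the paper sandwiches via the direct inclusion $\overline P_\scf\subseteq P_\scf(b)$ for all $b\in B$ together with $\Cl(P_\scf(\overline B))\subseteq\Cl(P_\scf(V_\scf))$.
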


\begin{proof}
We have $\overline P_\scf\subseteq P_\scf(b)$ for all $b\in B$, and thus $\overline P_\scf\subseteq \Cl(P_\scf(B))=\Cl(P_\scf(\overline B))$. To show that $\Cl(P_\scf(\overline B))=\overline P_\scf$, note that for any $k\in N_1$,
\begin{align*}
P_\scf(b^k)&=\left\{x\in P_{\text{AP}}\tq \sum_{ij\in\delta^+(S)}x_{ij}\geq \sum_{i\in S}b^k_i\   \textup{for all } S\in\mathcal S_1\right\}\\
&=\left\{x\in P_{\text{AP}}\tq \sum_{ij\in\delta^+(S)}x_{ij}\geq 1\   \textup{for all } S\in\mathcal S_1\tq k\in S\right\}.
\end{align*}
Therefore,
\begin{align*}
\Cl(P_\scf(V_\scf))&=\bigcap_{k\in N_1}P_\scf(b^k)\\
&=\left\{x\in P_{\text{AP}}\tq \sum_{ij\in\delta^+(S)}x_{ij}\geq 1\   \textup{for all } S\in\mathcal S_1,\ \textup{for all } k\in N_1\tq k\in S\right\}\\\
&=\left\{x\in P_{\text{AP}}\tq \sum_{ij\in\delta^+(S)}x_{ij}\geq 1\   \textup{for all } S\in\mathcal S_1\right\}=\overline P_\scf.
\end{align*}
\end{proof}
Note that $\overline P_\scf$ can be equivalently written as
$$\overline P_\scf=\left\{x\in P_{\text{AP}}\tq \sum_{ij\in A(S)}x_{ij}\leq |S|-1\ \textup{for all } S\in\mathcal S_1\right\},$$
and in particular, we recover the DFJ formulation. Below we recover  MCF as an extended formulation.

\begin{corollary}
We have that $\overline P_\scf=\proj_x(\overline Q_\scf)$, where
$$\overline Q_\scf=\left\{(x,(f^k\tq k\in N_1))\in P_\ap\times\RR^{N_1\times A}_+\tq \begin{array}{rl}\displaystyle\sum_{ij\in\delta^+(i)}f^k_{ij}-\sum_{ji\in\delta^-(i)}f^k_{ji}=-\delta_{ik}& \textup{for all}\ i,k\in N_1\\
\displaystyle f_{ij}\leq x_{ij}& \textup{for all}\ ij\in A
\end{array}\right\}.$$
and $\delta_{ik}=1$ if $i=k$ and $\delta_{ik}=0$ else.
\end{corollary}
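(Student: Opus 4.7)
The plan is to observe that $\overline Q_\scf$ is, essentially by construction, the set $\Ef(Q_\scf(V_\scf))$, and then chain the general identities from Section~\ref{sec:prelim} with Theorem~\ref{cl_scf}. First, I would unpack the definition of $Q_\scf(b^k)$ for $b^k \in V_\scf$: since $b^k_i = 1$ if $i = k$ and $0$ otherwise, the flow-balance right-hand side at node $i \in N_1$ becomes exactly $-\delta_{ik}$. Taking one copy of the flow vector $f^k$ per parameter $b^k$, as prescribed by the definition of $\Ef(\cdot)$, yields precisely the system defining $\overline Q_\scf$. Hence $\overline Q_\scf = \Ef(Q_\scf(V_\scf))$.

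Second, I would invoke Theorem~\ref{cl_scf}, which gives $\overline P_\scf = \Cl(P_\scf(V_\scf))$, together with Lemma~\ref{lemma:operators_relation}, which states that $\proj_x(\Ef(Q_\scf(\PP))) = \Cl(P_\scf(\PP))$ for any set of parameters $\PP$. Applying this to $\PP = V_\scf$ gives
\[
\proj_x(\overline Q_\scf) \;=\; \proj_x(\Ef(Q_\scf(V_\scf))) \;=\; \Cl(P_\scf(V_\scf)) \;=\; \overline P_\scf,
\]
as desired. (One could equivalently cite Corollary~\ref{prod_V}, since $\overline B$ is a polytope whose vertex set is exactly $V_\scf$ by Observation~\ref{obs_nphard}.)

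There is no real obstacle here: the statement is a direct specialization of the general machinery developed in Section~\ref{sec:prelim}, and the only verification needed is the routine bookkeeping that identifies $\overline Q_\scf$ with $\Ef(Q_\scf(V_\scf))$ under the indexing convention $b = b^k$. The proof is essentially a citation chain.
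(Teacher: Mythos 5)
Your proof is correct and follows essentially the same route as the paper: identify $\overline Q_\scf$ with $\Ef(Q_\scf(V_\scf))$ and then combine Theorem~\ref{cl_scf} with the projection identity (the paper cites Corollary~\ref{prod_V}, you cite Lemma~\ref{lemma:operators_relation} directly, which is the same mechanism). No gaps.
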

\begin{proof}
Noting that $\overline Q_\scf=\Ef Q_\scf(V_\scf))$, the result follows from Theorem~\ref{cl_scf} and Corollary~\ref{prod_V}.
\end{proof}

\section{Comparing closures}\label{sec:comp_closures}

Below, we formalize how the closures we have introduced in previous sections relate.

\begin{proposition}\label{prop:comparing_closures}
The following hold:
\begin{enumerate}
    \item For $n\geq 4$, $\Cl(P_\scf(B))\subseteq\Cl(P_\dl(D))\subseteq\Cl(P_\dl(V_\mtz))\subseteq\Cl(P_\mtz(D))$.
    \item For $n=4$, $\Cl(P_\scf(B))=\Cl(P_\dl(D))=\Cl(P_\dl(V_\mtz))=\Cl(P_\mtz(D))$.
    \item For $n\geq 5$, for any $\hat C\in\mathcal C_1$ with $3\leq|\hat C|\leq n-2$ and for any $k\in \hat C$, there exists $\hat x\in \Cl(P_\mtz(D))$ that violates $\sum_{ij\in \hat C}(x_{ij}+x_{ji})-x_{k^+k}-x_{kk^-}\leq |\hat C|-1$. In particular, $\Cl(P_\dl(V_\mtz))\subsetneq \Cl(P_\mtz(D))$.
    \item For $n\geq 5$, for any $\hat C\in\mathcal C_1$ with $3\leq|\hat C|\leq n-2$ and for any $kl\in \hat C$, there exists $\hat x\in \Cl(P_\dl(V_\mtz))$ that violates $\sum_{ij\in \hat C}(x_{ij}+x_{ji})-x_{lk}\leq |\hat C|-1$. In particular, $\Cl(P_\dl(D))\subsetneq \Cl(P_\dl(V_\mtz))$.
    \item For $n\geq 5$, for any $\hat S\in\mathcal S_1$ with $3\leq|S|\leq n-2$, there exists $\hat x\in \Cl(P_\dl(D))$ that violates $\sum_{ij\in A(S)}x_{ij}\leq |\hat C|-1$. In particular, $\Cl(P_\scf(B))\subsetneq \Cl(P_\dl(D))$.
\end{enumerate}
\end{proposition}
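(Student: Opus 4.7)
The plan is to dispatch the five parts using the explicit polyhedral descriptions of the four closures obtained in Theorems~\ref{cl_mtz}, \ref{cl_dl}, \ref{cl_scf} and Proposition~\ref{dl_mtz}: parts 1 and 2 by direct inequality manipulation, and parts 3--5 by constructing, for each strict containment, a fractional point $\hat x$ lying in the larger closure but violating the prescribed facet of the smaller one.

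For part 1, I would verify each inclusion by discarding nonnegative terms. To show $\Cl(P_\scf(B))\subseteq\Cl(P_\dl(D))$, given $C\in\mathcal C_1$ with $|C|\geq 3$ and $kl\in C$, taking $S$ equal to the vertex set of $C$ yields $\sum_{ij\in C}(x_{ij}+x_{ji})\leq\sum_{ij\in A(S)}x_{ij}\leq|S|-1=|C|-1$, and then $x_{lk}\geq 0$ gives the $\Cl(P_\dl(D))$-inequality; the trivial constraint $x_{ij}+x_{ji}\leq 1$ is the clique inequality for $S=\{i,j\}$. To show $\Cl(P_\dl(D))\subseteq\Cl(P_\dl(V_\mtz))$, each defining inequality $\sum_{ij\in C}(x_{ij}+x_{ji})-x_{kk^-}-x_{k^+ k}\leq|C|-1$ is obtained from the $\Cl(P_\dl(D))$-inequality for the arc $k^- k\in C$ (which already subtracts $x_{kk^-}$) by dropping the additional nonnegative term $x_{k^+ k}$. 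To show $\Cl(P_\dl(V_\mtz))\subseteq\Cl(P_\mtz(D))$, I would rewrite the $V_\mtz$-inequality as $\sum_{ij\in C}x_{ij}+\sum_{ij\in C,\,ij\notin\{k^- k,\,k k^+\}}x_{ji}\leq|C|-1$ and recover the circuit inequality by dropping the nonnegative tail; the $|C|=2$ case coincides with $x_{ij}+x_{ji}\leq 1$. For part 2, when $n=4$ cycles in $\mathcal C_1$ have length $2$ or $n-1=3$ and subsets in $\mathcal S_1$ have size $2$ or $n-1=3$; using the identity $\sum_{ij\in A_1}x_{ij}=n-2$ valid on $P_\ap$, all size-$(n-1)$ constraints appearing in the four descriptions are tight consequences of $P_\ap$ and hence redundant, so every closure reduces to $P_\ap\cap\{x_{ij}+x_{ji}\leq 1:ij\in A_1\}$; combined with part 1 this forces equality throughout.

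For parts 3--5 I would use a common construction template: fix the values of $\hat x$ on the arcs naturally associated with $\hat C$ or $\hat S$, set $\hat x=0$ on the remaining arcs of $A_1$ internal to the same node set, and complete $\hat x$ to a point of $P_\ap$ by placing an arbitrary Hamiltonian cycle on the complement, which has at least two nodes since $|\hat C|,|\hat S|\leq n-2$. Writing $m$ for $|\hat C|$ or $|\hat S|$, in part 3 I would set $\hat x_{ij}=\tfrac{m-1}{m}$ for $ij\in\hat C$ and $\hat x_{ji}=\tfrac{1}{m}$ for $ij\in\hat C$; in part 4 I would set $\hat x_{ij}=\hat x_{ji}=\tfrac{1}{2}$ on $\hat C$; in part 5 I would set $\hat x_{ij}=\lambda$ for every $ij\in A(\hat S)$, with $\lambda=\tfrac{1}{m-1}$ when $m\geq 4$ (so that $\hat S$ becomes flow-isolated) and $\lambda=\tfrac{2}{5}$ when $m=3$ (with the remaining $\tfrac{1}{5}$ of each row and column sum distributed uniformly over $N\setminus\hat S$). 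Membership in the larger closure is then verified cycle by cycle, exploiting that in each construction only $\hat C$, $\revcycle{\hat C}$, the $2$-cycles on consecutive $\hat C$-arcs, and the completion tour carry nonzero support, so every other cycle in $\mathcal C_1$ includes a zero-valued arc and the corresponding inequality is automatically slack. In part 3 the circuit inequality for $\hat C$ is tight at $m-1$, while the target $V_\mtz$-type LHS evaluates to $m(\tfrac{m-1}{m}+\tfrac{1}{m})-2\cdot\tfrac{1}{m}=m-\tfrac{2}{m}>m-1$ for $m\geq 3$; in part 4 the $V_\mtz$-inequalities on $\hat C$ and $\revcycle{\hat C}$ are tight at $m-1$, while the target inequality subtracting only $x_{lk}=\tfrac{1}{2}$ gives $m-\tfrac{1}{2}>m-1$; in part 5 the DL-type inequality for any cycle of length $q\geq 3$ inside $\hat S$ reduces to $(2q-1)\lambda\leq q-1$ (tightest at $q=3$) and the clique inequality evaluates to $m(m-1)\lambda>m-1$ since $\lambda>\tfrac{1}{m}$.

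The main technical obstacle will be the case $m=3$ of part 5, because the boundary of $\hat S$ then carries nonzero flow $\mu$ and one must check the DL-type inequalities for the mixed cycles alternating between vertices of $\hat S$ and of $N\setminus\hat S$; I would handle this by distributing both the residual row sums and the Hamiltonian completion symmetrically so that every mixed cycle picks up at least two crossing arcs, each of value $\mu$ small enough to keep the inequality strict. All other verifications reduce to inspecting the short list of cycles with nonzero support and are routine.
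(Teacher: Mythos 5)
Your proof follows the paper's argument essentially verbatim for parts 1--4: the same chain of term-dropping inequalities for part 1, the same redundancy observation based on $\sum_{ij\in A_1}x_{ij}=n-2$ for part 2, and the identical witness points for parts 3 and 4 (values $\tfrac{m-1}{m}$ on $\hat C$ and $\tfrac1m$ on $\revcycle{\hat C}$, respectively value $\tfrac12$ on $\hat C\cup\revcycle{\hat C}$, completed by a unit tour on the remaining nodes). The only genuine divergence is in part 5. The paper's witness is supported on $\hat C\cup\revcycle{\hat C}$ with value $\tfrac{|\hat C|-1}{2|\hat C|-1}$ and routes the residual $\tfrac{1}{2|\hat C|-1}$ of each node of $\hat S$ through node $1$ (inbound) and a single node $h$ of the complementary tour (outbound); this one construction covers all $3\leq|\hat S|\leq n-2$. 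You instead spread the mass uniformly over all of $A(\hat S)$, which for $|\hat S|\geq 4$ makes $\hat S$ flow-isolated and yields a cleaner verification (every mixed cycle contains an arc with $x_{ij}+x_{ji}=0$) together with a larger violation ($\sum_{ij\in A(\hat S)}\hat x_{ij}=|\hat S|$), but it forces the separate case $|\hat S|=3$ with nonzero crossing arcs that you only sketch. That sketch does close: with $\lambda=\tfrac25$ on $A(\hat S)$ and crossing value $\mu=\tfrac{1}{5(n-3)}$, the internal $3$-cycles are tight at $2$, and any mixed cycle of length $q$ has at least two crossing arcs, so the needed slack of $1$ is covered by $2(1-2\mu)>1$; cycles in the complement avoiding node $1$ always contain a zero-valued arc. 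So your route is correct, trading the paper's uniform but more intricate flow-routing for a simpler generic case plus an explicit small case.
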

\begin{proof}

\begin{enumerate}
    \item Given $S\in\mathcal S_1$ and $C\in\mathcal C_1$ with $C\subseteq A(S)$ and $|C|=|S|\geq 3$, we have $\sum_{ij\in A(S)}x_{ij}\geq \sum_{ij\in C}(x_{ij}+x_{ji})-x_{k^+k}\geq \sum_{ij\in C}(x_{ij}+x_{ji})-x_{k^+k}-x_{kk^-}\geq \sum_{ij\in C }x_{ij}$ for any $x\in P_\ap$ and $k\in S$.
    
    \item From Observation~\ref{obs_c1}, we have that all four above sets are equal to $\{x\in P_\ap\tq x_{ij}+x_{ji}\leq 1\ \textup{for all}\ ij\in A_1\}$ since the remaining constraints are redundant in each case.
    
    \item Given $\hat C\in\mathcal C_1$ with $3\leq |\hat C|\leq n-2$, let $\tilde C\subseteq A$ be any directed cycle on the remaining nodes of $N$, which are at least 2. Define $\hat x\in\RR^A$ by $\hat x_{ij}=\frac{|\hat C|-1}{|\hat C|}$ if $ij\in \hat C$, $\hat x_{ij}=\frac{1}{|\hat C|}$ if $ji\in \hat C$, $\hat x_{ij}=1$ if $ij\in\tilde C$, and $\hat x_{ij}=0$ else. Clearly, we have $\hat x\in P_\ap$. Now, let $C\in\mathcal C_1$. If $C=\hat C$, then $\sum_{ij\in C}\hat x_{ij}= |C|-1$. If $C\neq \hat C$, then $\sum_{ij\in C}\hat x_{ij}= \sum_{ij\in C\cap(\hat C\cup \tilde C)}\hat x_{ij}\leq |C|-1$. Therefore, $\hat x\in\Cl(P_\mtz(D))$. However, $\sum_{ij\in \hat C}(\hat x_{ij}+\hat x_{ji})-\hat x_{k^+k}-\hat x_{kk^-}=|\hat C|-\frac{2}{|\hat C|}>|\hat C|-1$ for any $k\in \hat C$.

    \item Given $\hat C\in\mathcal C_1$ with $3\leq |\hat C|\leq n-2$, let $\tilde C\subseteq A$ be any directed cycle on the remaining nodes of $N$, which are at least 2. Define $\hat x\in\RR^A$ by $\hat x_{ij}=\frac{1}{2}$ if $ij\in \hat C$ or $ji\in \hat C$, $\hat x_{ij}=1$ if $ij\in\tilde C$, and $\hat x_{ij}=0$ else. Clearly, we have $\hat x\in P_\ap$ and $\hat x_{ij}+\hat x_{ji}\leq 1$ for all $ij\in A_1$. Now, let $C\in\mathcal C_1$ with $|C|\geq 3$ and let $k\in C$. If $C=\hat C$, then $\sum_{ij\in C}(\hat x_{ij}+\hat x_{ji})-\hat x_{k k^C_-}-\hat x_{k^C_+ k}= |C|-1$. If $C\neq \hat C$, then $\sum_{ij\in C}(\hat x_{ij}+\hat x_{ji})-\hat x_{k k^C_-}-\hat x_{k^C_+ k}\leq \sum_{ij\in C\cap(\hat C\cup \tilde C)}\hat x_{ij}\leq |C|-1$. Therefore, $\hat x\in\Cl(P_\dl(V_\mtz))$. However, $\sum_{ij\in \hat C}(\hat x_{ij}+\hat x_{ji})-\hat x_{lk}=|\hat C|-\frac{1}{2}>|\hat C|-1$ for any $kl\in \hat C$.

    \item Given $\hat S\in\mathcal S_1$ with $3\leq |\hat S|\leq n-2$, let $\hat C\in \mathcal C_1$ be any directed cycle on $\hat S$. Also, let $\tilde C\subseteq A$ be any directed cycle on the remaining nodes of $N$, which are at least 2. Let $h\in N\setminus \hat S$ be the node succeeding node 1 in $\tilde C$ and define $\hat x\in\RR^A$ by $\hat x_{ij}=\frac{|\hat C|-1}{2|\hat C|-1}$ if $ij\in \hat C$ or $ji\in \hat C$, $\hat x_{1j}=\frac{1}{2|\hat C|-1}$ if $j\in \hat S$, $\hat x_{ih}=\frac{1}{2|\hat C|-1}$ if $i\in \hat S$, $x_{1h}=\frac{|\hat C|-1}{2|\hat C|-1}$, $\hat x_{ij}=1$ if $ij\in \tilde C\setminus\{1h\}$, and $\hat x_{ij}=0$ else. Clearly, we have $\hat x\in P_\ap$ and $\hat x_{ij}+\hat x_{ji}\leq 1$ for all $ij\in A_1$. Now, let $C\in\mathcal C_1$ with $|C|\geq 3$ and let $kl\in C$. If $C=\hat C$, then $\sum_{ij\in C}(\hat x_{ij}+\hat x_{ji})-\hat x_{lk}=2|C|\frac{|C|-1}{2|C|-1}-\frac{|C|-1}{2|C|-1}= |C|-1$. If $C\neq \hat C$, then $\sum_{ij\in C}(\hat x_{ij}+\hat x_{ji})-\hat x_{lk}\leq \sum_{ij\in C\cap [\hat C\cup \tilde C\cup\delta^+(1)\cup\delta^-(h)]}\hat x_{ij}\leq |C|-1$. Therefore, $\hat x\in\Cl(P_\dl(D))$. However, $\sum_{ij\in A(\hat S)}\hat x_{ij}=|\hat C|\frac{2|\hat C|-2}{2|\hat C|-1}=|\hat C|-\frac{|\hat C|}{2|\hat C|-1}>|\hat C|-1$.
\end{enumerate}
\end{proof}

For a graph with five or more nodes ($n\geq 5$), the results in Proposition~\ref{prop:comparing_closures} can be summarized in the following corollary. 

\begin{corollary}
    For $n\geq 5$, $\Cl(P_\scf(B))\subsetneq\Cl(P_\dl(D))\subsetneq\Cl(P_\dl(V_\mtz))\subsetneq\Cl(P_\mtz(D))$.
\end{corollary}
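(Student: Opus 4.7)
The proof plan is almost immediate: the corollary is a direct consequence of Proposition~\ref{prop:comparing_closures}. First, I would invoke part 1 of that proposition (which holds for all $n \geq 4$) to obtain the weak chain
\[
\Cl(P_\scf(B))\subseteq\Cl(P_\dl(D))\subseteq\Cl(P_\dl(V_\mtz))\subseteq\Cl(P_\mtz(D)).
\]
Then I would upgrade each of the three inclusions to a strict one by appealing, in order, to parts 5, 4, and 3 of the same proposition.

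Each upgrade is packaged inside a single part. Part 3 exhibits, for any cycle $\hat C \in \mathcal C_1$ with $3 \leq |\hat C|\leq n-2$, a point $\hat x \in \Cl(P_\mtz(D))$ violating an inequality that, by Proposition~\ref{dl_mtz}, belongs to the explicit description of $\Cl(P_\dl(V_\mtz))$; this gives $\Cl(P_\dl(V_\mtz))\subsetneq \Cl(P_\mtz(D))$. Part 4 produces, for any such $\hat C$ and any arc $kl \in \hat C$, a point $\hat x \in \Cl(P_\dl(V_\mtz))$ violating an inequality of $\overline P_\dl = \Cl(P_\dl(D))$ from Theorem~\ref{cl_dl}, yielding $\Cl(P_\dl(D))\subsetneq \Cl(P_\dl(V_\mtz))$. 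Part 5 performs the analogous separation between $\Cl(P_\scf(B))$ and $\Cl(P_\dl(D))$ via a set $\hat S$ with $3 \leq |\hat S| \leq n-2$ and the clique-inequality description of $\overline P_\scf$ from Theorem~\ref{cl_scf}. Chaining these three strict inclusions with the weak chain from part 1 gives the corollary.

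There is no real obstacle at the corollary level, as all combinatorial work has been absorbed into Proposition~\ref{prop:comparing_closures}. The only point worth emphasizing is the role of the hypothesis $n \geq 5$: each of the three separating constructions requires simultaneously a cycle or subset with $3 \leq |\hat C|, |\hat S| \leq n-2$ and at least two leftover nodes in $N$ on which to close an auxiliary directed cycle through node $1$, and both conditions are met precisely when $n \geq 5$. For $n = 4$ all three witnesses disappear, in agreement with part 2 of the proposition, which shows that the four closures collapse to a common set.
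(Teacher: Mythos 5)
Your proposal is correct and matches the paper's intent exactly: the corollary is stated as a direct summary of Proposition~\ref{prop:comparing_closures}, with part 1 giving the weak chain and parts 5, 4, and 3 supplying the separating points that make each inclusion strict for $n\geq 5$. Your remark on why $n\geq 5$ is needed (existence of a cycle or set of size between $3$ and $n-2$ plus leftover nodes) is also consistent with the constructions in the proposition.
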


\section{Final remarks}
In this work, we have introduced parametric formulations for the ATSP based on the MZT, DL, and SCF formulations. We showed how these formulations arise and how they compare in terms of the choice of parameters and in terms of their closures.

A natural question is how to apply these results computationally to solve instances of the ATSP. We envision a scheme where we start with the extended formulation given by a single element of a parametric family (a particular choice of the parameters), and then we enrich this formulation by including variables and constraints associated to more elements of the family. It would be interesting to study how to select the parameters in a dynamic fashion, as we might need criteria different from the notion of a most-violated constraint given a fractional solution.

Another research direction is to apply the framework of parametric formulations to other combinatorial problems whose natural or classic formulations can be generalized.


\section*{Statements and Declarations}

\subsection*{Competing interests}
The authors declare that they have no competing interests.

\subsection*{Funding}
 We would  like to thank for the support from the  ANID grant  Fondecyt \# 1210348.

\bibliographystyle{plain}
\bibliography{mtz}

\end{document}